\documentclass[12pt]{amsart}
\usepackage{}
\usepackage{amsmath}
\usepackage{amsfonts}
\usepackage{amssymb}
\usepackage[all,cmtip]{xy}           
\usepackage{xspace}

\usepackage{bbding}
\usepackage{txfonts}
\usepackage[shortlabels]{enumitem}
\usepackage{ifpdf}
\ifpdf
\usepackage[colorlinks,final,backref=page,hyperindex]{hyperref}
\else
\usepackage[colorlinks,final,backref=page,hyperindex,hypertex]{hyperref}
\fi
\usepackage{tikz}
\usepackage[active]{srcltx}

\topmargin -.8cm \textheight 22.8cm \oddsidemargin 0cm \evensidemargin -0cm \textwidth 16.3cm

\makeatletter

\newtheorem{theorem}{Theorem}[section]
\newtheorem{prop}[theorem]{Proposition}
\newtheorem{lemma}[theorem]{Lemma}
\newtheorem{coro}[theorem]{Corollary}
\newtheorem{prop-def}{Proposition-Definition}[section]

\theoremstyle{definition}
\newtheorem{defn}[theorem]{Definition}

\newtheorem{remark}[theorem]{Remark}

\newcommand{\nc}{\newcommand}


\nc{\delete}[1]{{}}
\nc{\mmargin}[1]{}

\nc{\mlabel}[1]{\label{#1}}  
\nc{\mcite}[1]{\cite{#1}}  
\nc{\mref}[1]{\ref{#1}}  
\nc{\meqref}[1]{\eqref{#1}} 
\nc{\mbibitem}[1]{\bibitem{#1}} 

\delete{
\nc{\mlabel}[1]{\label{#1}  
		{\hfill \hspace{1cm}{\bf{{\ }\hfill(#1)}}}}
\nc{\mcite}[1]{\cite{#1}{{\bf{{\ }(#1)}}}}  
\nc{\mref}[1]{\ref{#1}{{\bf{{\ }(#1)}}}}  
\nc{\meqref}[1]{\eqref{#1}{{\bf{{\ }(#1)}}}} 
\nc{\mbibitem}[1]{\bibitem[\bf #1]{#1}} 
}


\nc{\lush}{dense\xspace}

 \font\cyrs=wncyr7

\nc{\tforall}{\text{ for all }}

\newcommand{\bk}{{\mathbf{k}}}

\nc{\vep}{\varepsilon}
\nc{\bin}[2]{ (_{\stackrel{\scs{#1}}{\scs{#2}}})}  
\nc{\binc}[2]{(\!\! \begin{array}{c} \scs{#1}\\
		\scs{#2} \end{array}\!\!)}  
\nc{\bincc}[2]{  ( {\scs{#1} \atop
		\vspace{-1cm}\scs{#2}} )}  
\nc{\oline}[1]{\overline{#1}}
\nc{\mapm}[1]{\lfloor\!|{#1}|\!\rfloor}
\nc{\bs}{\bar{S}}
\nc{\la}{\longrightarrow}
\nc{\ot}{\otimes}
\nc{\rar}{\rightarrow}
\nc{\dar}{\downarrow}
\nc{\dap}[1]{\downarrow \rlap{$\scriptstyle{#1}$}}
\nc{\defeq}{\stackrel{\rm def}{=}}
\nc{\dis}[1]{\displaystyle{#1}}
\nc{\dotcup}{\ \displaystyle{\bigcup^\bullet}\ }
\nc{\hcm}{\ \hat{,}\ }
\nc{\hts}{\hat{\otimes}}
\nc{\hcirc}{\hat{\circ}}
\nc{\lleft}{[}
\nc{\lright}{]}
\nc{\curlyl}{\left \{ \begin{array}{c} {} \\ {} \end{array}
	\right .  \!\!\!\!\!\!\!}
\nc{\curlyr}{ \!\!\!\!\!\!\!
	\left . \begin{array}{c} {} \\ {} \end{array}
	\right \} }
\nc{\longmid}{\left | \begin{array}{c} {} \\ {} \end{array}
	\right . \!\!\!\!\!\!\!}
\nc{\ora}[1]{\stackrel{#1}{\rar}}
\nc{\ola}[1]{\stackrel{#1}{\la}}
\nc{\scs}[1]{\scriptstyle{#1}} \nc{\mrm}[1]{{\rm #1}}
\nc{\dirlim}{\displaystyle{\lim_{\longrightarrow}}\,}
\nc{\invlim}{\displaystyle{\lim_{\longleftarrow}}\,}
\nc{\dislim}[1]{\displaystyle{\lim_{#1}}} \nc{\colim}{\mrm{colim}}
\nc{\mvp}{\vspace{0.3cm}} \nc{\tk}{^{(k)}} \nc{\tp}{^\prime}
\nc{\ttp}{^{\prime\prime}} \nc{\svp}{\vspace{2cm}}
\nc{\vp}{\vspace{8cm}}
\nc{\modg}[1]{\!<\!\!{#1}\!\!>}
\nc{\intg}[1]{F_C(#1)}
\nc{\lmodg}{\!<\!\!}
\nc{\rmodg}{\!\!>\!}
\nc{\cpi}{\widehat{\Pi}}
\nc{\ssha}{{\mbox{\cyrs X}}} 
\nc{\tsha}{{\mbox{\cyrt X}}}
\nc{\shpr}{\diamond}    
\nc{\labs}{\mid\!}
\nc{\rabs}{\!\mid}

\nc{\ann}{\mrm{ann}}
\nc{\Aut}{\mrm{Aut}}
\nc{\br}{\mrm{bre}}
\nc{\can}{\mrm{can}}
\nc{\Cont}{\mrm{Cont}}
\nc{\rchar}{\mrm{char}}
\nc{\cok}{\mrm{coker}}
\nc{\de}{\mrm{dep}}
\nc{\dtf}{{R-{\rm tf}}}
\nc{\dtor}{{R-{\rm tor}}}

\nc{\Div}{{\mrm Div}}
\nc{\End}{\mrm{End}}
\nc{\Ext}{\mrm{Ext}}
\nc{\Fil}{\mrm{Fil}}
\nc{\Fr}{\mrm{Fr}}
\nc{\Frob}{\mrm{Frob}}
\nc{\Gal}{\mrm{Gal}}
\nc{\GL}{\mrm{GL}}
\nc{\Hom}{\mrm{Hom}}
\nc{\hsr}{\mrm{H}}
\nc{\hpol}{\mrm{HP}}
\nc{\id}{\mrm{id}}
\nc{\im}{\mrm{im}}
\nc{\Id}{\mrm{Id}}
\nc{\ID}{\mrm{ID}}
\nc{\Irr}{\mrm{Irr}}
\nc{\incl}{\mrm{incl}}
\nc{\length}{\mrm{length}}
\nc{\Lie}{\mrm{Lie}}
\nc{\mchar}{\rm char}
\nc{\mpart}{\mrm{part}}
\nc{\ql}{{\QQ_\ell}}
\nc{\qp}{{\QQ_p}}
\nc{\rank}{\mrm{rank}}
\nc{\rcot}{\mrm{cot}}
\nc{\rdef}{\mrm{def}}
\nc{\rdiv}{{\rm div}}
\nc{\rtf}{{\rm tf}}
\nc{\rtor}{{\rm tor}}
\nc{\res}{\mrm{res}}
\nc{\SL}{\mrm{SL}}
\nc{\Spec}{\mrm{Spec}}
\nc{\tor}{\mrm{tor}}
\nc{\Tr}{\mrm{Tr}}
\nc{\tr}{\mrm{tr}}


\nc{\bfk}{{\bf k}}
\nc{\bfone}{{\bf 1}}
\nc{\bfzero}{{\bf 0}}
\nc{\detail}{\marginpar{\bf More detail}
	\noindent{\bf Need more detail!}
	\svp}
\nc{\Diff}{\mathbf{Diff}}
\nc{\gap}{\marginpar{\bf Incomplete}\noindent{\bf Incomplete!!}
	\svp}
\nc{\FMod}{\mathbf{FMod}}
\nc{\Int}{\mathbf{Int}}
\nc{\Mon}{\mathbf{Mon}}
\nc{\remarks}{\noindent{\bf Remarks: }}
\nc{\Rep}{\mathbf{Rep}}
\nc{\Rings}{\mathbf{Rings}}
\nc{\Sets}{\mathbf{Sets}}
\nc{\sbu}{{\scriptstyle \bullet}}

\nc{\BA}{{\mathbb A}}   \nc{\CC}{{\mathbb C}}
\nc{\DD}{{\mathbb D}}   \nc{\EE}{{\mathbb E}}
\nc{\FF}{{\mathbb F}}   \nc{\GG}{{\mathbb G}}
\nc{\HH}{{\mathbb H}}   \nc{\LL}{{\mathbb L}}
\nc{\NN}{{\mathbb N}}   \nc{\PP}{{\mathbb P}}
\nc{\QQ}{{\mathbb Q}}   \nc{\RR}{{\mathbb R}}
\nc{\TT}{{\mathbb T}}   \nc{\VV}{{\mathbb V}}
\nc{\ZZ}{{\mathbb Z}}   \nc{\TP}{\widetilde{P}}


\nc{\cala}{{\mathcal A}}    \nc{\calb}{{\mathcal B}}
\nc{\calc}{{\mathcal C}}    \nc{\cald}{\mathcal{D}}
\nc{\cale}{{\mathcal E}}    \nc{\calf}{{\mathcal F}}
\nc{\calg}{{\mathcal G}}    \nc{\calh}{{\mathcal H}}
\nc{\cali}{{\mathcal I}}    \nc{\call}{{\mathcal L}}
\nc{\calm}{{\mathcal M}}    \nc{\caln}{{\mathcal N}}
\nc{\calo}{{\mathcal O}}    \nc{\calp}{{\mathcal P}}
\nc{\calr}{{\mathcal R}}    \nc{\cals}{{\mathcal S}}
\nc{\calt}{{\mathcal T}}    \nc{\calv}{{\mathcal V}}
\nc{\calw}{{\mathcal W}}    \nc{\calx}{{\mathcal X}}
\nc{\caly}{{\mathcal Y}}

\nc{\fraka}{{\mathfrak a}}
\nc{\frakb}{\mathfrak{b}}
\nc{\frakg}{{\frak g}}
\nc{\frakB}{{\frak B}}
\nc{\frakm}{{\frak m}}
\nc{\frakM}{{\frak M}}
\nc{\frakp}{{\frak p}}
\nc{\frakX}{{\frak X}}
\nc{\frakS}{{\frak S}}
\nc{\frakA}{{\frak A}}
\nc{\frakC}{{\frak C}}
\nc{\frakx}{{\frakx}}

\nc{\ynr}[1]{\textcolor{blue}{\underline{Yunnan:}#1 }}

\nc{\lir}[1]{\textcolor{red}{\underline{Li:}#1 }}

\begin{document}

\title[Braided dendriform algebras and braided Hopf algebras]{Braided dendriform and tridendriform algebras and braided Hopf algebras of planar trees}

\author[Li Guo]{Li Guo}
\address{Department of Mathematics and Computer Science, Rutgers University, Newark, NJ 07102, USA}
\email{liguo@rutgers.edu}

\author[Yunnan Li]{Yunnan Li}
\address{School of Mathematics and Information Science, Guangzhou University, Waihuan Road West 230, Guangzhou 510006, China}
\email{ynli@gzhu.edu.cn}

\date{\today}

\begin{abstract}
This paper introduces the braidings of dendriform algebras and tridendriform algebras. By studying free braided dendriform algebras, we obtain braidings of the Hopf algebras of Loday and Ronco of planar binary rooted trees. We also give a variation of the braiding of Foissy for the noncommutative Connes-Kreimer (a.k.a the Foissy-Holtkamp) Hopf algebra of planar rooted forests so that the well-known isomorphism between this Hopf algebra and the Loday-Ronco Hopf algebra is extended to the braided context. As free braided tridendriform algebras, we also give braided extension of the Hopf algebra of Loday and Ronco on planar rooted trees.
\end{abstract}

\subjclass[2010]{16T05, 16T25, 16W99, 05C05} 

\keywords{braided dendriform algebra, braided tridendriform algebra, planar rooted tree, planar binary tree, braided Hopf algebra}

\maketitle

\tableofcontents

\allowdisplaybreaks

\section{Introduction}
By means of free braided dendriform algebras and tridendriform algebras, this paper constructs braided extensions of the Loday-Ronco Hopf algebra~\mcite{LR1} and Foissy-Holtkamp Hopf algebra~\mcite{Fo2,Hol} (the noncommutative Connes-Kreimer Hopf algebra), in such a way that the well-known isomorphism~\mcite{Hol} between the two Hopf algebra can be extended to the braided context.

\subsection{The motivation}
The theory of braids is important in several areas of mathematics and theoretical physics, including low dimensional topology, integrable systems and quantum field theory~\mcite{KRT,KT,Wa,YG,Oe}. In particular, braiding is intimately related to the process of quantization in both mathematics and physics, such as quantum groups~\mcite{Ka} and braid statistics in quantum mechanics~\mcite{FG}.

The Hopf algebra of rooted trees was introduced as a toy model encoding the combinatorics of Feynman graphs in the Connes-Kreimer approach of renormalization in quantum field theory~\mcite{CK}. As is well-known, the noncommutative variation of the Connes-Kreimer Hopf algebra, also known as the Foissy-Holtkamp Hopf algebra~\mcite{Fo1,Fo2,Hol}, of planar rooted trees is canonically isomorphic to another important Hopf algebra, that of Loday and Ronco on planar binary rooted trees~\mcite{LR1}. Taking the braiding approach, Foissy~\mcite{Fo3,Fo4} recently provided a quantization of the Hopf algebra of planar rooted trees. Thus it would be interesting to apply a braiding to quantize the Loday-Ronco Hopf algebra, in such as way that allows the extension of the above canonical isomorphism to the braided context. This is the motivation of our paper.

\subsection{The approach}
Our approach of braiding of the Loday-Ronco Hopf algebra starts with the observation that this Hopf algebra is the free object in the category of dendriform algebras~\mcite{Lo1}. Thus we first introduce the notion of braided dendriform algebras by providing suitable braidings on dendriform algebras. By taking the preorder of the vertices of the planar binary rooted trees, we construct the free object on the set of planar binary rooted trees decorated by a braided space, which then can be equipped with a braided Hopf algebra structure, in analogous to the Loday-Ronco Hopf algebra. We also apply a natural order of vertices to the planar rooted trees used in the Foissy-Holtkamp Hopf algebra, giving rise to a braided Hopf algebra structure on the space of planar rooted trees whose vertices are decorated by a braided space. This braiding of the Foissy-Holtkamp Hopf algebra is different from the one obtained by Foissy~\mcite{Fo1}. However, it has the benefit that the aforementioned isomorphism between the Foissy-Holtkamp Hopf algebra and the Loday-Ronco Hopf algebra can be extended to the braided context.

A second benefit of the preorder of vertices is that, for planar binary trees, it has a natural interpretation as an order of the {\it angles} of the planar binary trees which can then be extended to an order of the angles in any planar trees. This is important since angularly decorated planar rooted trees is the natural carrier of another Hopf algebra of Loday and Ronco, as the free tridendriform algebra, as well as of the free Rota-Baxter algebras~\mcite{EG2,Gu1,LR3}. Thus by extending the notions of tridendriform algebra and free tridendriform algebra to the braided context, we are also able to enrich the Loday-Ronco Hopf algebra on planar angularly decorated rooted trees to the braided context. Noting the recent work on rooted trees with decorations on edges arising from renormalization of quantum field theory~\mcite{Fo5}, we see that combining different decorations (on vertices, angles and edges) of rooted trees can reveal interesting connections on their combinatorial and algebraic structures.

\subsection{The outline}

The main body of this paper consists of three sections. In Section~\mref{sec:dend}, we continue the study of braided dendriform algebras introduced in~\mcite{GL} in connection with braided Rota-Baxter algebras, but turn our attention to the noncommutative case. In particular, the well-known Loday-Ronco~\mcite{LR1} Hopf algebra of planar binary rooted trees as the free dendriform algebra is enriched to the braided case.

In Section~\mref{sec:comp}, by a suitable ordering of the vertices of planar rooted trees, a braided enrichment is established for the Foissy-Holtkamp Hopf algebra of planar rooted trees~\mcite{Fo3,Fo4,Hol} as the noncommutative variation of the Connes-Kreimer Hopf algebra of (nonplanar) rooted trees. This ordering enables us to extend the well-known isomorphism of the Loday-Ronco Hopf algebra with the Foissy-Holtkamp Hopf algebra of planar rooted trees.

In Section~\mref{sec:trid}, we introduce the notion of a braided tridendriform algebra.
With the Loday-Ronco Hopf algebra of planar rooted trees in mind, we focus on the free objects. More precisely, we show that the Loday-Ronco Hopf algebra on planar angularly decorated rooted trees can be equipped with a braided structure and give free braided tridendriform algebras, generated by both a set and by a braided algebra. In the latter case, a subset of the braided planar (angularly decorated) rooted trees, called \lush trees are needed, possessing interesting combinatorial properties.

\medskip

\noindent
{\bf Notations.}

In this paper, we fix a ground field $\bk$ of characteristic 0. All the objects
under discussion, including vector spaces, algebras and tensor products, are taken over $\bk$.

\section{Free braided dendriform algebras and braided Loday-Ronco Hopf algebras}
\mlabel{sec:dend}

After recalling the notion of braided dendriform algebras, we construct free braided dendriform algebras on planar binary rooted trees.

\subsection{Braided dendriform algebras}

We first briefly recall the notions of braided vector spaces and algebras, while refer the reader to the literatures such as~\mcite{Ka,KT} for details.

\begin{defn}
A {\bf braiding} or {\bf Yang-Baxter operator} on a vector space $V$ is a linear
map $\sigma$ in $\mathrm{End}(V\otimes V)$ satisfying the following {\bf braid relation} on $V^{\otimes 3}$:
\begin{equation}
(\sigma\otimes
\id_{V})(\id_{V}\otimes \sigma)(\sigma\otimes
\id_{V})=(\id_{V}\otimes \sigma)(\sigma\otimes
\id_{V})(\id_{V}\otimes \sigma),
\mlabel{eq:qybe}
\end{equation}
which up to a flip is the {\bf Yang-Baxter equation} without spectral parameter.

A {\bf braided vector space}, denoted $(V,\sigma)$, is a vector space $V$ equipped with a braiding $\sigma$. For any $n\in \mathbb{N}$ and $1\leq i\leq
n-1$, we denote by $\sigma_i$ the operator $\id_V^{\otimes
	(i-1)}\otimes \sigma\otimes \id_V^{\otimes(n-i-1)}\in
\mathrm{End}(V^{\otimes n})$.

For a braided
vector space $(V,\sigma)$, a subspace $W$ of $V$ is called a {\bf braided subspace} of $(V,\sigma)$, if
\[\sigma(W\otimes V+V\otimes W)\subseteq W\otimes V+V\otimes W.\]
In this case, the quotient space $V/W$ has an induced braided vector space structure with its braiding, still denoted by $\sigma$.
\end{defn}

Denote $\mathfrak{S}_{n}$ the $n$-th symmetric group and $B_n$ the $n$-th Artin braid group; see \cite[Definition 1.1]{KT}.
For any $w\in \mathfrak{S}_{n}$, there is the corresponding lift of $w$ in $B_n$,
denoted by $T^\sigma_w$ and defined as follows: if $w=s_{i_1}\cdots s_{i_l}$ is
any reduced expression of $w$ by transpositions $s_{i_1},\cdots,s_{i_l}$, then $T^\sigma_w:=\sigma_{i_1}\cdots
\sigma_{i_l}$, uniquely determined by $\sigma$ and $w$. When $\sigma$ is the usual flip $\tau$, it reduces to the permutation action of $\mathfrak{S}_{n}$ on $V^{\otimes n}$, namely,
\[T^\tau_w(v_1\otimes\cdots\otimes v_n)=v_{w^{-1}(1)}\otimes\cdots\otimes v_{w^{-1}(n)}.\]

Using $\underline{\otimes}$ to denote the tensor product between $T(V)$ and itself to distinguish with the usual $\otimes$ within $T(V)$, we define $\beta:T(V)\underline{\otimes} T(V)\rightarrow
T(V)\underline{\otimes} T(V)$ by requiring that, for $i,j\geq 1$,  the restriction $\beta_{ij}$ of
$\beta$ to $V^{\otimes i}\underline{\otimes} V^{\otimes j}$ is $T^\sigma_{\chi_{ij}}$, where
\[\chi_{ij}:=\left(\begin{array}{cccccccc}
1&2&\cdots&i&i+1&i+2&\cdots & i+j\\
j+1&j+2&\cdots&j+i&1& 2 &\cdots & j
\end{array}\right)\in \mathfrak{S}_{i+j}.\]
By convention, $\beta_{0i}$ and $\beta_{i0}$ is the identity map on
$V^{\otimes i}$. Then $\beta$ is a braiding on $T(V)$. It is easy to verify the following equalities of $\beta$ on $T(V)$:
\begin{equation}\mlabel{beta}
\beta_{m+n,k}=(\beta_{mk}\otimes\Id_V^{\otimes n})(\Id_V^{\otimes m}\otimes\beta_{nk}),\quad
\beta_{m,n+k}=(\Id_V^{\otimes n}\otimes\beta_{mk})(\beta_{mn}\otimes\Id_V^{\otimes k}),\,m,n,k\geq0,
\end{equation}
which will be used throughout the paper. For the convenience of constructions below, we also use {\bf (un)shuffles of permutations},
$$\frakS_{i,j}:=\left\{w\in\frakS_{i+j}\,\bigg|\,{w(1)<\cdots<w(i)\atop w(i+1)<\cdots<w(i+j)}\right\}.$$
Let $\frakS_{i,\,0}:=\frakS_{0,\,i}:=\{(1)\}$ by convention.

\begin{defn}
	Let $A$ be an algebra with product $\mu$, and $\sigma$ be a braiding on $A$. We call the triple $(A,\mu,\sigma)$ a {\bf braided algebra} if it satisfies the conditions
	\begin{equation}\mlabel{ba1}
	(\mu\otimes\Id_A)\sigma_2\sigma_1=\sigma(\Id_A\otimes\mu),\quad
	(\Id_A\otimes\mu)\sigma_1\sigma_2=\sigma(\mu\otimes
	\Id_A).
	\end{equation}
	Moreover, if $A$ is unital with unit $1_A$ and satisfies
	\begin{equation}\mlabel{ba2}
	\sigma(a\otimes 1_A)=1_A\otimes a,\quad
	\sigma(1_A\otimes a)=a\otimes1_A \quad \text{for all } a\in A, \end{equation}
	then $A$ is called a {\bf unital braided algebra}.
\end{defn}	
	For braided algebras $(A,\sigma)$ and $(A',\sigma')$, a map $f:A\rightarrow A'$ is called a {\bf homomorphism of braided algebras} if
	$f$ is a homomorphism of algebras and $(f\otimes f)\sigma=\sigma'(f\otimes f)$.
	If $A$ and $A'$ are unital, then it also requires $f(1_A)=1_{A'}$.

\medskip
With motivation from periodicity in $K$-theory and making use of various categorical and operadic transitions, Loday~\mcite{Lo1} introduced the notion of a dendriform algebra.
\begin{defn}
A {\bf dendriform algebra} is a triple $(D,\prec,\succ)$ consisting of a $\bk$-module $D$ and binary operations $\prec,\succ$ on $D$ satisfying the relations
	\begin{align}
	&\mlabel{prec}(x\prec y)\prec z=x\prec(y\prec z+y\succ z),\\
	&\mlabel{ps}(x\succ y)\prec z= x\succ (y \prec z),\\
	&\mlabel{succ}(x\prec y+x\succ y)\succ z=x\succ (y\succ z) \quad \tforall x,y,z\in D.
	\end{align}
\end{defn}
Then $*:=\prec+\succ$ is associative.

One can also define the {\bf augmentation} $D^+:=\bk\oplus D$ of $D$, with $\prec$ and $\succ$ satisfying in addition
$$	1\prec x=0,\,x\prec 1=x,\,
	1\succ x=x,\,x\succ 1=0 \quad \tforall x\in D.
$$
The product $*$ is extended to $D^+$ with unit 1, while $1\prec1,\,1\succ1$ are undefined.
		
Further, a submodule $I$ of $D$ is called a {\bf dendriform ideal} if
\[D \prec I\subseteq I,\,I \prec D\subseteq I,\,D \succ I\subseteq I,\,I \succ D\subseteq I.\]
	Then the quotient module $D/I$ becomes a dendriform algebra with $\prec,\succ$ modulo $I$.
For dendriform algebras $(D,\prec,\succ)$ and $(D',\prec',\succ')$, a map $f:D\rightarrow D'$ is called a {\bf homomorphism of dendriform algebras} if
	$f$ is a linear map such that $f\prec=\prec'(f\otimes f)$ and $f\succ=\succ'(f\otimes f)$.

Combining the notions of braided algebra and dendriform algebra, we obtain the braided analogue of dendriform algebras~\mcite{GL}.
\begin{defn}
	A quadruple $(D,\prec,\succ,\sigma)$ is called a {\bf braided dendriform algebra} if $(D,\sigma)$ is a braided vector space and $(D,\prec,\succ)$ is a dendriform algebra, with the compatibility conditions
	\begin{align}\mlabel{bda1}
 \sigma(\Id_D\otimes\prec)=(\prec\otimes\Id_D)\sigma_2\sigma_1,\quad
	\sigma(\prec\otimes\Id_D)=(\Id_D\otimes\prec)\sigma_1\sigma_2,\\
	\mlabel{bda2}
	 \sigma(\Id_D\otimes\succ)=(\succ\otimes\Id_D)\sigma_2\sigma_1,\quad
	\sigma(\succ\otimes\Id_D)=(\Id_D\otimes\succ)\sigma_1\sigma_2.
	\end{align}
\end{defn}

Let $*:=\prec+\succ$. Then $(D,*,\sigma)$ is a braided algebra. Furthermore, $(D^+,*,\sigma)$ is a unital braided algebra with the braiding $\sigma$ of $D$ extended by
	\[\sigma(1\otimes x)=x\otimes 1,\,\sigma(x\otimes 1)=1\otimes x,\,x\in D^+.\]
	
Moreover, a dendriform ideal $I$ of $D$ is called a {\bf braided dendriform ideal}, if it is also a braided subspace of $D$. Then the quotient dendriform algebra $D/I$ is also a braided dendriform algebra.
For braided dendriform algebras $(D,\prec,\succ,\sigma)$ and $(D',\prec',\succ',\sigma')$, a map $f:D\rightarrow D'$ is called a {\bf homomorphism of braided dendriform algebras}, if $f$ is a homomorphism of dendriform algebras and $(f\otimes f)\sigma=\sigma'(f\otimes f)$.
	
\subsection{Free braided dendriform algebras of planar binary trees}
\mlabel{ss:freed}

We next construct free braided dendriform algebras by equipping a suitable braiding to the free dendriform algebras of planar binary trees obtained by Loday and Ronco~\mcite{LR1,Lo1} whose construction we now recall. See Section~\mref{ss:hopfd} for the Hopf algebra structure.
Let $\calb(X)$ be the set of planar binary trees with internal vertices decorated by elements in a set $X$. Some example are
\[|\,,\,\raisebox{-.1mm}{\xy 0;/r.3pc/:
	(1.5,4)*{};(0,2)*{}**\dir{-};
	(-1.5,4)*{};(0,2)*{}**\dir{-};
	(0,0)*{};(0,2)*{}**\dir{-};(1.5,1.5)*{\scriptstyle x};
	\endxy}\,,\,\raisebox{-.1mm}{\xy 0;/r.3pc/:
	(3,6)*{};(0,2)*{}**\dir{-};
	(-3,6)*{};(0,2)*{}**\dir{-};
	(-1.5,4)*{};(0,6)*{}**\dir{-};
	(0,0)*{};(0,2)*{}**\dir{-};
	(1.5,1.5)*{\scriptstyle y};
	(-3,3.5)*{\scriptstyle x};
	\endxy}\,,\,\raisebox{-.1mm}{\xy 0;/r.3pc/:
	(3,6)*{};(0,2)*{}**\dir{-};
	(-3,6)*{};(0,2)*{}**\dir{-};
	(1.5,4)*{};(0,6)*{}**\dir{-};
	(0,0)*{};(0,2)*{}**\dir{-};
	(1.5,1.5)*{\scriptstyle x};
	(3,3.5)*{\scriptstyle y};
	\endxy}\,,\,\raisebox{-.1mm}{\xy 0;/r.3pc/:
	(3,6)*{};(0,2)*{}**\dir{-};
	(-3,6)*{};(0,2)*{}**\dir{-};
	(1.5,4)*{};(.5,6)*{}**\dir{-};
	(-1.5,4)*{};(-.5,6)*{}**\dir{-};
	(0,0)*{};(0,2)*{}**\dir{-};
	(1.5,1.5)*{\scriptstyle y};
	(-3,3.5)*{\scriptstyle x};
	(3,3.5)*{\scriptstyle z};
	\endxy}\,,\dots\]
for $x,y,z\in X$. When $X$ is a singleton, we abbreviate $\calb(X)$ as $\calb$, for planar binary trees without decorations. Define the {\bf (vertex) degree} $d_v(Y)$ of any $Y\in \calb$ to be the number of its internal vertices and, for $n\geq 0$, let $\calb_n$ be the subset of $\calb$ consisting of planar binary trees with $n$ internal vertices. It is well-known that $|\calb_n|$ is the $n$-th Catalan number $C_n$.

To obtain a braided structure, we fix the following natural well-order on the set of internal vertices of any $Y\in\calb_n,\,n\geq1$. Label the $n+1$ leaves in $Y$ from left to right by $0,1,\dots,n$, and then let the internal vertex located between the $i$-th and $(i+1)$-th leaves be the $(i+1)$-th vertex for $0\leq i\leq n-1$. We call this the {\bf canonical order} on the internal vertices of $Y$. Let $Y(x_1,\dots,x_n)\in\calb_n(X)$ denote the binary tree $Y$ with its canonically ordered internal vertices decorated by $x_1,\dots,x_n\in X$ from  left to right. This way we have the identification
\begin{equation}
\calb_n (X)= \calb_n\times X^n,\, Y(x_1,\cdots,x_n) \leftrightarrow (Y,(x_1,\cdots,x_n)), \quad Y\in \calb_n, x_1,\cdots, x_n\in X,
\mlabel{eq:yset}
\end{equation}
with which we can write
$\bfk \calb(X)=\bigoplus_{n\geq 1} \bfk (B_n\times X^n)$.
\begin{defn}
Given a vector space $V$, define
\[\caly(V):=\bigoplus_{n\geq0}\caly_n(V):=\bigoplus_{n\geq0}\bk\calb_n\otimes V^{\otimes n}.\]
In particular, $\caly_0(V)=\bfk\,|$ by convention. Denote $\oline{\caly(V)}:=\bigoplus_{n\geq1}\caly_n(V)$ as a subspace of $\caly(V)$.
\end{defn}
In analogous to Eq.~\meqref{eq:yset}, we can identify a pure tensor $Y\otimes (v_1\ot \cdots \ot v_n)$, where $Y\in \calb_n$ and $v_1\ot \cdots \ot v_n\in V^{\ot n}$, with the decorated tree $Y(v_1,\cdots,v_n)$ in which the binary tree $Y$ has its canonically ordered internal vertices decorated by $v_1,\cdots, v_n$ from left to right, allowing $\bfk$-linearity in each decoration and hence multi-linearity in all the decorations.
Also, denote
\[\bar{\calb}_n(V):=\{Y(v_1,\dots,v_n)\in\caly(V)\,|\,Y\in\calb_n,v_1,\dots,v_n\in V\},\,n\geq0,\]
and $\bar{\calb}(V):=\sqcup_{n\geq0}\bar{\calb}_n(V)$.
Define the ($\bk$-linear) grafting operation on $\caly(V)$,
\[\vee:\caly(V)\otimes V\otimes \caly(V)\rar\caly(V),\]
such that $\vee(Y\otimes v\otimes Y')$ is obtained by using $\raisebox{-.1mm}{\xy 0;/r.3pc/:
	(1.5,4)*{};(0,2)*{}**\dir{-};
	(-1.5,4)*{};(0,2)*{}**\dir{-};
	(0,0)*{};(0,2)*{}**\dir{-};(1.5,1.5)*{\scriptstyle v};
	\endxy}$
to graft two binary trees $Y$ and $Y'$ into one, and also abbreviated as $Y\vee_v Y'$.
Then define binary operations $\prec$ and $\succ$ on $\caly(V)$ by the recursion
\begin{align}
&Y\prec |:=Y,\quad |\prec Y:=0,\quad Y\succ |:=0,\quad |\succ Y:=Y,
\mlabel{eq:dendi}\\
&Y\prec Y':=Y_1\vee_u(Y_2*Y'), \quad Y\succ Y':=(Y*Y_1')\vee_v Y_2',
\mlabel{eq:dendii}
\end{align}
for any $Y=Y_1\vee_u Y_2$ and $Y'=Y_1'\vee_v Y_2'$ in $\bar{\calb}(V)$, where $*_\sigma:=\prec_\sigma + \succ_\sigma$ on $\oline{\caly(V)}$, and is extended to $\caly(V)$ with $|$\, being the unit.
By~\cite[Theorem 5.8]{Lo1}, the space $\oline{\caly(V)}$ with operations $\prec,\succ$ defined in Eq.~\meqref{eq:dendii} is the free dendriform algebra on $V$. Thus $(\caly(V),*)$ is an associative algebra.

\smallskip
Now suppose that $(V,\sigma)$ is moreover a braided vector space,
the braiding $\sigma$ on $V$ induces a braiding $\sigma_{BT}$ on $\caly(V)$ defined by
\begin{align*}
&\sigma_{BT}(Y(v_1,\dots,v_m)\otimes|\,)=|\otimes Y(v_1,\dots,v_m),\,
\sigma_{BT}(\,|\otimes Y(v_1,\dots,v_m))=Y(v_1,\dots,v_m)\otimes|\,,\\
&\sigma_{BT}(Y(v_1,\dots,v_m)\otimes Y'(v_{m+1},\dots,v_{m+n}))=(Y'\otimes Y)\beta_{mn}(v_1\otimes\cdots\otimes v_{m+n}),
\end{align*}
for any $Y\in\calb_m,Y'\in\calb_n$ and $v_1,\dots,v_{m+n}\in V$. Also, we define the $\bk$-linear map
\[\sigma_{V,\caly(V)}:V\otimes\caly(V)\rar \caly(V)\otimes V,\,v\otimes Y(v_1,\dots,v_m)\mapsto (Y\otimes\Id_V)\beta_{1,\,m}(v\otimes v_1\otimes\cdots\otimes v_m)\]
for any $Y\in\calb_m$ and $v,v_1,\dots,v_m\in V$. By symmetry, we define $\sigma_{\caly(V),V}:\caly(V)\otimes V\rar V\otimes\caly(V)$.
Then by the definitions of operator $\vee$ and braidings $\sigma_{BT},\,\sigma_{V,\caly(V)}$ and $\sigma_{\caly(V),V}$, we have
\begin{lemma}
For any $v\in V$ and homogeneous $Y,Y',Y''\in\bar{\calb}(V)$, we have
\begin{align}
&\mlabel{tv1}\sigma_{BT}((Y\vee_v Y')\otimes Y'')=(\Id_{\caly(V)}\otimes\vee)(\sigma_{BT})_1(\sigma_{V,\caly(V)})_2(\sigma_{BT})_3(Y\otimes v\otimes Y'\otimes Y''),\\
&\mlabel{tv2}\sigma_{BT}(Y\otimes(Y'\vee_v Y''))=(\vee\otimes\Id_{\caly(V)})(\sigma_{BT})_3(\sigma_{\caly(V),V})_2(\sigma_{BT})_1(Y\otimes Y'\otimes v\otimes Y'').
\end{align}	
\end{lemma}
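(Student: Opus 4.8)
The plan is to prove both identities by unwinding them directly from the defining formulas for $\sigma_{BT}$, $\sigma_{V,\caly(V)}$, $\sigma_{\caly(V),V}$ and $\vee$, keeping separate track of the two pieces of data carried by an element of $\caly(V)$: the underlying planar binary tree, which under these braidings only gets transposed, and the decoration tensor in $V^{\otimes\bullet}$, which gets acted on by the operators $\beta_{ij}$. By multilinearity in the decorations it is enough to treat pure tensors, so I fix $Y\in\calb_m$, $Y'\in\calb_n$, $Y''\in\calb_k$ with decoration tuples $(u_1,\dots,u_m)$, $(w_1,\dots,w_n)$, $(t_1,\dots,t_k)$. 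The place where the canonical vertex order enters is the remark that the grafted root vertex of $Y\vee_v Y'$ falls between the vertices of $Y$ and those of $Y'$, so $Y\vee_v Y'\in\calb_{m+n+1}$ carries the tuple $(u_1,\dots,u_m,v,w_1,\dots,w_n)$, and likewise $Y'\vee_v Y''\in\calb_{n+1+k}$ carries $(w_1,\dots,w_n,v,t_1,\dots,t_k)$.

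For \meqref{tv1}, the formula for $\sigma_{BT}$ on $\caly_{m+n+1}(V)\otimes\caly_k(V)$ presents the left side as the tree tensor $Y''\otimes(Y\vee_v Y')$ decorated by $\beta_{m+n+1,\,k}(u_1\otimes\cdots\otimes u_m\otimes v\otimes w_1\otimes\cdots\otimes w_n\otimes t_1\otimes\cdots\otimes t_k)$, the first $k$ output slots landing on $Y''$ and the remaining $m+n+1$ on $Y\vee_v Y'$. For the right side I would push $Y\otimes v\otimes Y'\otimes Y''$ through the four maps, reading the composite from right to left: $(\sigma_{BT})_3$ transposes $Y'$ past $Y''$, braiding their decorations by $\beta_{nk}$; $(\sigma_{V,\caly(V)})_2$ then moves $v$ past the image of $Y''$, braiding by $\beta_{1k}$; $(\sigma_{BT})_1$ then moves $Y$ past $Y''$, braiding by $\beta_{mk}$; and finally $\Id_{\caly(V)}\otimes\vee$ re-grafts $Y$, $v$, $Y'$ into $Y\vee_v Y'$, which on decorations is just concatenation of the three blocks. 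Both sides then have underlying tree tensor $Y''\otimes(Y\vee_v Y')$, so it remains to see that the two induced endomorphisms of $V^{\otimes(m+n+1+k)}$ coincide, i.e.\ that
\[\beta_{m+n+1,\,k}=\bigl(\beta_{mk}\otimes\Id_V^{\otimes (1+n)}\bigr)\bigl(\Id_V^{\otimes m}\otimes\beta_{1k}\otimes\Id_V^{\otimes n}\bigr)\bigl(\Id_V^{\otimes (m+1)}\otimes\beta_{nk}\bigr).\]
This is precisely \meqref{beta} applied twice, writing $m+n+1=m+1+n$: first $\beta_{m+n+1,\,k}=(\beta_{m+1,\,k}\otimes\Id_V^{\otimes n})(\Id_V^{\otimes (m+1)}\otimes\beta_{nk})$, then $\beta_{m+1,\,k}=(\beta_{mk}\otimes\Id_V)(\Id_V^{\otimes m}\otimes\beta_{1k})$; substituting completes \meqref{tv1}.

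The identity \meqref{tv2} is the mirror image: its left side is the tree tensor $(Y'\vee_v Y'')\otimes Y$ decorated by $\beta_{m,\,n+1+k}(u_1\otimes\cdots\otimes u_m\otimes w_1\otimes\cdots\otimes w_n\otimes v\otimes t_1\otimes\cdots\otimes t_k)$, while unwinding $(\vee\otimes\Id_{\caly(V)})(\sigma_{BT})_3(\sigma_{\caly(V),V})_2(\sigma_{BT})_1$ on $Y\otimes Y'\otimes v\otimes Y''$ moves the $Y$-block past $Y'$ (by $\beta_{mn}$), then past $v$ (by $\beta_{m1}$), then past $Y''$ (by $\beta_{mk}$), leaving the decoration tensor transformed by $(\Id_V^{\otimes (n+1)}\otimes\beta_{mk})(\Id_V^{\otimes n}\otimes\beta_{m1}\otimes\Id_V^{\otimes k})(\beta_{mn}\otimes\Id_V^{\otimes (1+k)})$; this equals $\beta_{m,\,n+1+k}$ by the second identity in \meqref{beta} applied twice (via $\beta_{m,\,n+1+k}=(\Id_V^{\otimes (n+1)}\otimes\beta_{mk})(\beta_{m,\,n+1}\otimes\Id_V^{\otimes k})$ and $\beta_{m,\,n+1}=(\Id_V^{\otimes n}\otimes\beta_{m1})(\beta_{mn}\otimes\Id_V)$).

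I expect the only real work to be the index bookkeeping at the intermediate steps — tracking at each stage which block of $V^{\otimes(m+n+1+k)}$ sits under which tree and confirming that the next $\beta$-operator acts on the advertised slots — but no conceptual obstacle arises, since every map in play is assembled from the reduced lifts $T^\sigma_w$ and \meqref{beta} records exactly how these lifts compose under the relevant block moves. One may also observe that \meqref{tv1} and \meqref{tv2} are exchanged by the obvious left--right reflection of planar trees, so it is enough to carry out one of them carefully and translate conventions for the other.
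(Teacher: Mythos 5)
Your proposal is correct and is exactly the verification the paper has in mind: the paper states the lemma with no written proof, asserting it follows ``by the definitions of the operator $\vee$ and the braidings,'' and your unwinding of both sides down to the identity $\beta_{m+n+1,\,k}=(\beta_{mk}\otimes\Id_V^{\otimes(1+n)})(\Id_V^{\otimes m}\otimes\beta_{1k}\otimes\Id_V^{\otimes n})(\Id_V^{\otimes(m+1)}\otimes\beta_{nk})$ (and its mirror) via two applications of Eq.~\meqref{beta} is precisely that omitted computation, including the key observation that the grafted root vertex occupies position $m+1$ in the canonical order.
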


To obtain a braided dendriform algebra structure on $\caly(V)$ for a braided vector space $(V,\sigma)$, we rewrite $\prec,\succ,*$ on $\caly(V)$ given in \meqref{eq:dendi}--\meqref{eq:dendii} with enriched notations $\prec_\sigma,\succ_\sigma,*_\sigma$, and show that it is compatible with the braiding structure induced from $(V,\sigma)$.
\begin{theorem}
Given a braided vector space $(V,\sigma)$,
the quadruple $(\oline{\caly(V)},\prec_\sigma,\succ_\sigma,\sigma_{BT})$ is a braided dendriform algebra.
\mlabel{bdbt}
\end{theorem}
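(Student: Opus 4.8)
The plan is to check the three requirements in the definition of a braided dendriform algebra. That $(\overline{\caly(V)},\prec_\sigma,\succ_\sigma)$ is a dendriform algebra is exactly \cite[Theorem 5.8]{Lo1} as recalled above, so only the braiding axioms remain. To see that $(\overline{\caly(V)},\sigma_{BT})$ is a braided vector space, I would unravel the definition: on $\caly_m(V)\ot\caly_n(V)$ the map $\sigma_{BT}$ simultaneously transposes the two tree-shape labels and applies the braiding $\beta$ of $T(V)$ to the two blocks of $V$-tensor factors. Hence the braid relation \eqref{eq:qybe} for $\sigma_{BT}$ on $\caly_m(V)\ot\caly_n(V)\ot\caly_p(V)$ splits into two independent checks: the tree-shape labels on both sides undergo the same permutation $s_1s_2s_1=s_2s_1s_2$ of the three labelled slots, while on the $V$-tensor factors it reduces to $(\beta_{np}\ot\Id)(\Id\ot\beta_{mp})(\beta_{mn}\ot\Id)=(\Id\ot\beta_{mn})(\beta_{mp}\ot\Id)(\Id\ot\beta_{np})$ on $V^{\ot m}\ot V^{\ot n}\ot V^{\ot p}$, which is the braid relation for $\beta$ on $T(V)^{\ot 3}$ — already noted to hold, and in any case a formal consequence of \eqref{beta} and \eqref{eq:qybe} for $\sigma$. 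Since $\sigma_{BT}$ preserves positive vertex degree, it restricts to a braiding on $\overline{\caly(V)}$.

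The substance of the theorem is the four compatibility identities \eqref{bda1}--\eqref{bda2}. By multilinearity it suffices to verify them on homogeneous decorated trees, and I would do so simultaneously by induction on the total vertex degree $d_v(Y)+d_v(Y')+d_v(Y'')$ of the arguments $Y,Y',Y''\in\bar{\calb}(V)$. In the base cases at least one of the three trees is the trivial tree $|$, so \eqref{eq:dendi} collapses the relevant $\prec_\sigma$ or $\succ_\sigma$ to $0$, to an identity map, or to $*_\sigma$, and both sides of each identity are then read off directly from the action of $\sigma_{BT}$ on $|$ together with the convention $\beta_{0i}=\beta_{i0}=\Id$. For the inductive step, assume all three trees have positive degree. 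To verify, say, the first identity of \eqref{bda1}, I would write $Y'=Y_1'\vee_u Y_2'$ so that $Y'\prec_\sigma Y''=Y_1'\vee_u(Y_2'*_\sigma Y'')$ by \eqref{eq:dendii}, push $\sigma_{BT}$ through the grafting using \eqref{tv2}, then expand $Y_2'*_\sigma Y''=Y_2'\prec_\sigma Y''+Y_2'\succ_\sigma Y''$ inside the result and apply the inductive hypothesis for both $\prec_\sigma$ and $\succ_\sigma$ — legitimate since $d_v(Y_2')<d_v(Y')$ forces the total degree to drop strictly — and finally reassemble the pieces, using \eqref{beta}, into $(\prec_\sigma\ot\Id_{\caly(V)})(\sigma_{BT})_2(\sigma_{BT})_1(Y\ot Y'\ot Y'')$. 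The second identity of \eqref{bda1} and both identities of \eqref{bda2} are handled symmetrically, invoking \eqref{tv1} when the dendriform product sits in the first two tensor slots and the recursion $Y\succ_\sigma Y'=(Y*_\sigma Y_1')\vee_v Y_2'$ where needed; in every case the recursion strips off a strictly smaller subtree, so the induction closes. Note that the braided-algebra compatibility of $*_\sigma$ with $\sigma_{BT}$ is never assumed separately: it emerges as the sum of the $\prec_\sigma$ and $\succ_\sigma$ identities, and this is exactly what makes the inductive step go through.

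The one genuinely delicate point is the bookkeeping in the reassembly step: through the nested applications of \eqref{tv1} and \eqref{tv2} one must keep precise track of which blocks of $V$-tensor factors each of $\beta_{ij}$, $\sigma_{V,\caly(V)}$ and $\sigma_{\caly(V),V}$ acts on, and match the result against the desired right-hand side. Here the two halves of \eqref{beta} do all the work: since the decoration sequence of a dendriform product $Y_2'*_\sigma Y''$ is the concatenation of those of $Y_2'$ and $Y''$, braiding $Y$ past $Y_2'*_\sigma Y''$ decomposes as braiding $Y$ past $Y_2'$ and then past $Y''$, which is precisely the content of \eqref{beta}. No ingredient beyond the Lemma, the braid relations for $\beta$, and the dendriform recursion is required, and the remaining effort is a careful but routine computation. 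I would also be sure to check the degenerate sub-cases inside the induction — for instance $Y_1'=|$ or $Y_2'=|$ — where the grafting still creates a genuine internal vertex, so that the total vertex degree does decrease strictly as claimed.
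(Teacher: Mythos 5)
Your proposal is correct and follows essentially the same route as the paper's proof: induction on vertex degree, splitting off the case where a tree is $|$, using the grafting recursion \meqref{eq:dendii} together with Eqs.~\meqref{tv1}--\meqref{tv2} and \meqref{beta}, and closing the induction by feeding back the $*_\sigma$-compatibility as the sum of the $\prec_\sigma$ and $\succ_\sigma$ identities. The only addition is your explicit verification that $\sigma_{BT}$ satisfies the braid relation, which the paper asserts without proof when defining $\sigma_{BT}$; your reduction to the braid relation for $\beta$ on $T(V)$ is the right argument for that.
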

\begin{proof} 	
First fix $Y(v_1,\dots,v_m)\in\bar{\calb}_m(V)$, $Y'(v_{m+1},\dots,v_{m+n})\in\bar{\calb}_n(V)$ and $Y''(v_{m+n+1},\dots,v_{m+n+l})\in\bar{\calb}_l(V)$ with $m,n,l\geq0$.
As noted above, $(\oline{\caly(V)},\prec_\sigma,\succ_\sigma)$ is the dendriform algebra in~\cite[Theorem 5.8]{Lo1}.
So it remains to check the compatibility conditions in Eqs.~\meqref{bda1} and \meqref{bda2} together with Eqs.~\meqref{ba1} and \meqref{ba2} for $\caly(V)$, for which we apply induction on the vertex degree $d_v$. If one of $Y,Y',Y''$ is $|$\,, then the verification is clear. If not, suppose $m, n, l\geq1$, and $Y=Y_1\vee_{v_i}Y_2$ with $1\leq i\leq m$, then
\[\begin{split}
\sigma&_{BT}((Y\prec_\sigma Y')\otimes Y'')(v_1\otimes\cdots\otimes v_{m+n+l})
=\sigma_{BT}((Y_1\vee_{v_i}(Y_2*_\sigma Y'))\otimes Y'')\\
&=(\Id_{\caly(V)}\otimes\vee)(\sigma_{BT})_1(\sigma_{V,\caly(V)})_2(\sigma_{BT})_3(Y_1\otimes v_i\otimes (Y_2*_\sigma Y') \otimes Y'')\\
&=(\Id_{\caly(V)}\otimes\vee(\Id_{\caly(V)}\otimes\Id_V\otimes*_\sigma))(\sigma_{BT})_1(\sigma_{V,\caly(V)})_2(\sigma_{BT})_3(\sigma_{BT})_4(Y_1\otimes v_i\otimes Y_2\otimes Y' \otimes Y'')\\
&=(\Id_{\caly(V)}\otimes\prec_\sigma)(\sigma_{BT})_1(\sigma_{BT})_2(Y\otimes Y' \otimes Y''),
\end{split}\]
where the second equality is due to Eq.~\meqref{tv1} suppressing the decorations $v_1,\dots,v_{m+n+l}$ by convention; the third one holds by the induction hypothesis for Eq.~\meqref{ba1} of $*_\sigma$, and the last one is by Eq.~\meqref{beta}.
The proof of
\[\sigma_{BT}(Y\otimes (Y'\prec_\sigma Y''))=(\prec_\sigma\otimes \Id_{\caly(V)})(\sigma_{BT})_2(\sigma_{BT})_1(Y\otimes Y'\otimes Y'')\]
is similar. Further, for $Y'=Y'_1\vee_{v_j}Y'_2$ with $m+1\leq j\leq m+n$, we have
\begin{align*}
\sigma&_{BT}((Y\succ_\sigma Y')\otimes Y'')(v_1\otimes\cdots\otimes v_{m+n+l})=\sigma_{BT}(((Y*_\sigma Y_1')\vee_{v_j} Y_2')\otimes Y'')\\
&=(\Id_{\caly(V)}\otimes\vee)(\sigma_{BT})_1(\sigma_{V,\caly(V)})_2(\sigma_{BT})_3((Y*_\sigma Y_1')\otimes v_j\otimes Y_2'\otimes Y'')\\
&=(\Id_{\caly(V)}\otimes
(\vee(*_\sigma\otimes\Id_V\otimes \Id_{\caly(V)})))(\sigma_{BT})_1(\sigma_{BT})_2(\sigma_{V,\caly(V)})_3(\sigma_{BT})_4(Y\otimes Y_1'\otimes v_j\otimes Y_2'\otimes Y'')\\
&=(\Id_{\caly(V)}\otimes\succ_\sigma)(\sigma_{BT})_1(\sigma_{BT})_2(Y\otimes Y'\otimes Y''),
\end{align*}
where the second equality is due to Eq.~\meqref{tv1}; the third one holds by the induction hypothesis for Eq.~\meqref{ba1} on $*_\sigma$ and the last one is by Eq.~\meqref{beta}.
A similar argument gives
\[\sigma_{BT}(Y\otimes (Y'\succ_\sigma Y''))=(\succ_\sigma\otimes \Id_{\caly(V)})(\sigma_{BT})_2(\sigma_{BT})_1(Y\otimes Y'\otimes Y'').\]
So $(\oline{\caly(V)},\prec_\sigma,\succ_\sigma,\sigma_{BT})$ is a braided dendriform algebra and $(\caly(V),*_\sigma,\sigma_{BT})$ a braided algebra.
\end{proof}

Since $\sigma_{BT}$ is homogeneous, we can regard $\oline{\caly(V)}$ as a braided subspace of $\caly(V)$. Let
\[i_V:V\rar \oline{\caly(V)},\,v\mapsto\raisebox{-.1mm}{\xy 0;/r.3pc/:
	(1.5,4)*{};(0,2)*{}**\dir{-};
	(-1.5,4)*{};(0,2)*{}**\dir{-};
	(0,0)*{};(0,2)*{}**\dir{-};(1.5,1.5)*{\scriptstyle v};
	\endxy}\]
be the natural embedding of braided vector spaces. Denote
$\raisebox{-.1mm}{\xy 0;/r.3pc/:
	(1.5,4)*{};(0,2)*{}**\dir{-};
	(-1.5,4)*{};(0,2)*{}**\dir{-};
	(0,0)*{};(0,2)*{}**\dir{-};(1.5,1.5)*{\scriptstyle v};
	\endxy}=|\vee_v|$ by $Y[v]$ for $v\in V$.

Now we are in the position to show that $\oline{\caly(V)}$ is a free object in the category of braided dendriform algebras, as a braided version of Loday's result~\cite[Theorem 5.8]{Lo1}.

\begin{theorem}
Given a braided vector space $(V,\sigma)$, the quadruple $(\oline{\caly(V)},\prec_\sigma,\succ_\sigma,\sigma_{BT})$ together with the map $i_V: V\rar \oline{\caly(V)}$ is the free braided dendriform algebra on $(V,\sigma)$. More precisely, for any braided dendriform algebra $(D,\prec_D,\succ_D,\tau)$ and a homomorphism $\varphi:V\rar D$ of braided vector spaces, there exists a unique homomorphism $\bar{\varphi}:\oline{\caly(V)}\rar D$ of braided dendriform algebras such that $\varphi=\bar{\varphi}\circ i_V$.
\mlabel{fbd}
\end{theorem}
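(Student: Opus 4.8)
The plan is to build $\bar\varphi$ explicitly by induction on the vertex degree $d_v$ and then check that it is the unique homomorphism of braided dendriform algebras extending $\varphi$. On $\caly_1(V)=\bk\calb_1\otimes V$ we are forced to set $\bar\varphi(Y[v])=\varphi(v)$, since $i_V(v)=Y[v]$. For higher degree, every $Y(v_1,\dots,v_n)\in\bar\calb_n(V)$ with $n\geq 2$ can be written uniquely as $Y_1\vee_{v_i}Y_2$ with $Y_1,Y_2$ of strictly smaller degree, and the recursions \meqref{eq:dendii} allow us to express $Y_1\vee_{v_i}Y_2$ in terms of $\prec_\sigma,\succ_\sigma$ of smaller trees: indeed one has the standard identity $Y_1\vee_u Y_2 = (Y_1 \vee_u |)\succ_\sigma Y_2 + $ (lower terms) — more precisely, writing $Y=Y_1\vee_u Y_2$, the decomposition $Y = (Y_1\succ_\sigma Y[u]')\cdots$ must be unrolled via the grafting-versus-$\prec,\succ$ relations so that $Y$ lies in the dendriform subalgebra generated by $i_V(V)$; this is exactly Loday's argument that $\oline{\caly(V)}$ is \emph{generated} as a dendriform algebra by $i_V(V)$. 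Hence $\bar\varphi$ is \emph{uniquely determined} on all of $\oline{\caly(V)}$ as soon as it is required to be a dendriform homomorphism with $\bar\varphi\circ i_V=\varphi$; this gives uniqueness for free and also defines the candidate map.

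Next I would verify that this $\bar\varphi$ is a homomorphism of dendriform algebras, i.e. $\bar\varphi(a\prec_\sigma b)=\bar\varphi(a)\prec_D\bar\varphi(b)$ and similarly for $\succ_\sigma$. This is precisely the content of Loday's freeness theorem \cite[Theorem 5.8]{Lo1}, which is quoted in the excerpt; so I would invoke it directly rather than reprove it. The only genuinely new point is the braiding compatibility $(\bar\varphi\otimes\bar\varphi)\sigma_{BT}=\tau(\bar\varphi\otimes\bar\varphi)$. I would prove this by a double induction on the pair of vertex degrees $(d_v(a),d_v(b))$. The base cases where one argument is $|$ are immediate from \meqref{ba2}-type normalizations and the definition of $\sigma_{BT}$ on $|\otimes(-)$; the base case $d_v(a)=d_v(b)=1$ reduces to $(\varphi\otimes\varphi)\sigma=\tau(\varphi\otimes\varphi)$, which holds since $\varphi$ is a braided homomorphism. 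For the inductive step, suppose say $d_v(b)\geq 2$ and write $b$ in dendriform terms as built from smaller pieces; push $\sigma_{BT}$ across using the compatibility relations \meqref{bda1}--\meqref{bda2} for $(\oline{\caly(V)},\prec_\sigma,\succ_\sigma,\sigma_{BT})$ established in Theorem~\mref{bdbt}, apply the induction hypothesis to the resulting lower-degree tensors, then reassemble on the $D$-side using the corresponding compatibility relations \meqref{bda1}--\meqref{bda2} for $(D,\prec_D,\succ_D,\tau)$ and the already-established fact that $\bar\varphi$ respects $\prec_D,\succ_D$. The symmetric case $d_v(a)\geq 2$ is handled the same way via the mirror relations.

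The step I expect to be the main obstacle is making the inductive reduction genuinely \emph{decrease} the induction parameter while staying within relations we already have. The recursive decomposition $Y=Y_1\vee_{v_i}Y_2$ lowers $d_v$, but re-expressing $\vee$ through $\prec_\sigma,\succ_\sigma$ introduces the products $*_\sigma=\prec_\sigma+\succ_\sigma$ of subtrees, so one must be careful that the arguments fed to the induction hypothesis really have smaller total degree and that the bookkeeping of the operators $\beta_{mn}$ (via Eq.~\meqref{beta}) matches up on both sides — essentially the same computation as in the proof of Theorem~\mref{bdbt}, now transported along $\bar\varphi$. A clean way to organize this is to first record the auxiliary compatibilities of $\bar\varphi$ with the mixed braidings, namely $(\bar\varphi\otimes\id_V)\sigma_{\caly(V),V}=\tau_{D}(\id_V\otimes\bar\varphi)$-type statements linking $\sigma_{V,\caly(V)}$, $\sigma_{\caly(V),V}$ to $\tau$ on $D\otimes V$ and $V\otimes D$ (these follow by a shorter induction, using the analogue of the Lemma preceding Theorem~\mref{bdbt}), and then feed these into the grafting identities \meqref{tv1}--\meqref{tv2}. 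Once that scaffolding is in place, the main induction for $(\bar\varphi\otimes\bar\varphi)\sigma_{BT}=\tau(\bar\varphi\otimes\bar\varphi)$ goes through essentially term-by-term, and together with Loday's theorem and the forced-uniqueness observation this completes the proof.
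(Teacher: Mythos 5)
Your proposal follows essentially the same route as the paper: define $\bar\varphi$ via the grafting decomposition (the paper writes the explicit recursion $\bar\varphi(Y\vee_v Y')=\bar\varphi(Y)\succ_D\varphi(v)\prec_D\bar\varphi(Y')$), invoke Loday's Theorem 5.8 for the dendriform-homomorphism property, prove $(\bar\varphi\otimes\bar\varphi)\sigma_{BT}=\tau(\bar\varphi\otimes\bar\varphi)$ by induction on vertex degree using the mixed-braiding identities and the compatibilities \meqref{bda1}--\meqref{bda2} on both $\caly(V)$ and $D$, and get uniqueness from generation by $i_V(V)$. The only differences (double induction decomposing the second argument versus the paper's single induction decomposing the first via Eq.~\meqref{tv1}) are organizational, and your worry about the induction parameter is resolved by the identities $|\vee_u Y_2=Y[u]\prec_\sigma Y_2$ and $Y_1\vee_u Y_2=Y_1\succ_\sigma(Y[u]\prec_\sigma Y_2)$, which always strictly lower the degree.
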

\begin{proof}
Given any braided dendriform algebra $(D,\prec_D,\succ_D,\tau)$ and a homomorphism $\varphi:V\rar D$ of braided vector spaces, one can define a linear map $\bar{\varphi}:\caly(V)\rar D^+:=D\oplus \bfk 1$ recursively by
\begin{align*}
&\bar{\varphi}(\,|\,)=1,\,\bar{\varphi}(Y[v])=\varphi(v),\\
&\bar{\varphi}(Y\vee_v Y')=\bar{\varphi}(Y)\succ_D\varphi(v)\prec_D\bar{\varphi}(Y'),
\end{align*}
for $Y(v_1,\dots,v_m)$ and $Y'(v_{m+1},\dots,v_{m+n})$ with $m,n\geq0$ and $v_1,\dots,v_{m+n},v\in V$.
Checking as in the proof of \cite[Theorem 5.8]{Lo1}, we find that the restriction of $\bar{\varphi}$ to $\oline{\caly(V)}$ is a homomorphism of dendriform algebras. Next we show that
\[(\bar{\varphi}\otimes\bar{\varphi})\sigma_{BT}(Y\otimes Y')=\tau(\bar{\varphi}\otimes\bar{\varphi})(Y\otimes Y')\]
by induction on the vertex degree $d_v$ of $Y$. If $Y=|\,$, it is clear by the definition of $\sigma_{BT}$.
Otherwise, one can check it in four cases depending on whether $Y_1$ or $Y_2$ is $|$ or not for $Y=Y_1\vee_{v_i} Y_2$ with $1\leq i\leq m$. Here we only consider the case when $Y_1,\,Y_2\neq|$ since other cases are easier to verify.
\begin{align*}
(\bar{\varphi}&\otimes\bar{\varphi})\sigma_{BT}(Y\otimes Y')=(\bar{\varphi}\otimes\bar{\varphi})(Y'\otimes Y)\beta_{mn}(v_1\otimes \cdots\otimes v_{m+n})\\
&=(\bar{\varphi}\otimes(\bar{\varphi}\succ_D\varphi\prec_D\bar{\varphi}))(Y'\otimes Y_1\otimes\Id_V\otimes Y_2)\beta_{mn}(v_1\otimes \cdots\otimes v_{m+n})\\
&=(\bar{\varphi}\otimes(\bar{\varphi}\succ_D\varphi\prec_D\bar{\varphi}))(\sigma_{BT})_1(\sigma_{V,\caly(V)})_2(\sigma_{BT})_3(Y_1\otimes v_i\otimes Y_2\otimes Y')\\
&=(\Id_D\otimes\succ_D\prec_D)\tau_1\tau_2\tau_3(\bar{\varphi}\otimes\varphi\otimes\bar{\varphi}\otimes\bar{\varphi})(Y_1\otimes v_i\otimes Y_2\otimes Y')\\
&=\tau((\bar{\varphi}\succ_D\varphi\prec_D\bar{\varphi})\otimes\bar{\varphi})(Y_1\otimes v_i\otimes Y_2\otimes Y')=\tau(\bar{\varphi}\otimes\bar{\varphi})(Y\otimes Y'),
\end{align*}
where the first and third equalities are by the definitions of $\sigma_{BT}$ and $\sigma_{V,\caly(V)}$; the second and last equalities are by the definition of $\bar{\varphi}$; the fourth equality uses the induction hypothesis; and the fifth one is due to Eqs.~\meqref{bda1} and \meqref{bda2} on $D$. Hence, we have $(\bar{\varphi}\otimes\bar{\varphi})\sigma_{BT}=\tau(\bar{\varphi}\otimes\bar{\varphi})$.

By definition, we know that $\bar{\varphi}$ is uniquely determined by its image on $Y[v]$ as a homomorphism of dendriform algebras with $\bar{\varphi}(\,|\,)=1$, while $\varphi=\bar{\varphi}\circ i_V$ means that $\varphi(v)=\bar{\varphi}(Y[v])$ for any $v\in V$. Hence, $(\oline{\caly(V)},\prec_\sigma,\succ_\sigma,\sigma_{BT})$ with $i_V:V\rar\oline{\caly(V)}$ has the desired universal property.
\end{proof}

\subsection{The braided Hopf algebra of planar binary trees}
\mlabel{ss:hopfd}

Dual to the notion of bradied algebras, we introduce the notion of braided coalgebras, and then braided bialgebras and braided Hopf alebras. See \mcite{Ka,Mon} for details.
\begin{defn}
	We call $(C,\Delta,\varepsilon,\sigma)$ a {\bf braided coalgebra} if $(C,\Delta,\varepsilon)$ is a coalgebra with braiding $\sigma$ and satisfies
	\begin{align}
	&\mlabel{bc1}
\sigma_1\sigma_2(\Delta\otimes\Id_C)=(\Id_C\otimes\Delta)\sigma,\,
	\sigma_2\sigma_1(\Id_C\otimes\Delta)=(\Delta\otimes
	\Id_C)\sigma,\\
    &\mlabel{bc2}
	 (\varepsilon\otimes\Id_C)\sigma=\Id_C=(\Id_C\otimes\varepsilon)\sigma.
	\end{align}
	Also, a quintuple $(H,\mu,\Delta,\varepsilon,\sigma)$ is called a {\bf braided bialgebra}, if $(H,\mu,\sigma)$ (resp. $(H,\Delta,\varepsilon,\sigma)$) is a braided algebra (resp. coalgebra) such that
	\begin{equation}
	\Delta\,\mu=(\mu\otimes\mu)\sigma_2(\Delta\otimes \Delta).
	\mlabel{eq:comp}
	\end{equation}
	When $H$ has unit $1_H$ and antipode $S:H\to H$ such that $\mu(S\otimes\Id_H)\Delta=\varepsilon 1_H=\mu(\Id_H\otimes S)\Delta$,
	then the septuple $(H,\mu,\Delta,\vep,S,\sigma)$ is a {\bf braided Hopf algebra}. The homomorphisms for these braided objects are similarly defined as in the case of braided algebras.
\end{defn}

We also introduce the following useful notion~\cite[\S 5.2]{Mon}.
\begin{defn}
	For any coalgebra $(C,\Delta,\varepsilon)$, let $C_0$ be the sum of all simple subcoalgebras of $C$, called the {\bf coradical} of $C$, and recursively define
	\[C_n=\{x\in C\,|\,\Delta(x)\in C_0\otimes C+C\otimes C_{n-1}\},\,n\geq1.\]
	Then $\{C_n\}_{n\geq0}$ is a subcoalgebra filtration of $C$, called the {\bf coradical filtration} of $C$.
	In particular, if the coradical $C_0$ of $C$ is one-dimensional, then $C$ is called {\bf connected}.
\end{defn}

We recall the following well-known fact of Takeuchi~\cite[Lemma 5.2.10]{Mon}. See also~\mcite{Kh,LR3,Ta}.
\begin{lemma}
	Any (braided) bialgebra connected as a coalgebra is a (braided) Hopf algebra.
\mlabel{cbch}
\end{lemma}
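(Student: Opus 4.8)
The plan is to produce the antipode $S$ as the two-sided convolution inverse of $\Id_H$, following verbatim the classical Takeuchi argument for connected bialgebras~\cite[Lemma 5.2.10]{Mon}. The first observation is that the braiding is irrelevant to the statement: the defining condition $\mu(S\otimes\Id_H)\Delta=\varepsilon 1_H=\mu(\Id_H\otimes S)\Delta$ for an antipode does not involve $\sigma$, and the compatibilities \meqref{ba1}, \meqref{bc1}, \meqref{eq:comp} are not needed to establish existence of $S$. So it suffices to treat $(H,\mu,\Delta,\varepsilon)$ as an ordinary connected bialgebra and to invoke (or reprove) the classical fact; the braided version then follows immediately, since $(H,\mu,\Delta,\varepsilon,S,\sigma)$ satisfies all axioms of a braided Hopf algebra as soon as the antipode axiom holds.

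For the construction, I would first recall the convolution algebra structure: for $f,g\in\End(H)$ set $f*g:=\mu(f\otimes g)\Delta$, making $(\End(H),*)$ an associative unital algebra with unit $\eta\varepsilon$, where $\eta\colon\bk\to H$, $1\mapsto 1_H$; by definition an antipode is precisely a two-sided $*$-inverse of $\Id_H$. Next I would exploit connectedness: since $H_0=\bk 1_H$ is one-dimensional, $\pi:=\Id_H-\eta\varepsilon$ is the projection of $H$ onto $\bar H:=\ker\varepsilon$ along $\bk 1_H$, so in particular $\pi|_{H_0}=0$. The central claim is that for every $n\geq0$ the restriction of $\pi^{*(n+1)}$ to $H_n$ vanishes, proved by induction on $n$: the base case is $\pi(1_H)=0$, and for the inductive step one writes $\pi^{*(n+1)}(x)=\mu(\pi\otimes\pi^{*n})\Delta(x)$ for $x\in H_n$ and splits $\Delta(x)$ using the coradical filtration inclusion $\Delta(H_n)\subseteq H_0\otimes H+H\otimes H_{n-1}$; the first summand dies because $\pi$ kills $H_0$ in the left leg, the second because $\pi^{*n}$ kills $H_{n-1}$ in the right leg by the induction hypothesis.

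With that in hand, since $\bigcup_{n\geq0}H_n=H$ and on each $H_m$ all but finitely many of the maps $\pi^{*n}$ vanish, the series $S:=\sum_{n\geq0}(-1)^n\pi^{*n}$ defines a well-defined linear endomorphism of $H$. Writing $\Id_H=\eta\varepsilon+\pi$, a telescoping computation gives $\Id_H*S=\sum_{n\geq0}(-1)^n\pi^{*n}+\sum_{n\geq0}(-1)^n\pi^{*(n+1)}=\eta\varepsilon$, and symmetrically $S*\Id_H=\eta\varepsilon$, so $S$ is an antipode and $H$ is a braided Hopf algebra. The only step requiring genuine work is the inductive local-conilpotency estimate in the previous paragraph; it is the classical heart of Takeuchi's lemma, and there is no extra braided obstacle precisely because the antipode axiom is $\sigma$-free.
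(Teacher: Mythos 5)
Your proof is correct and is precisely the classical Takeuchi argument that the paper itself invokes by citation (\cite[Lemma 5.2.10]{Mon}) rather than proving: local conilpotency of $\pi=\Id_H-\eta\varepsilon$ along the coradical filtration, then the geometric series for the convolution inverse of $\Id_H$. Your observation that the braiding is irrelevant is also valid for this paper's definition of braided Hopf algebra, which imposes no compatibility between $S$ and $\sigma$, so nothing further is needed.
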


Now we recall the well-known Loday-Ronco Hopf algebra~\mcite{LR1}.
\begin{defn}
The {\bf Loday-Ronco Hopf algebra} $\calh_{BT}(X)$ of planar binary trees is the $\bk$-linear space $\bk\calb(X)=\bigoplus_{n\geq0}\bk\calb_n(X)$, in which
\begin{enumerate}
\item
the product $*$ is recursively defined by
\[|*Y=Y*|=Y,\quad Y*Y'=Y_1\vee_x(Y_2*Y')+(Y*Y_1')\vee_y Y_2',\]
for any $Y=Y_1\vee_x Y_2,\,Y'=Y_1'\vee_y Y_2',\,x,y\in X$, and with the unit $\,|\,$,
\item
the coproduct $\Delta_{BT}$ is recursively defined by
\begin{equation}\mlabel{lrco}
\Delta_{BT}(\,|\,)=|\otimes |\,,\quad\,\Delta_{BT}(Y\vee_x Y')=(Y\vee_x Y')\otimes |+ \sum (Y_{(1)}*Y'_{(1)})\otimes(Y_{(2)}\vee_x Y'_{(2)}),
\end{equation}
where $\Delta_{BT}(Y)=\sum Y_{(1)}\otimes Y_{(2)},\, \Delta_{BT}(Y')=\sum Y'_{(1)}\otimes Y'_{(2)}$, and the counit $\varepsilon_{BT}$ has $\varepsilon_{BT}(Y)=\delta_{Y,\,|}$.
\end{enumerate}
\mlabel{lr}
\end{defn}

The coproduct $\Delta_{BT}$ on $\calh_{BT}(X)$ can be linearly extended to $\caly(V)$ as mentioned in \cite[\S 5.13]{Lo1}. Next we define $\bk$-linear map
$\Delta'_{BT}:\caly(V)\rar\caly(V)\otimes\caly(V)$
recursively by
\[\Delta'_{BT}(\,|\,)=|\otimes |\,,\,
\Delta'_{BT}(Y\vee_v Y')=(Y\vee_v Y')\otimes |+(*_\sigma,\vee)(\Delta'_{BT}(Y)\otimes v\otimes\Delta'_{BT}(Y')),\]
for any $Y,Y'\in\bar{\calb}(V),\,v\in V$, where $ (*_\sigma,\vee):=(*_\sigma\otimes\vee)(\sigma_{BT})_2(\sigma_{V,\caly(V)})_3$. Then $\Delta'_{BT}$ is a braided version of $\Delta_{BT}$.
Define a linear map $\varepsilon'_{BT}:\caly(V)\rar\bk$ by $\varepsilon'_{BT}(Y)=\delta_{Y,\,|}$.

\begin{theorem}
	Given a braided vector space $(V,\sigma)$,
	the quintuple $(\caly(V),*_\sigma,\Delta'_{BT},\varepsilon'_{BT},\sigma_{BT})$
	is a braided Hopf algebra.
\mlabel{bhbt}
\end{theorem}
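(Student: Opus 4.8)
The plan is to verify in turn each of the defining axioms of a braided Hopf algebra from the definition, reusing the work already done in Theorems~\mref{bdbt} and~\mref{fbd}, and finally invoking Lemma~\mref{cbch} to produce the antipode for free. First, Theorem~\mref{bdbt} already gives that $(\caly(V),*_\sigma,\sigma_{BT})$ is a (unital) braided algebra, so the algebra half of Eq.~\meqref{ba1} and the unitality Eq.~\meqref{ba2} are in hand. It therefore remains to check that $(\caly(V),\Delta'_{BT},\varepsilon'_{BT},\sigma_{BT})$ is a braided coalgebra, i.e.\ the coassociativity and counit axioms for $\Delta'_{BT}$ together with the braided-coalgebra compatibilities Eqs.~\meqref{bc1}--\meqref{bc2}, and then the bialgebra compatibility Eq.~\meqref{eq:comp} relating $*_\sigma$ and $\Delta'_{BT}$ through $\sigma_{BT}$.

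For coassociativity and the counit axiom, I would argue by induction on the vertex degree $d_v$, exactly paralleling Loday's proof in the non-braided case~\cite[\S 5.13]{Lo1}: the degree-$0$ and degree-$1$ cases are immediate from the definition $\Delta'_{BT}(\,|\,)=|\otimes|$ and $\Delta'_{BT}(Y[v])=Y[v]\otimes|+|\otimes Y[v]$, and for $Y\vee_v Y'$ one unwinds the recursion $\Delta'_{BT}(Y\vee_v Y')=(Y\vee_v Y')\otimes|+(*_\sigma,\vee)(\Delta'_{BT}(Y)\otimes v\otimes\Delta'_{BT}(Y'))$, applies the induction hypothesis to $Y$ and $Y'$, and collapses the resulting expression using associativity of $*_\sigma$ (already known) and the two identities Eq.~\meqref{tv1}--\meqref{tv2} of the Lemma governing how $\sigma_{BT}$, $\sigma_{V,\caly(V)}$ and $\sigma_{\caly(V),V}$ interact with $\vee$, plus the hexagon-type identities Eq.~\meqref{beta} for $\beta$. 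The braided-coalgebra axioms Eq.~\meqref{bc1} say that $\sigma_{BT}$ intertwines $\Delta'_{BT}$ with itself in the appropriate tensor slots; I would again induct on $d_v$, using the explicit homogeneous formula for $\sigma_{BT}$ in terms of $\beta_{mn}$ together with Eq.~\meqref{beta} to push the braiding through the recursive definition of $\Delta'_{BT}$. A clean alternative here is to observe that $\Delta'_{BT}$, $*_\sigma$ and $\sigma_{BT}$ are all built functorially out of $\beta$ on $T(V)$, and that these compatibilities are precisely the statements that $\beta$ is a braiding (Eq.~\meqref{eq:qybe}) together with Eq.~\meqref{beta}; but spelling out the induction is probably safest for the write-up.

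The heart of the proof is the bialgebra compatibility Eq.~\meqref{eq:comp}, $\Delta'_{BT}\,*_\sigma=(*_\sigma\otimes *_\sigma)(\sigma_{BT})_2(\Delta'_{BT}\otimes\Delta'_{BT})$. I would prove this by a double induction on $d_v(Y)+d_v(Y')$ for $Y,Y'\in\bar{\calb}(V)$. When either argument is $|$ the identity is trivial. For $Y=Y_1\vee_u Y_2$ and $Y'=Y_1'\vee_w Y_2'$, expand $Y*_\sigma Y'=Y_1\vee_u(Y_2*_\sigma Y')+(Y*_\sigma Y_1')\vee_w Y_2'$ (the braided product recursion, which is just Eq.~\meqref{eq:dendii} with $*_\sigma=\prec_\sigma+\succ_\sigma$), apply $\Delta'_{BT}$ to each summand via its defining recursion, and on the right-hand side expand $\Delta'_{BT}(Y)$ and $\Delta'_{BT}(Y')$ by their recursions as well. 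Matching the two sides is a bookkeeping computation in which every rearrangement of tensor factors that occurs is an instance of the braid relation for $\beta$ (Eq.~\meqref{eq:qybe}), one of the identities in Eq.~\meqref{beta}, one of the Lemma's identities Eq.~\meqref{tv1}--\meqref{tv2}, the induction hypothesis for Eq.~\meqref{eq:comp} in lower degree, or the braided-algebra axioms Eq.~\meqref{ba1} for $*_\sigma$ (Theorem~\mref{bdbt}). This is exactly the braided analogue of Loday--Ronco's verification that $\Delta_{BT}$ is multiplicative, with every naive flip replaced by the appropriate component of $\beta$, and the compatibility Eqs.~\meqref{bda1}--\meqref{bda2} of $\sigma_{BT}$ with $\prec_\sigma,\succ_\sigma$ guaranteeing that the insertions of $(\sigma_{BT})_2$ land in the right places.

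Finally, $\varepsilon'_{BT}$ is visibly a braided-algebra map and $\Delta'_{BT}$ is graded with $\Delta'_{BT}(Y)-Y\otimes|-|\otimes Y$ landing in $\bigoplus_{0<i<n}\caly_i(V)\otimes\caly_{n-i}(V)$ for $Y\in\caly_n(V)$, so the coradical filtration is the degree filtration and $(\caly(V))_0=\bk\,|$ is one-dimensional; hence $\caly(V)$ is connected as a coalgebra, and Lemma~\mref{cbch} (Takeuchi) upgrades the braided bialgebra to a braided Hopf algebra, supplying the antipode $S$ automatically. The main obstacle, as anticipated, is the Eq.~\meqref{eq:comp} induction: keeping track of which factor of $\beta$ appears where as the recursions for $*_\sigma$ and $\Delta'_{BT}$ are interleaved is delicate, and the key technical inputs that make it go through are Eq.~\meqref{beta}, the Lemma's Eqs.~\meqref{tv1}--\meqref{tv2}, and the braided-algebra compatibility Eq.~\meqref{ba1} for $*_\sigma$ established in Theorem~\mref{bdbt}.
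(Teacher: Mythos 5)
Your proposal follows essentially the same route as the paper: reuse Theorem~\mref{bdbt} for the braided algebra structure, verify the compatibility \meqref{eq:comp}, the braided coalgebra axioms \meqref{bc1}--\meqref{bc2} and coassociativity of $\Delta'_{BT}$ by induction on the vertex degree $d_v$, and finish by connectedness and Lemma~\mref{cbch}. One sequencing point needs fixing: you place coassociativity before the multiplicativity \meqref{eq:comp} and claim that ``associativity of $*_\sigma$'' suffices to collapse the expansion of $(\Delta'_{BT}\otimes\Id)\Delta'_{BT}(Y\vee_v Y')$; but that expansion forces you to evaluate $\Delta'_{BT}$ on the $*_\sigma$-products produced by $(*_\sigma,\vee)$, which already requires \meqref{eq:comp} (together with \meqref{bc1}) in hand. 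The paper accordingly establishes \meqref{eq:comp} first, then \meqref{bc1}, and only then coassociativity; with that reordering your plan coincides with the paper's proof.
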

\begin{proof}
The fact that $(\caly(V),*_\sigma,\sigma_{BT})$ is a braided algebra is checked in Theorem \mref{bdbt}. Next we show that $(\caly(V),*_\sigma,\Delta'_{BT},\sigma_{BT})$ is a braided bialgebra.

Let us fix $Y(v_1,\dots,v_m)=Y_1\vee_{v_i} Y_2$ and $Y'(v_{m+1},\dots,v_{m+n})=Y_1'\vee_{v_j} Y_2'$, $,\,1\leq i\leq m,\,m+1\leq j\leq m+n$. First we check the compatibility in Eq.~\meqref{eq:comp} between $*_\sigma$ and $\Delta'_{BT}$ for which we apply induction on the vertex degree $d_v$. When one of $Y,Y'$ is $|$\,, it is clear. Otherwise, with the notation
$(*_\sigma,*_\sigma)=(*_\sigma\otimes *_\sigma)(\sigma_{BT})_2$,
$(*_\sigma,\prec_\sigma)=(*_\sigma\otimes\prec_\sigma)(\sigma_{BT})_2$ and $(*_\sigma,\succ_\sigma)=(*_\sigma\otimes\succ_\sigma)(\sigma_{BT})_2$, we have
\begin{align*}
\Delta'_{BT}&(Y*_\sigma Y')=\Delta'_{BT}(Y_1\vee_{v_i}(Y_2*_\sigma Y')+(Y*_\sigma Y_1')\vee_{v_j} Y_2')\\
&=(Y*_\sigma Y')\otimes |
+(*_\sigma,\vee)(\Delta'_{BT}(Y_1)\otimes v_i\otimes\Delta'_{BT}(Y_2*_\sigma Y'))\\
&\quad+(*_\sigma,\vee)(\Delta'_{BT}(Y*_\sigma Y_1')\otimes v_j\otimes\Delta'_{BT}(Y_2'))\\
&=(Y*_\sigma Y')\otimes |+(*_\sigma,\prec_\sigma)\left(\Delta'_{BT}(Y_1\vee_{v_i} Y_2)\otimes\Delta'_{BT}(Y')-(Y_1\vee_{v_i} Y_2)\otimes |\otimes Y'\otimes |\,\right)\\
&\quad+(*_\sigma,\succ_\sigma)\left(\Delta'_{BT}(Y)\otimes\Delta'_{BT}(Y'_1\vee_{v_j} Y'_2)-Y\otimes |\otimes (Y'_1\vee_{v_j} Y'_2)\otimes |\,\right)\\
&=(*_\sigma,*_\sigma)(\Delta'_{BT}(Y)\otimes \Delta'_{BT}(Y')),
\end{align*}
where the first and second equalities are by the definitions of $*_\sigma$ and $\Delta'_{BT}$ respectively; the third one is due to the induction hypothesis and the definitions of $\prec_\sigma,\,\succ_\sigma$.

Next we show that  $(\caly(V),\Delta'_{BT},\varepsilon'_{BT},\sigma_{BT})$ is a braided coalgebra. Then $(\caly(V),*_\sigma,\Delta'_{BT},\varepsilon'_{BT},\sigma_{BT})$ is a braided bialgebra by previously proved relation. We first need to check the conditions in Eqs.~\meqref{bc1} and \meqref{bc2} for $\Delta'_{BT}$, but Eq.~\meqref{bc2} is clear by definition. For condition \meqref{bc1}, we show it by induction on $d_v$. Again, if one of $Y,Y'$ is $|$\,, it is obvious. If not, then
\begin{align*}
(\Delta'_{BT}&\otimes\Id_{\caly(V)})\sigma_{BT}(Y\otimes Y')
=(\sigma_{BT})_2(\sigma_{BT})_1(Y\otimes Y'\otimes |\,)\\ &\quad +((*_\sigma,\vee)(\Delta'_{BT}\otimes\Id_V\otimes\Delta'_{BT})\otimes\Id_{\caly(V)})(\sigma_{BT})_3(\sigma_{\caly(V),V})_2(\sigma_{BT})_1(Y\otimes Y'_1\otimes v_j \otimes Y'_2)\\
&=(\sigma_{BT})_2(\sigma_{BT})_1(Y\otimes Y'\otimes |\,)+(*_\sigma\otimes\vee\otimes\Id_{\caly(V)})(\sigma_{BT})_2(\sigma_{V,\caly(V)})_3\\
&\quad\,(\Delta'_{BT}\otimes\Id_V\otimes\Delta'_{BT}\otimes\Id_{\caly(V)})(\sigma_{BT})_3(\sigma_{\caly(V),V})_2(\sigma_{BT})_1(Y\otimes Y'_1\otimes v_j \otimes Y'_2)\\
&=(\sigma_{BT})_2(\sigma_{BT})_1(Y\otimes Y'\otimes |\,)+(*_\sigma\otimes\vee\otimes\Id_{\caly(V)})(\sigma_{BT})_5(\sigma_{\caly(V),V})_4
(\sigma_{BT})_3(\sigma_{BT})_2(\sigma_{BT})_1\\
&\quad\,(\sigma_{BT})_3(\sigma_{V,\caly(V)})_4(Y\otimes\Delta'_{BT}(Y'_1)\otimes v_j \otimes\Delta'_{BT}(Y'_2))\\
&=(\sigma_{BT})_2(\sigma_{BT})_1(Y\otimes Y'\otimes |\,)+(\sigma_{BT})_2(\sigma_{BT})_1(\Id_{\caly(V)}\otimes(*_\sigma,\vee))(Y\otimes\Delta'_{BT}(Y'_1)\otimes v_j \otimes\Delta'_{BT}(Y'_2))\\
&=(\sigma_{BT})_2(\sigma_{BT})_1(\Id_{\caly(V)}\otimes\Delta'_{BT})(Y\otimes Y'),
\end{align*}
where the first and last equalities are by the definition of $\Delta'_{BT}$; the second one is by the definition of $(*_\sigma,\vee)$; the third one uses the induction hypothesis; the fourth one is due to conditions in Eq.~\meqref{ba1} of $*_\sigma$ and in Eq.~\meqref{tv1} of $\vee$. The second identity in Eq.~\meqref{bc1} for $\Delta'_{BT}$ is similar to check.

For the coassociativity of $\Delta'_{BT}$, it is also proved by induction on $d_v$. In fact, given $u\in V$,
\begin{align*}
(\Delta'_{BT}&\otimes\Id_{\caly(V)})\Delta'_{BT}(Y\vee_u Y')=\Delta'_{BT}(Y\vee_u Y')\otimes |+(\Delta'_{BT}\otimes\Id_{\caly(V)})
(*_\sigma,\vee)(\Delta'_{BT}(Y)\otimes u\otimes\Delta'_{BT}(Y'))\\
&=(Y\vee_u Y')\otimes |\otimes |+ (*_\sigma,\vee)(\Delta'_{BT}(Y)\otimes u\otimes\Delta'_{BT}(Y'))\otimes |\\
&\quad +(\Delta'_{BT}\otimes\Id_{\caly(V)})(*_\sigma\otimes \vee)(\sigma_{BT})_2(\sigma_{V,\caly(V)})_3(\Delta'_{BT}(Y)\otimes u\otimes\Delta'_{BT}(Y'))\\
&=(Y\vee_u Y')\otimes |\otimes |+ (*_\sigma,\vee)(\Delta'_{BT}(Y)\otimes u\otimes\Delta'_{BT}(Y'))\otimes |\\
&\quad +(*_\sigma\otimes*_\sigma\otimes\Id_{\caly(V)})(\sigma_{BT})_2(\Delta'_{BT}\otimes \Delta'_{BT}\otimes \vee)(\sigma_{BT})_2(\sigma_{V,\caly(V)})_3(\Delta'_{BT}(Y)\otimes u\otimes\Delta'_{BT}(Y'))\\
&=(Y\vee_u Y')\otimes |\otimes |+ (*_\sigma,\vee)(\Delta'_{BT}(Y)\otimes u\otimes\Delta'_{BT}(Y'))\otimes |\\
&\quad +(*_\sigma\otimes(*_\sigma,\vee))(\sigma_{BT})_2(\sigma_{BT})_3(\sigma_{V,\caly(V)})_4((\Id_{\caly(V)}\otimes\Delta'_{BT})\Delta'_{BT}(Y)\otimes u\otimes(\Id_{\caly(V)}\otimes\Delta'_{BT})\Delta'_{BT}(Y'))\\
&=(Y\vee_u Y')\otimes |\otimes |+ (*_\sigma,\vee)(\Delta'_{BT}(Y)\otimes u\otimes\Delta'_{BT}(Y'))\otimes |\\
&\quad +(*_\sigma\otimes(*_\sigma,\vee)(\Delta'_{BT}\otimes \Id_V\otimes\Delta'_{BT}))
(\sigma_{BT})_2(\sigma_{V,\caly(V)})_3(\Delta'_{BT}(Y)\otimes u\otimes\Delta'_{BT}(Y'))\\
&=(Y\vee_u Y')\otimes\Delta'_{BT}(\,|\,)+(\Id_{\caly(V)}\otimes\Delta'_{BT})
(*_\sigma,\vee)(\Delta'_{BT}(Y)\otimes u\otimes\Delta'_{BT}(Y'))\\
&=(\Id_{\caly(V)}\otimes\Delta'_{BT})\Delta'_{BT}(Y\vee_u Y'),
\end{align*}
where the first, second, sixth and last equalities are by the definition of $\Delta'_{BT}$; the third one uses the compatibility between $\Delta'_{BT}$ and $*_\sigma$; the fourth one is
due to Eq.~\meqref{bc1} of $\Delta'_{BT}$ and the induction hypothesis; the fifth one is also by Eq.~\meqref{bc1}.

Since the braided bialgebra $(\caly(V),*_\sigma,\Delta'_{BT},\sigma_{BT})$ is
connected graded with respect to the degree $d_v$, it is a braided Hopf algebra by Lemma~\mref{cbch}. See also~\cite[Lemma 1.27]{Fo4} or \cite[Lemma 12]{Fo3}.
\end{proof}

Recently, Gao and Zhang~\mcite{GZ1} provided an explicit combinatorial description of $\Delta_{BT}$ via certain cuts on decorated planar binary trees. We recall their construction and show that it can be extended to the braided case.

For any $Y\in\calb(X)$, a {\bf subforest} $P$ of $Y$ is defined to be the concatenation of planar binary trees consisting of a subset of internal vertices of $Y$ together with their descents and edges containing all these involved vertices.

For a subforest $P$ of $Y$, let $P^*\in\bk\calb(X)$ denote the result tree when the concatenation product of trees in $P$ is replace by the product $*$, and let $Y/P\in\calb(X)$ be obtained by removing $P$ from $Y$ in the following way. Divide all edges connecting $P$ to the rest of $Y$ into two edges such that one belongs to $P$ and the other one belonging to the rest of $Y$. Here the decorations on vertices of $Y$ remain at their corresponding positions in $P$ and $Y/P$. Let $\calf_Y$ be the set of subforests of $Y$, including the empty tree $\emptyset$ and the whole tree $Y$. With the above notation, it is shown in~\mcite{GZ1} that
\begin{equation}
\Delta_{BT}(Y)=\sum_{P\in\calf_Y}P^*\otimes(Y/P),\,Y\in\calb(X).
\mlabel{lrco1}
\end{equation}

For example, if $Y=\raisebox{-.1mm}{\xy 0;/r.3pc/:
	(3,6)*{};(0,2)*{}**\dir{-};
	(-3,6)*{};(0,2)*{}**\dir{-};
	(1.5,4)*{};(.5,6)*{}**\dir{-};
	(-1.5,4)*{};(-.5,6)*{}**\dir{-};
	(0,0)*{};(0,2)*{}**\dir{-};
	(1.5,1.5)*{\scriptstyle 2};
	(-3,3.5)*{\scriptstyle 1};
	(3,3.5)*{\scriptstyle 3};
	\endxy}$, then
\[\begin{split}
\Delta_{BT}(Y)&=\raisebox{-.1mm}{\xy 0;/r.3pc/:
	(3,6)*{};(0,2)*{}**\dir{-};
	(-3,6)*{};(0,2)*{}**\dir{-};
	(1.5,4)*{};(.5,6)*{}**\dir{-};
	(-1.5,4)*{};(-.5,6)*{}**\dir{-};
	(0,0)*{};(0,2)*{}**\dir{-};
	(1.5,1.5)*{\scriptstyle 2};
	(-3,3.5)*{\scriptstyle 1};
	(3,3.5)*{\scriptstyle 3};
	\endxy}\otimes |
+(\raisebox{-.1mm}{\xy 0;/r.3pc/:
	(1.5,4)*{};(0,2)*{}**\dir{-};
	(-1.5,4)*{};(0,2)*{}**\dir{-};
	(0,0)*{};(0,2)*{}**\dir{-};(1.5,1.5)*{\scriptstyle 1};
	\endxy}*\raisebox{-.1mm}{\xy 0;/r.3pc/:
	(1.5,4)*{};(0,2)*{}**\dir{-};
	(-1.5,4)*{};(0,2)*{}**\dir{-};
	(0,0)*{};(0,2)*{}**\dir{-};(1.5,1.5)*{\scriptstyle 3};
	\endxy})\otimes \raisebox{-.1mm}{\xy 0;/r.3pc/:
	(1.5,4)*{};(0,2)*{}**\dir{-};
	(-1.5,4)*{};(0,2)*{}**\dir{-};
	(0,0)*{};(0,2)*{}**\dir{-};(1.5,1.5)*{\scriptstyle 2};
	\endxy}
+\raisebox{-.1mm}{\xy 0;/r.3pc/:
	(1.5,4)*{};(0,2)*{}**\dir{-};
	(-1.5,4)*{};(0,2)*{}**\dir{-};
	(0,0)*{};(0,2)*{}**\dir{-};(1.5,1.5)*{\scriptstyle 1};
	\endxy}\otimes \raisebox{-.1mm}{\xy 0;/r.3pc/:
	(3,6)*{};(0,2)*{}**\dir{-};
	(-3,6)*{};(0,2)*{}**\dir{-};
	(1.5,4)*{};(0,6)*{}**\dir{-};
	(0,0)*{};(0,2)*{}**\dir{-};
	(1.5,1.5)*{\scriptstyle 2};
	(3,3.5)*{\scriptstyle 3};
	\endxy}
+\raisebox{-.1mm}{\xy 0;/r.3pc/:
	(1.5,4)*{};(0,2)*{}**\dir{-};
	(-1.5,4)*{};(0,2)*{}**\dir{-};
	(0,0)*{};(0,2)*{}**\dir{-};(1.5,1.5)*{\scriptstyle 3};
	\endxy}\otimes\raisebox{-.1mm}{\xy 0;/r.3pc/:
	(3,6)*{};(0,2)*{}**\dir{-};
	(-3,6)*{};(0,2)*{}**\dir{-};
	(-1.5,4)*{};(0,6)*{}**\dir{-};
	(0,0)*{};(0,2)*{}**\dir{-};
	(1.5,1.5)*{\scriptstyle 2};
	(-3,3.5)*{\scriptstyle 1};
	\endxy}
+|\otimes\raisebox{-.1mm}{\xy 0;/r.3pc/:
	(3,6)*{};(0,2)*{}**\dir{-};
	(-3,6)*{};(0,2)*{}**\dir{-};
	(1.5,4)*{};(.5,6)*{}**\dir{-};
	(-1.5,4)*{};(-.5,6)*{}**\dir{-};
	(0,0)*{};(0,2)*{}**\dir{-};
	(1.5,1.5)*{\scriptstyle 2};
	(-3,3.5)*{\scriptstyle 1};
	(3,3.5)*{\scriptstyle 3};
	\endxy}\\
&=\raisebox{-.1mm}{\xy 0;/r.3pc/:
	(3,6)*{};(0,2)*{}**\dir{-};
	(-3,6)*{};(0,2)*{}**\dir{-};
	(1.5,4)*{};(.5,6)*{}**\dir{-};
	(-1.5,4)*{};(-.5,6)*{}**\dir{-};
	(0,0)*{};(0,2)*{}**\dir{-};
	(1.5,1.5)*{\scriptstyle 2};
	(-3,3.5)*{\scriptstyle 1};
	(3,3.5)*{\scriptstyle 3};
	\endxy}\otimes |
+(\raisebox{-.1mm}{\xy 0;/r.3pc/:
	(3,6)*{};(0,2)*{}**\dir{-};
	(-3,6)*{};(0,2)*{}**\dir{-};
	(-1.5,4)*{};(0,6)*{}**\dir{-};
	(0,0)*{};(0,2)*{}**\dir{-};
	(1.5,1.5)*{\scriptstyle 3};
	(-3,3.5)*{\scriptstyle 1};
	\endxy}+\raisebox{-.1mm}{\xy 0;/r.3pc/:
	(3,6)*{};(0,2)*{}**\dir{-};
	(-3,6)*{};(0,2)*{}**\dir{-};
	(1.5,4)*{};(0,6)*{}**\dir{-};
	(0,0)*{};(0,2)*{}**\dir{-};
	(1.5,1.5)*{\scriptstyle 1};
	(3,3.5)*{\scriptstyle 3};
	\endxy})\otimes \raisebox{-.1mm}{\xy 0;/r.3pc/:
	(1.5,4)*{};(0,2)*{}**\dir{-};
	(-1.5,4)*{};(0,2)*{}**\dir{-};
	(0,0)*{};(0,2)*{}**\dir{-};(1.5,1.5)*{\scriptstyle 2};
	\endxy}
+\raisebox{-.1mm}{\xy 0;/r.3pc/:
	(1.5,4)*{};(0,2)*{}**\dir{-};
	(-1.5,4)*{};(0,2)*{}**\dir{-};
	(0,0)*{};(0,2)*{}**\dir{-};(1.5,1.5)*{\scriptstyle 1};
	\endxy}\otimes \raisebox{-.1mm}{\xy 0;/r.3pc/:
	(3,6)*{};(0,2)*{}**\dir{-};
	(-3,6)*{};(0,2)*{}**\dir{-};
	(1.5,4)*{};(0,6)*{}**\dir{-};
	(0,0)*{};(0,2)*{}**\dir{-};
	(1.5,1.5)*{\scriptstyle 2};
	(3,3.5)*{\scriptstyle 3};
	\endxy}
+\raisebox{-.1mm}{\xy 0;/r.3pc/:
	(1.5,4)*{};(0,2)*{}**\dir{-};
	(-1.5,4)*{};(0,2)*{}**\dir{-};
	(0,0)*{};(0,2)*{}**\dir{-};(1.5,1.5)*{\scriptstyle 3};
	\endxy}\otimes\raisebox{-.1mm}{\xy 0;/r.3pc/:
	(3,6)*{};(0,2)*{}**\dir{-};
	(-3,6)*{};(0,2)*{}**\dir{-};
	(-1.5,4)*{};(0,6)*{}**\dir{-};
	(0,0)*{};(0,2)*{}**\dir{-};
	(1.5,1.5)*{\scriptstyle 2};
	(-3,3.5)*{\scriptstyle 1};
	\endxy}
+|\otimes\raisebox{-.1mm}{\xy 0;/r.3pc/:
	(3,6)*{};(0,2)*{}**\dir{-};
	(-3,6)*{};(0,2)*{}**\dir{-};
	(1.5,4)*{};(.5,6)*{}**\dir{-};
	(-1.5,4)*{};(-.5,6)*{}**\dir{-};
	(0,0)*{};(0,2)*{}**\dir{-};
	(1.5,1.5)*{\scriptstyle 2};
	(-3,3.5)*{\scriptstyle 1};
	(3,3.5)*{\scriptstyle 3};
	\endxy}.
\end{split}\]

Now for any $Y(v_1,\dots,v_n)\in\bar{B}(V)$ and $P\in\calf_Y$, let $d_P$ be the common vertex degree of the terms in $P^*$. By Eq.~\meqref{lrco1} and the canonical order~\meqref{eq:yset} there is clearly a unique $w_P\in\frakS_{d_P,n-d_P}$
such that
\[\Delta_{BT}(Y)=\sum_{P\in\calf_Y}P^*(v_{w_P(1)},\dots,v_{w_P(d_P)})\otimes(Y/P)(v_{w_P(d_P+1)},\dots,v_{w_P(n)}).\]
We then obtain the formula
\begin{equation}\mlabel{blrco}
\Delta'_{BT}(Y)=\sum_{P\in\calf_Y}(P^*\otimes(Y/P))T^\sigma_{w_P^{-1}}(v_1\otimes\cdots\otimes v_n),
\end{equation}
for any $Y(v_1,\dots,v_n)\in\bar{\calb}_n(V),v_1,\dots,v_n\in V,\,n\geq1$. To finish this section, we show that $(\caly(V),*_\sigma)$ is a free associative algebra as an analogue of \cite[Theorem 3.8]{LR1}.
\begin{theorem}\mlabel{fabt}
Given a braided vector space $(V,\sigma)$ with a $\bk$-basis $E=\{v_x\}_{x\in X}$, the algebra $(\caly(V),*_\sigma)$ is freely generated by the set $\calw(E):=\sqcup_{n\geq1}\calw_n\times E^n$, where $$\calw_n:=\{\,|\vee Y'\in\calb_n\,|\,Y'\in\calb_{n-1}\}, \quad n\geq1.$$
\end{theorem}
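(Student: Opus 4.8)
The plan is to mimic Loday–Ronco's original argument \cite[Theorem 3.8]{LR1} that the classical $(\caly(V),*)$ is free associative on the trees of the form $|\vee Y'$, and check that the braiding plays no essential role beyond bookkeeping. The key structural fact is that every $Y\in\calb_n$ with $n\geq 1$ has a \emph{unique} decomposition as an iterated right product under $*_\sigma$ of ``left combs at the root'', i.e. of trees in $\calw = \sqcup_n \calw_n$. Concretely, writing $Y = Y_1\vee_{v_i} Y_2$ at the root, one has $Y = (|\vee_{v_i} Y_1) *_\sigma (\text{something built from } Y_2)$ plus strictly lower-degree-in-the-second-factor corrections; iterating peels off one $\calw$-generator at a time.

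\medskip

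\textbf{First} I would make precise the recursive ``peeling'' map. For $Y(v_1,\dots,v_n)\in\bar\calb_n(V)$ with root decomposition $Y = Y_1\vee_{v_r} Y_2$ where $r = d_v(Y_1)+1$ (so the root vertex carries $v_r$ in the canonical order), I claim
\[
Y = \bigl(\,|\vee_{v_r}(\text{reindexed }Y_1)\bigr) *_\sigma (\text{reindexed }Y_2) \;+\; (\text{terms of the form } Z *_\sigma W \text{ with } d_v(W) < d_v(Y_2)),
\]
which follows by unwinding $Y_1 \vee_{v_r} Y_2 = (Y_1 *_\sigma |) \vee_{v_r} Y_2$ and the defining recursion $Z *_\sigma W = Z_1\vee(Z_2*_\sigma W) + (Z*_\sigma W_1)\vee W_2$. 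The leading term is exactly one $\calw$-generator times a strictly smaller tree. By induction on $d_v$, this gives a filtration-compatible bijection: if we order monomials in the free algebra $\bk\langle \calw(E)\rangle$ first by total degree $\sum d_v$ and then lexicographically, the map $\bk\langle\calw(E)\rangle \to \caly(V)$ sending a word $w_1\cdots w_k$ (each $w_i\in\calw$, decorated by basis elements of $E$) to $w_1 *_\sigma \cdots *_\sigma w_k$ is ``upper triangular'' with identity diagonal, hence bijective. Surjectivity is the peeling recursion; injectivity follows because the leading term of $w_1*_\sigma\cdots*_\sigma w_k$ determines $w_1$ (as the unique $\calw$-generator at the root) and then, inductively, the rest.

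\medskip

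\textbf{The cleanest route}, though, avoids re-proving the combinatorics: one invokes the classical Loday–Ronco freeness of $(\caly(V),*)$ as an associative algebra on $\calw(E)$ \cite[Theorem 3.8]{LR1} — which is a statement about $\bk$-modules and the \emph{associative} product $*$, and $*_\sigma = *$ as a map $\caly(V)^{\otimes 2}\to\caly(V)$ does not depend on $\sigma$ at all (only the braiding $\sigma_{BT}$ and the dendriform splitting into $\prec_\sigma,\succ_\sigma$ are ``enriched'' notationally; the underlying associative multiplication is unchanged). Since $\dim$ and the product structure are literally the same, $(\caly(V),*_\sigma)$ is free associative on $\calw(E)$ by transport. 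So the proof reduces to the single sentence: the associative algebra $(\caly(V),*_\sigma)$ equals $(\caly(V),*)$, which is free on $\calw(E)$ by \cite[Theorem 3.8]{LR1}. One should, however, double-check that the cardinality bookkeeping matches: $|\calw_n| = |\calb_{n-1}| = C_{n-1}$, and the generating function identity $\sum_n C_n t^n = 1/(1 - \sum_{n\geq 1} C_{n-1} t^n)$ — equivalently $C(t) = 1 + tC(t)^2$ — confirms the free-associative Hilbert series matches $\sum_n C_n (\dim V)^n t^n$, so no generator is missed.

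\medskip

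\textbf{The main obstacle} is essentially presentational: deciding whether to present the self-contained peeling induction or the one-line reduction. The one-line reduction is airtight only if one has already observed (as the excerpt does, by writing ``$*_\sigma := \prec_\sigma + \succ_\sigma$'' with the \emph{same} underlying recursion \eqref{eq:dendi}--\eqref{eq:dendii}) that $*_\sigma$ and $*$ coincide as operations on $\caly(V)$; the subscript $\sigma$ is carried only to track that these operations now live in a braided category. Granting that, there is genuinely nothing left to prove. If the referee wants the peeling argument spelled out, the only mild subtlety is the reindexing of decorations under the canonical order when one writes $Y_1\vee_{v_r}Y_2$ — the leaves of the subtrees get relabelled — but this is exactly the bookkeeping already encoded in the unshuffles $\frakS_{i,j}$ and causes no difficulty.
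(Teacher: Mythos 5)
Your ``cleanest route'' is in substance the paper's own proof: the authors likewise exploit that $*_\sigma$ does not actually involve $\sigma$, prove surjectivity of the map $T(\bk\calw(E))\rar\caly(V)$ by an explicit tree identity, and obtain injectivity from the dimension count $\dim T(\bk\calw)_n=C_n=|\calb_n|$ of \cite[Theorem 3.8]{LR1} together with $T(\bk\calw(E))_n\cong T(\bk\calw)_n\otimes V^{\otimes n}$ --- which is exactly your Hilbert-series check. One thing you should state rather than gesture at: the decorated case transports from the undecorated one precisely because the product $*$ preserves the canonical order of the decorations (no permutation of the $v_i$ occurs under multiplication); the unshuffles $\frakS_{i,j}$ enter only in the coproduct. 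Your closing remark that the decorations ``get relabelled'' under the root decomposition therefore points at a difficulty that is not there, and conversely is the fact that makes the one-line reduction legitimate.

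The self-contained ``peeling'' identity you offer as the first route, however, is false as stated. You claim $Y=Y_1\vee_{v_r}Y_2$ equals $(\,|\vee_{v_r}Y_1)*_\sigma(\text{reindexed }Y_2)$ plus corrections whose second factor has degree less than $d_v(Y_2)$. Take $Y=Y[x]\vee_y\,|$\,: then $Y_2=|$, there can be no correction terms, and your identity would force $Y[x]\vee_y\,|=|\vee_y Y[x]$, which is a different tree. The generator must be peeled off on the \emph{right}, not the left: by Eq.~\meqref{eq:dendii} one has $Y_1\succ_\sigma(\,|\vee_vY_2)=(Y_1*_\sigma|\,)\vee_vY_2=Y$, hence
\begin{equation*}
Y=Y_1*_\sigma(\,|\vee_vY_2)-Y_1\prec_\sigma(\,|\vee_vY_2)=Y_1*_\sigma(\,|\vee_vY_2)-Y_{11}\vee_u\bigl(Y_{12}*_\sigma(\,|\vee_vY_2)\bigr),
\end{equation*}
where $Y_1=Y_{11}\vee_uY_{12}$, and the induction runs on the degree of the \emph{left} branch at the root (the subtracted trees have left branch $Y_{11}$ of strictly smaller degree), not on $d_v(Y_2)$. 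This is Eq.~\meqref{eq:gen} in the paper; with that replacement your peeling route becomes the paper's surjectivity argument, but as written it would not go through.
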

\begin{proof}
Note that $\bk\calw(E)$ is a braided vector subspace of $\caly(V)$. Thus there is a braided algebra map
\[f:T(\bk\calw(E))\rar\caly(V),\,Y\mapsto Y,\,Y\in\calw(E),\]
by the freeness of the tensor algebra. First we show that $f$ is surjective. Let $Y=Y_1\vee_v Y_2$. If $Y_1=|\,$, then $Y\in\bk\calw(E)$. Otherwise, let $Y_1=Y_{11}\vee_u Y_{12}$ with $Y_{11}, Y_{12}\in \calw(E)$. Then
\begin{equation}\mlabel{eq:gen}
Y=Y_1\vee_v Y_2=
(Y_1\succ_\sigma(\,|\vee_v Y_2))\\
=Y_1*_\sigma(\,|\vee_v Y_2)-Y_{11}\vee_u(Y_{12}*_\sigma(\,|\vee_v Y_2)).
\end{equation}
Since $Y_{11}$ has degree less then that of $Y_1$, by the induction hypothesis on $d_v(Y_1)$, we see that $Y_1$ and $Y_{11}\vee_u(Y_{12}*_\sigma(\,|\vee Y_2))$ are in the subalgebra of $\caly(V)$ generated by $\calw(E)$. Thus $Y$ is also in this subalgebra. Since $Y$ is arbitrary, we see that $f$ is surjective.

On the other hand, it has been shown in~\cite[Theorem 3.8]{LR1} that $\dim T(\bk\calw)_n=C_n=|\calb_n|$\,, while we also have $T(\bk\calw(E))_n\cong T(\bk\calw)_n\otimes V^{\otimes n}$. Thus $f$ is also injective, mapping a linear basis of $T(\bk\calw(E))$ to one in $\caly(V)$.
\end{proof}	

\section{Comparison isomorphism between braided Hopf algebras of rooted trees}
\mlabel{sec:comp}

Holtkamp~\mcite{Hol} gave a canonical isomorphism between the Foissy-Holtkamp Hopf algebra of planar rooted trees, as the noncommutative variation of the Connes-Kreimer Hopf algebra of (nonplanar) rooted trees, and the Loday-Ronco Hopf algebra of planar binary rooted trees. With the braided version of the Loday-Ronco Hopf algebra constructed in the previous section, it is natural to find a braided Hopf algebra of planar rooted trees that is isomorphic to the braided Hopf algebra of planar binary trees. In the work~\mcite{Fo3,Fo4}, Foissy gave a braided Hopf algebra of planar rooted trees based on certain linear ordering of the vertices of the rooted trees. Here we provide another braiding based on a different linear ordering for which the desired isomorphism of braided Hopf algebras can be established (Theorem~\mref{thm:iso}).

\subsection{Braided operated Hopf algebras of planar rooted trees}
\mlabel{ss:planar}

Let $\calt(X)$ be the set of planar rooted trees with vertices decorated by elements in $X$ and $\calf(X)=M(\calt(X))$ be the free monoid generated by $\calt(X)$ representing the set of planar rooted forests as concatenations of rooted trees.
Similar to (but different from) the binary case, define the {\bf (vertex) degree} $d_v(F)$ of $F\in \calf(X)$ to be the number of its vertices. For $n\geq 0$, let $\calf_n(X)$ be the subset of $\calf(X)$ consisting of forests with $n$ vertices. Similarly, we denote $\calt_n(X)$ for trees with $n$ vertices. In particular, $\calt_0(X)=\calf_0(X)=\{\emptyset\}$ with $\emptyset$ being the empty tree, and $\calt_1(X)=\calf_1(X)=\{\,\text{\raisebox{2pt}{$\sbu_x$}}\,|\,x\in X\}$.

When $X$ is a singleton, we abbreviate $\calt(X)$ (resp. $\calf(X)$) as $\calt$ (resp. $\calf$) consisting of planar rooted trees (resp. forests) with the same, and therefore free of, decoration. Then $|\calt_n|$ is the $(n-1)$-th Catalan number $C_{n-1}=\tfrac{1}{n}{2(n-1)\choose n-1},\,n\geq1$~\cite[Exercise 6.19. e.]{Sta}. Further, $|\calf_n|=|\calt_{n+1}|=C_n,\,n\geq0$, since there is a one-to-one correspondence between $\calf_n$ and $\calt_{n+1}$ sending $F\in \calf_n$ to its grafted tree $B^+(F)\in\calt_{n+1}$. Here $B^+$ is the grafting operator.

For the braiding consideration next, we need to fix a well-order on the set of vertices of every forest $F\in\calf$. One of the choices is given by doing a depth-first (preorder) search through trees in $F$ from left to right and numbering the occurring vertices successively. We called this the {\bf canonical order} for planar rooted forests. Then we can denote
$F(x_1,\dots,x_n)\in\calf_n(X)$
for the forest $F$ with its canonically ordered vertices decorated by $x_1,\dots,x_n\in X$ 
, and have the identification
\begin{equation}
\calf_n (X)= \calf_n\times X^n,\, F(x_1,\cdots,x_n) \leftrightarrow (F,(x_1,\cdots,x_n)), \quad F\in \calf_n, x_1,\cdots, x_n\in X.
\mlabel{eq:ftree}
\end{equation}
If there is no danger of confusion, we also use $F$ to denote $F(x_1,\dots,x_n)$ for brevity.

For example, the preorder of one $F\in \calf_7$ and the corresponding decorated forest are as follows.
\[F=\raisebox{-.1mm}{\xy 0;/r.3pc/:
	(-3,0)*{};(-3,2)*{}**\dir{-};
	(1.5,2)*{};(0,4)*{}**\dir{-};
	(1.5,2)*{};(3,4)*{}**\dir{-};
	(1.5,2)*{};(0,0)*{}**\dir{-};
	(-1.5,2)*{};(0,0)*{}**\dir{-};
	 (-1.5,2)*{\sbu};(1.5,2)*{\sbu};(0,4)*{\sbu};(3,4)*{\sbu};(0,0)*{\sbu};(-3,0)*{\sbu};(-3,2)*{\sbu};
	(-4.5,-.5)*{\scriptstyle 1};(-4.5,1.5)*{\scriptstyle 2};(1.5,-.5)*{\scriptstyle 3};(-1.5,3.7)*{\scriptstyle 4};(3,1.5)*{\scriptstyle 5};(0,5.7)*{\scriptstyle 6};(3,5.7)*{\scriptstyle 7};
	\endxy},\qquad
F(x_1,\dots,x_7)=\raisebox{-.1mm}{\xy 0;/r.3pc/:
	(-3,0)*{};(-3,2)*{}**\dir{-};
	(1.5,2)*{};(0,4)*{}**\dir{-};
	(1.5,2)*{};(3,4)*{}**\dir{-};
	(1.5,2)*{};(0,0)*{}**\dir{-};
	(-1.5,2)*{};(0,0)*{}**\dir{-};
	 (-1.5,2)*{\sbu};(1.5,2)*{\sbu};(0,4)*{\sbu};(3,4)*{\sbu};(0,0)*{\sbu};(-3,0)*{\sbu};(-3,2)*{\sbu};
	(-4.7,-.5)*{\scriptstyle x_1};(-4.7,1.5)*{\scriptstyle x_2};(1.7,-.5)*{\scriptstyle x_3};(-1.5,3.7)*{\scriptstyle x_4};(3.2,1.5)*{\scriptstyle x_5};(0,5.7)*{\scriptstyle x_6};(3,5.7)*{\scriptstyle x_7};
	\endxy}\]
\indent For a planar rooted tree $T$, a {\bf subforest} $F$ of $T$ is the forest consisting of a subset of vertices of $T$ together with their descents and edges containing all these involved vertices. Let $\calf_T$ be the set of subforests of $T$, including the empty tree $\emptyset$ and the total $T$.

\smallskip
A Hopf algebra structure on planar rooted trees was established by Foissy and Holtkamp~\mcite{Fo4,Hol} as a noncommutative variation of the Hopf algebra on (nonplanar) rooted trees from the Connes-Kreimer approach of the renormalization of perturbative quantum field theory~\mcite{CK,Kre}.
\begin{defn}
The {\bf noncommutative Connes-Kreimer Hopf algebra} (a.k.a {\bf Foissy-Holtkamp Hopf algebra})  $\calh_{RT}:=\calh_{RT}(X):=\bk\calf(X)$ is the free $\bk$-algebra generated by the set $\calt(X)$ of planar rooted trees decorated by $X$, with the coproduct $\Delta_{RT}$ defined by
\begin{equation}\mlabel{ckco}
\Delta_{RT}(T)=\sum_{F\in\calf_T}F\otimes(T/F),\,T\in\calt(X),
\end{equation}
and counit $\varepsilon_{RT}(T)=\delta_{T,\emptyset}$, where $T/F$ is the tree obtained from $T$ after removing a subforest $F$ and all edges connecting $F$ to the rest of $T$. Here the decorations on vertices of $T$ remain at their corresponding positions in $F$ and $T/F$.
\mlabel{nck}
\end{defn}

For example, if $T=\raisebox{-.1mm}{\xy 0;/r.3pc/:
	(1.5,2)*{};(0,4)*{}**\dir{-};
	(1.5,2)*{};(3,4)*{}**\dir{-};
	(1.5,2)*{};(0,0)*{}**\dir{-};
	(-1.5,2)*{};(0,0)*{}**\dir{-};
	 (-1.5,2)*{\sbu};(1.5,2)*{\sbu};(0,4)*{\sbu};(3,4)*{\sbu};(0,0)*{\sbu};
	(1.7,-.5)*{\scriptstyle x_1};(-1.5,3.7)*{\scriptstyle x_2};(3.2,1.5)*{\scriptstyle x_3};(0,5.7)*{\scriptstyle x_4};(3,5.7)*{\scriptstyle x_5};
	\endxy}$, then
\begin{equation}\mlabel{ex:co}
\begin{split}
\Delta_{RT}(T)&=\raisebox{-.1mm}{\xy 0;/r.3pc/:
	(1.5,2)*{};(0,4)*{}**\dir{-};
	(1.5,2)*{};(3,4)*{}**\dir{-};
	(1.5,2)*{};(0,0)*{}**\dir{-};
	(-1.5,2)*{};(0,0)*{}**\dir{-};
	 (-1.5,2)*{\sbu};(1.5,2)*{\sbu};(0,4)*{\sbu};(3,4)*{\sbu};(0,0)*{\sbu};
	(1.7,-.5)*{\scriptstyle x_1};(-1.5,3.7)*{\scriptstyle x_2};(3.2,1.5)*{\scriptstyle x_3};(0,5.7)*{\scriptstyle x_4};(3,5.7)*{\scriptstyle x_5};
	\endxy}\otimes 1
+\raisebox{-.1mm}{\xy 0;/r.3pc/:
	(1.5,2)*{};(0,0)*{}**\dir{-};
	(-1.5,2)*{};(0,0)*{}**\dir{-};
	(-1.5,2)*{\sbu};(1.5,2)*{\sbu};(0,0)*{\sbu};(-3,0)*{\sbu};
	(-3,1.7)*{\scriptstyle x_2};(1.7,-.5)*{\scriptstyle x_3};(-1.5,3.7)*{\scriptstyle x_4};(1.5,3.7)*{\scriptstyle x_5};
	\endxy}\otimes\sbu_{x_1}
+\raisebox{-.1mm}{\xy 0;/r.3pc/:
	(1.5,2)*{};(0,0)*{}**\dir{-};
	(-1.5,2)*{};(0,0)*{}**\dir{-};
	(-1.5,2)*{\sbu};(1.5,2)*{\sbu};(0,0)*{\sbu};
	(1.7,-.5)*{\scriptstyle x_3};(-1.5,3.7)*{\scriptstyle x_4};(1.5,3.7)*{\scriptstyle x_5};
	\endxy}\otimes
\raisebox{-.1mm}{\xy 0;/r.3pc/:
	(0,2)*{};(0,0)*{}**\dir{-};
	(0,2)*{\sbu};(0,0)*{\sbu};
	(1.5,-.5)*{\scriptstyle x_1};(1.5,1.5)*{\scriptstyle x_2};
	\endxy}
+\sbu_{x_2}\sbu_{x_4}\sbu_{x_5}\otimes\raisebox{-.1mm}{\xy 0;/r.3pc/:
	(0,2)*{};(0,0)*{}**\dir{-};
	(0,2)*{\sbu};(0,0)*{\sbu};
	(1.5,-.5)*{\scriptstyle x_1};(1.5,1.5)*{\scriptstyle x_3};
	\endxy}
+\sbu_{x_4}\sbu_{x_5}\otimes\raisebox{-.1mm}{\xy 0;/r.3pc/:
	(1.5,2)*{};(0,0)*{}**\dir{-};
	(-1.5,2)*{};(0,0)*{}**\dir{-};
	(-1.5,2)*{\sbu};(1.5,2)*{\sbu};(0,0)*{\sbu};
	(1.7,-.5)*{\scriptstyle x_1};(-1.5,3.7)*{\scriptstyle x_2};(1.5,3.7)*{\scriptstyle x_3};
	\endxy}\\
&\quad+\sbu_{x_2}\sbu_{x_4}\otimes\raisebox{-.1mm}{\xy 0;/r.3pc/:
	(0,2)*{};(0,0)*{}**\dir{-};
	(0,4)*{};(0,2)*{}**\dir{-};
	(0,4)*{\sbu};(0,2)*{\sbu};(0,0)*{\sbu};
	(1.5,-.5)*{\scriptstyle x_1};(1.5,1.5)*{\scriptstyle x_3};(1.5,3.5)*{\scriptstyle x_5};
	\endxy}
+\sbu_{x_2}\sbu_{x_5}\otimes \raisebox{-.1mm}{\xy 0;/r.3pc/:
	(0,2)*{};(0,0)*{}**\dir{-};
	(0,4)*{};(0,2)*{}**\dir{-};
	(0,4)*{\sbu};(0,2)*{\sbu};(0,0)*{\sbu};
	(1.5,-.5)*{\scriptstyle x_1};(1.5,1.5)*{\scriptstyle x_3};(1.5,3.5)*{\scriptstyle x_4};
	\endxy}
+\sbu_{x_2}\otimes\raisebox{-.1mm}{\xy 0;/r.3pc/:
	(0,2)*{};(-1.5,4)*{}**\dir{-};
	(0,2)*{};(1.5,4)*{}**\dir{-};
	(0,2)*{};(0,0)*{}**\dir{-};
	(0,2)*{\sbu};(-1.5,4)*{\sbu};(1.5,4)*{\sbu};(0,0)*{\sbu};
	(1.7,-.5)*{\scriptstyle x_1};(1.7,1.5)*{\scriptstyle x_3};(-1.5,5.7)*{\scriptstyle x_4};(1.5,5.7)*{\scriptstyle x_5};
	\endxy}
+\sbu_{x_4}\otimes\raisebox{-.1mm}{\xy 0;/r.3pc/:
	(1.5,2)*{};(1.5,4)*{}**\dir{-};
	(1.5,2)*{};(0,0)*{}**\dir{-};
	(-1.5,2)*{};(0,0)*{}**\dir{-};
	(-1.5,2)*{\sbu};(1.5,2)*{\sbu};(1.5,4)*{\sbu};(0,0)*{\sbu};
	(1.7,-.5)*{\scriptstyle x_1};(-1.5,3.7)*{\scriptstyle x_2};
	(3,1.5)*{\scriptstyle x_3};(3,3.5)*{\scriptstyle x_5};
	\endxy}
+\sbu_{x_5}\otimes\raisebox{-.1mm}{\xy 0;/r.3pc/:
	(1.5,2)*{};(1.5,4)*{}**\dir{-};
	(1.5,2)*{};(0,0)*{}**\dir{-};
	(-1.5,2)*{};(0,0)*{}**\dir{-};
	(-1.5,2)*{\sbu};(1.5,2)*{\sbu};(1.5,4)*{\sbu};(0,0)*{\sbu};
	(1.7,-.5)*{\scriptstyle x_1};(-1.5,3.7)*{\scriptstyle x_2};
	(3,1.5)*{\scriptstyle x_3};(3,3.5)*{\scriptstyle x_4};
	\endxy}
+1\otimes \raisebox{-.1mm}{\xy 0;/r.3pc/:
	(1.5,2)*{};(0,4)*{}**\dir{-};
	(1.5,2)*{};(3,4)*{}**\dir{-};
	(1.5,2)*{};(0,0)*{}**\dir{-};
	(-1.5,2)*{};(0,0)*{}**\dir{-};
	 (-1.5,2)*{\sbu};(1.5,2)*{\sbu};(0,4)*{\sbu};(3,4)*{\sbu};(0,0)*{\sbu};
	(1.7,-.5)*{\scriptstyle x_1};(-1.5,3.7)*{\scriptstyle x_2};(3.2,1.5)*{\scriptstyle x_3};(0,5.7)*{\scriptstyle x_4};(3,5.7)*{\scriptstyle x_5};
	\endxy}.
\end{split}
\end{equation}

As in the non-ordered case, it is easy to check that
$\calh_{RT}:=\bigoplus_{n\geq 0}\calh_{RT,\,n}:=\bigoplus_{n\geq 0}\bk\calf_n(X)$
is a graded connected Hopf algebra with respect to the grading defined by the degree $d_v$. In particular, $\calh_{RT,\,0}=\bk \emptyset$ is spanned by the empty tree $\emptyset$, and $\dim\calh_{RT,\,n}=C_n|X|^n,\,n\geq1$, when $|X|<\infty$.

Given $x\in X$, one can also define a grafting operator $B^+_x$ on
$\calh_{RT}$ by grafting a decorated forest $F\in \calf(X)$ to a decorated rooted tree $B^+_x(F)$ with the root decorated by $x$.
Then the grafting operator satisfies the following {\bf 1-cocycle} condition
\begin{equation}\mlabel{cocy}
\Delta_{RT}B^+_x=B^+_x\otimes1+(\Id_{\calh_{RT}}\otimes B^+_x)\Delta_{RT}.
\end{equation}

For the braided Hopf algebra of planar rooted trees, we first introduce some notations, referring to the construction of Foissy in \mcite{Fo3,Fo4}.
Like the notation $\caly(V)$ for a vector space $V$, define
\[\calr(V):=\bigoplus_{n\geq0}\calr_n(V):=\bigoplus_{n\geq0}\bk\calf_n\otimes V^{\otimes n},\]
with $\calr_0(V)=\bk$ by convention.

Also, we can identify $F\ot(v_1\ot\cdots\ot v_n)$ with $F(v_1,\dots,v_n)$ via the canonical order, and denote  \[\bar{\calf}_n(V):=\{F(v_1,\dots,v_n)\,|\,F\in\calf_n,v_1,\dots,v_n\in V\}\]
for any $n\geq1$ and $\bar{\calf}(V):=\sqcup_{n\geq0}\bar{\calf}_n(V)$ with $\bar{\calf}_0(V)=\{\emptyset\}$.
In particular, let $\bar{\calt}_n(V)$ be the subset of $\bar{\calf}_n(V)$ only consisting of trees $T(v_1,\dots,v_n)$, and $\bar{\calt}(V):=\sqcup_{n\geq0}\bar{\calt}_n(V)$ with $\bar{\calt}_0(V)=\{\emptyset\}$.

Note that the concatenation product $\cdot$, the coproduct $\Delta_{RT}$ and the counit $\varepsilon_{RT}$ of $\calh_{RT}$ can be linearly extended on $\calr(V)$ to obtain the induced Hopf algebra structure $(\calr(V),\cdot,\Delta_{RT},\varepsilon_{RT})$.

\medskip
Now if $(V,\sigma)$ is a braided vector space,
by \cite[Proposition 17]{Fo3} the braiding $\sigma$ on $V$ induces a braiding $\sigma_{RT}$ on $\calr(V)$ defined by
\[\sigma_{RT}(F(v_1,\dots,v_m)\otimes G(v_{m+1},\dots,v_{m+n}))=(G\otimes F)\beta_{mn}(v_1\otimes\cdots\otimes v_{m+n})\]
for any $F\in\calf_m,G\in\calf_n$ and $v_1,\dots,v_{m+n}\in V$, where we similarly interpret any $F\in \calf_m$ as a linear function from $V^{\ot m}$ to $\calr(V)$ with
\[F\left(\sum c_{i_1,\dots,i_m}v_{i_1}\ot \cdots \ot v_{i_m}\right)=\sum c_{i_1,\dots,i_m}F(v_{i_1},\dots,v_{i_m}),\,c_{i_1,\dots,i_m}\in\bk.\]
Also, we have the linear map
\[\sigma_{V,\calr(V)}:V\otimes\calr(V)\rar \calr(V)\otimes V,\,v\otimes F(v_1,\dots,v_m)\mapsto (F\otimes\Id_V)\beta_{1,\,m}(v\otimes v_1\otimes\cdots\otimes v_m)\]
for any $F\in\calf_m$ and $v,v_1,\dots,v_m\in V$. By symmetry, we define $\sigma_{\calr(V),V}:\calr(V)\otimes V\rar V\otimes\calr(V)$.

On the other hand, according to Eq.~\meqref{ckco} and the canonical order in Eq.~\meqref{eq:ftree}, for any $T(v_1,\dots,v_n)\in\bar{\calt}_n(V)$, every $F\in\calf_T$ with $d_F:=d_v(F)$ determines a unique $w_F\in\frakS_{d_F,n-d_F}$ such that
\[\Delta_{RT}(T)=\sum_{F\in\calf_T}F(v_{w_F(1)},\dots,v_{w_F(d_F)})\otimes(T/F)(v_{w_F(d_F+1)},\dots,v_{w_F(n)});\]
see also the example in Eq.~\meqref{ex:co} as an illustration. Hence we can define a linear map $\Delta'_{RT}:\calr(V)\rar\calr(V)\otimes\calr(V)$ as follows.
For any $T(v_1,\dots,v_n)\in\bar{\calt}_n(V),\,n\geq1$, define
\begin{equation}\mlabel{bckco}
\Delta'_{RT}(T)=\sum_{F\in\calf_T}(F\otimes(T/F))T^\sigma_{w_F^{-1}}(v_1\otimes\cdots\otimes v_n);
\end{equation}
while for any $F\in\bar{\calf}(V)$, we define it by induction on the number of trees in $F$,  namely,
\[\Delta'_{RT}(F)=(\Delta'_{RT}\otimes\Delta'_{RT})\sigma_{RT}(T\otimes F')\]
when $F=T\cdot F'$ is the concatenation of  $T\in\bar{\calt}(V)$ and $F'\in\bar{\calf}(V)$, and then extend it to $\calr(V)$ linearly. Also, define a linear map $\varepsilon'_{RT}:\calr(V)\rar\bk$ by $\varepsilon'_{RT}(F)=\delta_{F,\emptyset}$. As a result, the quintuple $(\calr(V),\cdot\,,\Delta'_{RT},\varepsilon'_{RT},\sigma_{RT})$ is a braided Hopf algebra. See~\cite[Theorem 19]{Fo3}.

\medskip

Let $B^+:V\otimes\calr(V)\rar \calr(V)$ be the linear map defined by $B^+(v\otimes F):=B^+_v(F)$ for any $v\in V, F\in\bar{\calf}(V)$.
For simplicity, we denote $B^+(v\otimes F)$ by $B^+_v(F)$. So $(\calr(V),B^+)$ is an $V$-algebra over $V$ in the sense of~\mcite{Fo4} or a $V$-operated algebra in the sense of~\mcite{Gu2}. Moreover, $B^+$ satisfies the following {\bf twisted 1-cocycle} condition analogous to Eq.~\meqref{cocy},
\begin{equation}\mlabel{bcoc}
\Delta'_{RT}B^+=B^+\otimes 1+(\Id_{\calr(V)}\otimes B^+)(\sigma_{V,\calr(V)})_1(\Id_V\otimes \Delta'_{RT}),
\end{equation}
and the property below.

\begin{lemma}
For any $v\in V,\,F,F'\in\bar{\calf}(V)$, we have
\begin{align}
&\mlabel{fv1}\sigma_{RT}(B^+_v(F)\otimes F')=(\Id_{\calr(V)}\otimes B^+)(\sigma_{V,\calr(V)})_1(\sigma_{RT})_2(v\otimes F\otimes F'),\\
&\mlabel{fv2}\sigma_{RT}(F\otimes B^+_v(F'))=(B^+\otimes\Id_{\calr(V)})(\sigma_{RT})_2(\sigma_{\calr(V),V})_1(F\otimes v\otimes F').
\end{align}		
\end{lemma}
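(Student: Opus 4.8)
The plan is to prove both identities by a single direct unravelling of the two sides, using only the explicit definitions of $\sigma_{RT}$, $\sigma_{V,\calr(V)}$, $\sigma_{\calr(V),V}$, $B^+$ together with the factorization identities \meqref{beta} for the braiding $\beta$ on $T(V)$; no induction will be needed here. These are the planar-rooted-tree counterparts of \meqref{tv1}--\meqref{tv2}, and they should in fact be simpler, since $B^+_v(F)$ involves a single subforest $F$ with one new root decoration $v$ rather than two subtrees, so only two braiding operators appear on each right-hand side.

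First I would record the one structural fact that makes everything run: how grafting interacts with the canonical preorder. If $F=F(v_1,\dots,v_m)\in\bar{\calf}_m(V)$, then a depth-first search of $B^+_v(F)$ meets the new root before all vertices of $F$, so $B^+_v(F)=B^+_v(F)(v,v_1,\dots,v_m)$; equivalently, viewed as a $\bk$-linear function of its $m+1$ decoration slots in the sense of the identification of forests with linear maps $V^{\otimes m}\to\calr(V)$ used just before the lemma, one has $B^+_v(F)=B^+\circ(\Id_V\otimes F)$. With this in hand, fix homogeneous $F\in\bar{\calf}_m(V)$, $F'\in\bar{\calf}_n(V)$ and $v\in V$. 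For \meqref{fv1} I would expand the left side by the definition of $\sigma_{RT}$ into $\bigl(F'\otimes(B^+(\Id_V\otimes F))\bigr)\,\beta_{m+1,n}$ applied to the decoration string $v\otimes v_1\otimes\cdots\otimes v_{m+n}$, and then apply the first identity in \meqref{beta} with $(m,n,k)\mapsto(1,m,n)$ to split $\beta_{1+m,n}=(\beta_{1,n}\otimes\Id_V^{\otimes m})(\Id_V\otimes\beta_{m,n})$. The inner factor $\Id_V\otimes\beta_{m,n}$ is exactly the leg-braiding performed by $(\sigma_{RT})_2$ on $F\otimes F'$ while the $v$-leg is untouched; the factor $\beta_{1,n}\otimes\Id_V^{\otimes m}$ is the one performed by $(\sigma_{V,\calr(V)})_1$, moving the $v$-leg past the $n$ legs now carried by $F'$; and the outer map $F'\otimes(B^+(\Id_V\otimes F))$ is precisely $\Id_{\calr(V)}\otimes B^+$ together with routing the first $n$ legs into $F'$ and the last $m$ into $F$. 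Reading these three stages in order produces the right side of \meqref{fv1}.

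For \meqref{fv2} the computation is the mirror image: $B^+_v(F')$ places its root leg at the front of the right-hand block, so I would expand $\sigma_{RT}(F\otimes B^+_v(F'))=(B^+(\Id_V\otimes F')\otimes F)\,\beta_{m,1+n}$ and use the second identity in \meqref{beta} with $(m,n,k)\mapsto(m,1,n)$, namely $\beta_{m,1+n}=(\Id_V\otimes\beta_{m,n})(\beta_{m,1}\otimes\Id_V^{\otimes n})$; the factor $\beta_{m,1}\otimes\Id_V^{\otimes n}$ is the one occurring in $(\sigma_{\calr(V),V})_1$, the factor $\Id_V\otimes\beta_{m,n}$ is the one in $(\sigma_{RT})_2$, and the outermost map reassembles as $B^+\otimes\Id_{\calr(V)}$. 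The main obstacle in both cases is purely combinatorial bookkeeping: keeping straight, among the $m+1+n$ tensor legs, which one each indexed operator $(\sigma_{RT})_i$, $(\sigma_{V,\calr(V)})_i$, $(\sigma_{\calr(V),V})_i$ acts on, and checking that the leg-permutation underlying $\beta_{m+1,n}$ (resp.\ $\beta_{m,1+n}$) matches the composite on the right leg-for-leg. Since $\sigma_{RT}$, $\sigma_{V,\calr(V)}$, $\sigma_{\calr(V),V}$ and the linearly extended $F,F'$ all depend only on the number and order of the decorated vertices, the verification is insensitive to the internal shapes of $F$ and $F'$ and to whether they are single trees or general forests, so one generic computation settles all cases.
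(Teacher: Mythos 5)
Your proof is correct and follows the same route as the paper, which simply asserts that the lemma holds "by the definitions of the grafting operator $B^+$ and braidings $\sigma_{RT},\,\sigma_{V,\calr(V)},\,\sigma_{\calr(V),V}$"; you have filled in exactly the intended details (the preorder placing the new root first, so $B^+_v(F)$ carries decorations $(v,v_1,\dots,v_m)$, plus the two factorizations of $\beta$ from \meqref{beta}).
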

\begin{proof}
It is by the definitions of the grafting operator $B^+$ and braidings $\sigma_{RT},\,\sigma_{V,\calr(V)},\,\sigma_{\calr(V),V}$.
\end{proof}

\begin{remark}
The braided  Hopf algebra $\calr(V)$ of planar rooted trees considered here is slightly different from the one defined in \mcite{Fo3,Fo4} denoted $\calh(V)$, as our canonical order on the set of vertices of a forest is not the one in~\cite[Sec. 2.2.2]{Fo4}. Indeed, our choice makes the resulting braided Hopf algebra isomorphic to the braided Hopf algebra $\caly(V)$ of planar binary trees shown below. See also Lemma \mref{ord} on the ordering.	
\mlabel{rk:foi}
\end{remark}

\subsection{Comparison of braided Hopf algebras on trees}
In \cite[Theorem 2.10]{Hol}, Holtkamp proved that there is a graded Hopf algebra isomorphism $\Theta:\calh_{BT}\rar\calh_{RT}$ defined recursively by
\begin{equation}\mlabel{theta}
\Theta(\,|\,)=1,\,\Theta(\,|\vee Y)=(B^+\circ\Theta)(Y),\,Y\in\calb.
\end{equation}
First we note that Holtkamp's map $\Theta$ preserves the canonical orders given in Eqs.~\meqref{eq:yset} and \meqref{eq:ftree} for vertices of trees in $\calh_{BT}$ and $\calh_{RT}$ respectively. More precisely,
\begin{lemma}
Let $Y\in\calb_n$ with its $n$ internal vertices canonically ordered.  Then $\Theta(Y)\in\calh_{RT}=\bk\calf$ is a sum of monomial terms as forests in which the $n$ vertices are also canonically ordered.
\mlabel{ord}
\end{lemma}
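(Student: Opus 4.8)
The plan is to prove the lemma by induction on $n=d_v(Y)$, in the following refined form: for every planar forest $F$ appearing in $\Theta(Y)$, the bijection between the $n$ internal vertices of $Y$ and the $n$ vertices of $F$ induced by the recursion~\meqref{theta} carries the canonical (in-order) order of $Y$ onto the canonical (preorder) order of $F$; equivalently, decorating the internal vertices of $Y$ by $v_1,\dots,v_n$ in canonical order and transporting the decorations along these bijections decorates every such $F$ in its own canonical order by $v_1,\dots,v_n$. I will work throughout with the decorated element $Y(v_1,\dots,v_n)$ of~\meqref{eq:yset} and carry the decorations through the argument. Two elementary observations drive the bookkeeping: (a) applying a grafting operator $B^+_v$ to a forest decorated in canonical order by $w_1,\dots,w_m$ yields the tree $B^+_v(F)$ decorated in canonical order by $v,w_1,\dots,w_m$ (the new root opens the preorder), and the concatenation of two forests decorated in canonical order concatenates their decoration strings; (b) the Loday-Ronco product on decorated planar binary trees respects the canonical order, i.e. $A(w_1,\dots,w_a)*B(w_{a+1},\dots,w_{a+b})=\sum_W W(w_1,\dots,w_{a+b})$ with no permutation of the decorations, which is a routine induction from~\meqref{eq:dendii} since $\vee$ glues canonical orders in the left--root--right pattern. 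The base cases $n\le1$ are immediate: $\Theta(\,|\,)=1$ and $\Theta(\,|\vee|\,)$ is the one-vertex tree.

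For the inductive step use the root decomposition $Y=Y_1\vee_x Y_2$. Because the in-order traversal of $Y$ visits the vertices of $Y_1$ in their canonical order, then $x$, then the vertices of $Y_2$ in their canonical order, we have $Y(v_1,\dots,v_n)=Y_1(v_1,\dots,v_p)\vee_{v_{p+1}}Y_2(v_{p+2},\dots,v_n)$ with $p=d_v(Y_1)$ and $x=v_{p+1}$. Run a secondary induction on $p$. If $p=0$, then $Y=\,|\vee_x Y_2$ and $\Theta(Y)=B^+_x(\Theta(Y_2))$ by~\meqref{theta}; the outer induction applies to $Y_2$ (of degree $n-1$), and observation (a) gives the claim for $Y$.

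If $p\ge 1$, write $Y_1=Y_{11}\vee_u Y_{12}$. By~\meqref{eq:dendii}, $Y=Y_1\succ(\,|\vee_x Y_2)$, and since $*=\prec+\succ$ and $Y_1\prec(\,|\vee_x Y_2)=Y_{11}\vee_u(Y_{12}*(\,|\vee_x Y_2))$ (again~\meqref{eq:dendii}), we obtain $Y=Y_1*(\,|\vee_x Y_2)-Y_{11}\vee_u\bigl(Y_{12}*(\,|\vee_x Y_2)\bigr)$ in $\calh_{BT}$. Applying the algebra isomorphism $\Theta$ (which sends $*$ to concatenation) and using $\Theta(\,|\vee_x Y_2)=B^+_x(\Theta(Y_2))$,
\[\Theta(Y)=\Theta(Y_1)\cdot B^+_{v_{p+1}}(\Theta(Y_2))-\Theta\bigl(Y_{11}\vee_u(Y_{12}*(\,|\vee_x Y_2))\bigr).\]
For the first summand the outer induction applies to $Y_1$ and $Y_2$ (both of degree $<n$), and observation (a) shows $\Theta(Y_1)\cdot B^+_{v_{p+1}}(\Theta(Y_2))$ is a sum of forests decorated in canonical order by $v_1,\dots,v_n$. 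For the second summand, observation (b) shows that $Y_{12}*(\,|\vee_x Y_2)$ is a $\bk$-combination of planar binary trees each decorated in canonical order by $v_{d_v(Y_{11})+2},\dots,v_n$; grafting $Y_{11}(v_1,\dots,v_{d_v(Y_{11})})$ below the new root $u=v_{d_v(Y_{11})+1}$ shows $Y_{11}\vee_u(Y_{12}*(\,|\vee_x Y_2))$ is a $\bk$-combination of canonically decorated planar binary trees of degree $n$ whose left subtree $Y_{11}$ has degree $<p$; so the secondary induction applies termwise, and $\Theta$ of this summand is again a sum of forests decorated in canonical order by $v_1,\dots,v_n$. Subtracting establishes the refined claim for $Y$ and completes both inductions.

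The one non-formal ingredient is observation (b) --- that the Loday-Ronco product does not disturb the canonical vertex order of decorated planar binary trees --- which is exactly what transports the inductive hypotheses for the subtrees across the identity $Y=Y_1*(\,|\vee_x Y_2)-Y_{11}\vee_u(Y_{12}*(\,|\vee_x Y_2))$; the preorder behaviour of $B^+$ and of concatenation, together with the nested-induction bookkeeping of decoration blocks, are routine. As an alternative organisation, expanding $Y$ as a $*$-polynomial in the free generators $\{\,|\vee Y'\mid Y'\in\calb\}$ (Theorem~\mref{fabt}) turns the whole argument into a single induction on $n$, at the cost of invoking freeness.
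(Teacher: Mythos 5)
Your proof is correct, and its computational core --- that $B^+_{x_1}$ prepends the root to the preorder, so $\Theta(\,|\vee_{x_1}Y)=B^+_{x_1}(\Theta(Y))$ carries canonically ordered decorations to canonically ordered decorations --- is exactly the induction step the paper gives. The difference is one of completeness rather than of route: the paper's proof, as written, only treats trees of the form $|\vee_{x_1}Y$ and leaves implicit how the statement propagates to an arbitrary $Y\in\calb_n$, even though $\Theta$ is defined directly only on the generators $|\vee Y'$ and is extended multiplicatively. You make this reduction explicit, via the identity $Y=Y_1*(\,|\vee_x Y_2)-Y_{11}\vee_u(Y_{12}*(\,|\vee_x Y_2))$ of Eq.~\meqref{eq:gen}, a secondary induction on the degree of the left subtree, and the two order-preservation facts (your observations (a) and (b)) that concatenation in $\calf(X)$ and the product $*$ on $\bk\calb(X)$ respect the canonical orders. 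Observation (b) is the one substantive auxiliary fact; it is implicitly assumed throughout the paper (for instance in writing $*_\sigma$ on $\caly(V)$ via the undecorated recursion \meqref{eq:dendii} with no permutation of the tensor factors), and your routine induction for it from the recursion $Y*Y'=Y_1\vee_u(Y_2*Y')+(Y*Y_1')\vee_v Y_2'$ is valid, since $\vee$ glues in-orders in the left--root--right pattern. The degree bookkeeping in your nested induction also checks out: each tree in $Y_{12}*(\,|\vee_x Y_2)$ has total degree $n$ after grafting below $Y_{11}$, with left subtree $Y_{11}$ of degree strictly less than $p$. So your write-up is a correct and more self-contained version of the paper's argument, filling in precisely the steps the paper glosses over.
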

\begin{proof}
Indeed, we can see this fact by induction on $d_v$. Let $X$ be a set and $Y\in\calb_n(X)$ with its canonically ordered internal vertices decorated by $x_2,\dots,x_{n+1}\in X$, then
\[\Theta(\,|\vee_{x_1} Y)=(B^+_{x_1}\circ\Theta)(Y)=B^+_{x_1}\left(\sum_{F\in\calf_n}c_F F(x_2,\dots,x_{n+1})\right)=
\sum_{F\in\calf_n}c_FB^+(F)(x_1,\dots,x_{n+1}),\]
with $\Theta(Y)=\sum_{F\in\calf_n}c_F F(x_2,\dots,x_{n+1}),\,c_F\in\bk$, by the induction hypothesis. Namely, all monomial terms in $\Theta(\,|\vee_{x_1} Y)$ still have their vertices decorated by $x_1,\dots,x_{n+1}$ in the canonical order.
\end{proof}

For example, by Eq.~\meqref{eq:gen} in the proof of Theorem \mref{fabt} and the definition \meqref{theta} of $\Theta$,
\[\Theta(\raisebox{-.6mm}{\xy 0;/r.3pc/:
	(4.5,8)*{};(0,2)*{}**\dir{-};
	(-4.5,8)*{};(0,2)*{}**\dir{-};
	(0,6)*{};(-1.5,8)*{}**\dir{-};
	(1.5,8)*{};(-1.5,4)*{}**\dir{-};
	(0,0)*{};(0,2)*{}**\dir{-};
	(-3,3.5)*{\scriptstyle 1};
	(1.5,6)*{\scriptstyle 2};
	(1.5,1.5)*{\scriptstyle 3};
	\endxy})
=\Theta(\raisebox{-.1mm}{\xy 0;/r.3pc/:
	(3,6)*{};(0,2)*{}**\dir{-};
	(-3,6)*{};(0,2)*{}**\dir{-};
	(1.5,4)*{};(0,6)*{}**\dir{-};
	(0,0)*{};(0,2)*{}**\dir{-};
	(1.5,1.5)*{\scriptstyle 1};
	(3,3.5)*{\scriptstyle 2};
	\endxy}*\raisebox{-.1mm}{\xy 0;/r.3pc/:
	(1.5,4)*{};(0,2)*{}**\dir{-};
	(-1.5,4)*{};(0,2)*{}**\dir{-};
	(0,0)*{};(0,2)*{}**\dir{-};(1.5,1.5)*{\scriptstyle 3};
	\endxy}-\,|\vee_1(\raisebox{-.1mm}{\xy 0;/r.3pc/:
	(1.5,4)*{};(0,2)*{}**\dir{-};
	(-1.5,4)*{};(0,2)*{}**\dir{-};
	(0,0)*{};(0,2)*{}**\dir{-};(1.5,1.5)*{\scriptstyle 2};
	\endxy}*\raisebox{-.1mm}{\xy 0;/r.3pc/:
	(1.5,4)*{};(0,2)*{}**\dir{-};
	(-1.5,4)*{};(0,2)*{}**\dir{-};
	(0,0)*{};(0,2)*{}**\dir{-};(1.5,1.5)*{\scriptstyle 3};
	\endxy}))
=B^+_1(\Theta(\raisebox{-.1mm}{\xy 0;/r.3pc/:
	(1.5,4)*{};(0,2)*{}**\dir{-};
	(-1.5,4)*{};(0,2)*{}**\dir{-};
	(0,0)*{};(0,2)*{}**\dir{-};(1.5,1.5)*{\scriptstyle 2};
	\endxy}))\Theta(\raisebox{-.1mm}{\xy 0;/r.3pc/:
	(1.5,4)*{};(0,2)*{}**\dir{-};
	(-1.5,4)*{};(0,2)*{}**\dir{-};
	(0,0)*{};(0,2)*{}**\dir{-};(1.5,1.5)*{\scriptstyle 3};
	\endxy})-B^+_1(\Theta(\raisebox{-.1mm}{\xy 0;/r.3pc/:
	(1.5,4)*{};(0,2)*{}**\dir{-};
	(-1.5,4)*{};(0,2)*{}**\dir{-};
	(0,0)*{};(0,2)*{}**\dir{-};(1.5,1.5)*{\scriptstyle 2};
	\endxy})\Theta(\raisebox{-.1mm}{\xy 0;/r.3pc/:
	(1.5,4)*{};(0,2)*{}**\dir{-};
	(-1.5,4)*{};(0,2)*{}**\dir{-};
	(0,0)*{};(0,2)*{}**\dir{-};(1.5,1.5)*{\scriptstyle 3};
	\endxy}))
=\raisebox{-.1mm}{\xy 0;/r.3pc/:
	(-1.5,0)*{};(-1.5,2)*{}**\dir{-};
	(-1.5,0)*{\sbu};(-1.5,2)*{\sbu};(1,0)*{\sbu};
	(-3,0)*{\scriptstyle 1};
	(-3,2)*{\scriptstyle 2};
	(2.5,0)*{\scriptstyle 3};
	\endxy}
-\raisebox{-.1mm}{\xy 0;/r.3pc/:
	(1.5,0)*{};(0,2)*{}**\dir{-};
	(1.5,0)*{};(3,2)*{}**\dir{-};
	(1.5,0)*{\sbu};(0,2)*{\sbu};(3,2)*{\sbu};
	(0,0)*{\scriptstyle 1};
	(0,3.5)*{\scriptstyle 2};
	(3,3.5)*{\scriptstyle 3};
	\endxy}.\]

Since $\caly(V)$ is also freely generated by $\calw(E)=\Big\{\,|\vee_v Y\,|\,Y\in\calb(E),v\in E\Big\}$ as an algebra by Theorem~\mref{fabt}, we can similarly define an algebra homomorphism
\[\Theta_\sigma:\caly(V)\rar\calr(V),\,\Theta_\sigma(\,|\,)=1,\,\Theta_\sigma(\,|\vee_v Y)=(B^+_v\circ\Theta_\sigma)(Y),\,Y\in\bar{\calb}(V),v\in V.\]

\begin{theorem}
Given any braided vector space $(V,\sigma)$, the map $\Theta_\sigma$ is a graded braided Hopf algebra isomorphism between $\caly(V)$ and $\calr(V)$.
\mlabel{thm:iso}
\end{theorem}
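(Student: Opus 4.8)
The plan is to verify that $\Theta_\sigma$ is a graded linear isomorphism, a homomorphism of braided algebras, and a homomorphism of braided coalgebras, and then to deduce the antipode compatibility from connectedness. The workhorse is Lemma~\mref{ord}: writing $\Theta(Y)=\sum_F c_F\,F$ in $\bk\calf_n$ for $Y\in\calb_n$, it gives $\Theta_\sigma(Y(v_1,\dots,v_n))=\sum_F c_F\,F(v_1,\dots,v_n)$, so that under the identifications~\meqref{eq:yset} and~\meqref{eq:ftree} the restriction of $\Theta_\sigma$ to $\caly_n(V)=\bk\calb_n\otimes V^{\otimes n}$ is $\Theta|_{\bk\calb_n}\otimes\Id_{V^{\otimes n}}$; since Holtkamp's $\Theta$ is a graded isomorphism, $\Theta|_{\bk\calb_n}\colon\bk\calb_n\to\bk\calf_n$ is bijective and so $\Theta_\sigma$ is a graded linear isomorphism. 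It is an algebra homomorphism by construction, it satisfies $\varepsilon'_{RT}\Theta_\sigma=\varepsilon'_{BT}$ (because $\Theta_\sigma(\,|\,)=1$ and $\Theta_\sigma$ preserves $d_v$), and it is compatible with all the braiding data,
\[(\Theta_\sigma\otimes\Theta_\sigma)\sigma_{BT}=\sigma_{RT}(\Theta_\sigma\otimes\Theta_\sigma),\quad (\Theta_\sigma\otimes\Id_V)\sigma_{V,\caly(V)}=\sigma_{V,\calr(V)}(\Id_V\otimes\Theta_\sigma),\]
and similarly with $\sigma_{\caly(V),V},\sigma_{\calr(V),V}$: each of these is immediate from the displayed formula for $\Theta_\sigma$, since these maps permute the underlying trees and act on decorations by some $\beta_{ij}$ while $\Theta_\sigma$ touches only the tree-part. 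Thus $\Theta_\sigma$ is an isomorphism of graded braided algebras.

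The main point is that $\Theta_\sigma$ is a coalgebra morphism, $(\Theta_\sigma\otimes\Theta_\sigma)\Delta'_{BT}=\Delta'_{RT}\Theta_\sigma$. Because $\Delta'_{BT}$ (Theorem~\mref{bhbt}) and $\Delta'_{RT}$ (\cite[Theorem 19]{Fo3}) satisfy the braided bialgebra compatibility~\meqref{eq:comp}, they are algebra homomorphisms into $\caly(V)\otimes\caly(V)$ and $\calr(V)\otimes\calr(V)$ with their braided multiplications; since $\Theta_\sigma$ is a braided algebra map, it follows that both $(\Theta_\sigma\otimes\Theta_\sigma)\Delta'_{BT}$ and $\Delta'_{RT}\Theta_\sigma$ are algebra homomorphisms $\caly(V)\to\calr(V)\otimes\calr(V)$. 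By Theorem~\mref{fabt} the algebra $\caly(V)$ is generated by $\calw(E)$, which lies in the linear span of $\{\,|\vee_v Y\mid v\in V,\ Y\in\bar{\calb}(V)\}$, so it suffices to check the identity on the elements $\,|\vee_v Y$.

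I would do this by induction on $d_v(Y)$. The base case $Y=|$ is the computation
\[(\Theta_\sigma\otimes\Theta_\sigma)\Delta'_{BT}(Y[v])=(\Theta_\sigma\otimes\Theta_\sigma)\bigl(Y[v]\otimes|+|\otimes Y[v]\bigr)=\sbu_v\otimes1+1\otimes\sbu_v=\Delta'_{RT}(\sbu_v).\]
For the inductive step, apply $\Theta_\sigma\otimes\Theta_\sigma$ to
\[\Delta'_{BT}(\,|\vee_v Y)=(\,|\vee_v Y)\otimes|+(*_\sigma,\vee)\bigl(\Delta'_{BT}(\,|\,)\otimes v\otimes\Delta'_{BT}(Y)\bigr),\]
substitute $\Delta'_{BT}(\,|\,)=|\otimes|$ and the definition $(*_\sigma,\vee)=(*_\sigma\otimes\vee)(\sigma_{BT})_2(\sigma_{V,\caly(V)})_3$, move $\Theta_\sigma$ past the braidings using the compatibilities of the first paragraph, and use $\Theta_\sigma(\,|\vee_v Z)=B^+_v(\Theta_\sigma Z)$ together with the inductive hypothesis $(\Theta_\sigma\otimes\Theta_\sigma)\Delta'_{BT}(Y)=\Delta'_{RT}\Theta_\sigma(Y)$. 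The resulting expression is exactly the right-hand side of the twisted $1$-cocycle identity~\meqref{bcoc} applied to $\Theta_\sigma(\,|\vee_v Y)=B^+_v(\Theta_\sigma Y)$, i.e. it equals $\Delta'_{RT}\Theta_\sigma(\,|\vee_v Y)$, completing the induction.

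Finally, $\caly(V)$ and $\calr(V)$ are connected graded braided bialgebras, hence braided Hopf algebras by Lemma~\mref{cbch}, and an isomorphism of braided bialgebras between braided Hopf algebras automatically intertwines the (unique) antipodes; so $\Theta_\sigma$ is a graded braided Hopf algebra isomorphism. I expect the main obstacle to be the bookkeeping in the inductive step: carefully pushing $\Theta_\sigma$ and the several braidings through the composite $(*_\sigma,\vee)$ and matching the output with the twisted $1$-cocycle relation~\meqref{bcoc} for $\Delta'_{RT}\circ B^+$. A more conceptual alternative is to compare the closed formulas~\meqref{blrco} and~\meqref{bckco} term by term, using that Holtkamp's $\Theta$ identifies the admissible cuts of $Y$ with those of the forests in $\Theta(Y)$ and that, by Lemma~\mref{ord}, this identification preserves the canonical orders, so the corrections $T^\sigma_{w^{-1}}$ agree on both sides; the difficulty there is making the cut-matching precise.
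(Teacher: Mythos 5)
Your proposal is correct and follows essentially the same route as the paper: bijectivity and gradedness from the structure of $\Theta$, braided-algebra compatibility via Lemma~\mref{ord}, and the coalgebra-morphism property by induction on $d_v$ over the generators $\,|\vee_v Y$ using the twisted $1$-cocycle identity~\meqref{bcoc}. The only (harmless) differences are that you observe $\Theta_\sigma|_{\caly_n(V)}=\Theta|_{\bk\calb_n}\otimes\Id_{V^{\otimes n}}$ to get the braiding compatibility directly rather than by the paper's induction, and you make explicit the reduction to generators that the paper leaves implicit.
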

\begin{proof}
First like $\Theta$, $\Theta_\sigma$ is bijective with its inverse defined recursively by
$\Theta_\sigma^{-1}(1)=|\,,\,\Theta_\sigma^{-1}(T)=|\vee_v\Theta_\sigma^{-1}(F)$,
for any $T=B^+_v(F)\in\bar{\calt}(V)$ for some $F\in\bar{\calf}(V),v\in V$.  Also, it is clear that $\Theta_\sigma$ is homogeneous with respect to the degrees $d_v$ on $\caly(V)$ and $\calr(V)$.

We next check that $\sigma_{RT}(\Theta_\sigma\otimes\Theta_\sigma)=(\Theta_\sigma\otimes\Theta_\sigma)\sigma_{BT}$. Thus $\Theta_\sigma:\caly(V)\rar\calr(V)$ is a braided algebra map. It can be done for the generators in $\calw(E)$ by induction on $d_v$. For any $Y,Y'\in\bar{\calb}(V),\,u,v\in V$,
\begin{align*}
\sigma_{RT}&(\Theta_\sigma(\,|\vee_u Y)\otimes\Theta_\sigma(\,|\vee_v Y'))
=\sigma_{RT}(B^+_u\otimes B^+_v)(\Theta_\sigma(Y)\otimes\Theta_\sigma(Y'))\\
&=(B^+\otimes B^+)(\sigma_{V,\calr(V)})_2\sigma_1(\sigma_{RT})_3(\sigma_{\calr(V),V})_2(u\otimes \Theta_\sigma(Y)\otimes v\otimes \Theta_\sigma(Y'))\\
&=(B^+\otimes B^+)(\Id_V\otimes \Theta_\sigma\otimes\Id_V\otimes \Theta_\sigma)(\sigma_{V,\caly(V)})_2\sigma_1(\sigma_{BT})_3(\sigma_{\caly(V),V})_2(u\otimes Y\otimes v\otimes Y')\\
&=(\Theta_\sigma\otimes\Theta_\sigma)\sigma_{BT}((\,|\vee_u Y)\otimes(\,|\vee_v Y')),
\end{align*}
where the second equality applies Eqs.~\meqref{fv1} and \meqref{fv2}; the third one is due to the induction hypothesis and the fact that $\Theta$ preserves the canonical orders for vertices of trees shown in Lemma \mref{ord}; and the last one is by Eqs.~\meqref{tv1} and \meqref{tv2}.

To complete the proof, we only need to show that $\Theta_\sigma$ is a braided coalgebra homomorphism for which we apply induction on $d_v$. It is clear that $\Delta'_{RT}\Theta_\sigma(\,|\,)=1\otimes 1=(\Theta_\sigma\otimes\Theta_\sigma)\Delta'_{BT}(\,|\,)$ by definition.  Further, for any $Y\in\bar{\calb}(V)$ and $v\in V$, we have
\begin{align*}
\Delta'_{RT}\Theta_\sigma&(\,|\vee_v Y)
=\Delta'_{RT}B^+_v\Theta_\sigma(Y)\\
&=B^+_v(\Theta_\sigma(Y))\otimes 1+(\Id_{\calr(V)}\otimes B^+)(\sigma_{V,\calr(V)})_1(v\otimes \Delta'_{RT}\Theta_\sigma(Y))\\
&=\Theta_\sigma(\,|\vee_vY)\otimes 1+(\Id_{\calr(V)}\otimes B^+)(\sigma_{V,\calr(V)})_1(v\otimes (\Theta_\sigma\otimes\Theta_\sigma)\Delta'_{BT}(Y))\\
&=\Theta_\sigma(\,|\vee_v Y)\otimes 1+(\Theta_\sigma\otimes B^+(\Id_V\otimes\Theta_\sigma))(\sigma_{V,\caly(V)})_1(v\otimes\Delta'_{BT}(Y))\\
&=\Theta_\sigma(\,|\vee_v Y)\otimes \Theta_\sigma(\,|\,)+(\Theta_\sigma\otimes\Theta_\sigma\vee)(\sigma_{BT})_1(\sigma_{V,\caly(V)})_2(\,|\otimes v\otimes\Delta'_{BT}(Y))\\
&=(\Theta_\sigma\otimes\Theta_\sigma)\Delta'_{BT}(\,|\vee_v Y),
\end{align*}
where the first and fifth equalities are by the definition of $\Theta_\sigma$; the second equality uses the twisted 1-cocycle condition \meqref{bcoc} of $B^+$;
the third equality is due to the induction hypothesis
and the fourth one is based on the fact that $\Theta$ preserves the canonical orders for vertices of trees.
\end{proof}

\section{Braided tridendriform algebra of planar angularly decorated trees}
\mlabel{sec:trid}
We now extending our braiding approach to tridendriform algebra and the second Hopf algebra of Loday and Ronco on angularly decorated planar rooted trees as free tridendriform algebras. The free objects can be generated by a (braided) space or a (braided) algebra. In the later case, the combinatorial carrier is a special class of planar rooted trees of independent interest.

\subsection{Braided tridendriform algebras}
We recall another important notion of Loday and Ronco~\mcite{LR1}.
\begin{defn}
A quadruple $(R,\prec,\succ,\cdot)$ is
called a {\rm tridendriform algebra} if $R$ is a $\bk$-module with three binary operations $\prec,\succ,\cdot$ satisfying the relations
\begin{align}
&\mlabel{tprec}(x\prec y)\prec z=x\prec(y* z),\\
&\mlabel{tps}(x\succ y)\prec z= x\succ (y \prec z),\\
&\mlabel{tsucc}x\succ (y\succ z)=(x* y)\succ z,\\
&\mlabel{tpc} (x\cdot y)\prec z= x\cdot (y\prec z),\\
&\mlabel{tpsc} (x\prec y)\cdot z= x\cdot (y\succ z),\\
&\mlabel{tsc} (x\succ y)\cdot z= x\succ(y \cdot z),\\
&\mlabel{tc}(x\cdot y)\cdot z=x\cdot(y\cdot z),
\end{align}
for $x,y,z\in R$, where $*:=\prec+\succ+\,\cdot$ is henceforth associative.
\mlabel{tri}
\end{defn}

Define the {\bf augmentation} $R^+:=\bk\oplus R$ of $R$ with $\prec,\succ,\cdot$ satisfying
\begin{equation}
1\prec x=0,\,x\prec 1=x, \quad 1\succ x=x, x\succ 1=0,
1\cdot x=x\cdot 1=0 \tforall x\in R.
\mlabel{us}
\end{equation}
The operation $*$ is extended to $R^+$ with unit 1, while $1\prec1,\,1\succ1,\,1\cdot 1$ are undefined.
	
Furthermore, a submodule $I$ of $R$ is called a {\bf tridendriform ideal}, if
\[R \prec I\subseteq I,\,I \prec R\subseteq I,\,R \succ I\subseteq I,\,I \succ R\subseteq I,\,R \cdot I\subseteq I,\,I \cdot R\subseteq I.\]
Then the quotient module $R/I$ becomes a tridendriform algebra with induced operators $(\prec,\succ,\cdot)$.
For tridendriform algebras $(R,\prec,\succ,\cdot)$ and $(R',\prec',\succ',\cdot')$, a map $f:R\rightarrow R'$ is called a {\bf homomorphism of tridendriform algebras} if
$f$ is a linear map such that $f\prec=\prec'(f\otimes f)$, $f\succ=\succ'(f\otimes f)$ and $f\cdot=\cdot'(f\otimes f)$.

Next we introduce the braided analogue of tridendriform algebras~\mcite{GL}.
\begin{defn}\mlabel{btri}
A quintuple $(R,\prec,\succ,\cdot,\sigma)$ is called a {\bf braided tridendriform algebra} if $(R,\sigma)$ is a braided vector space and $(R,\prec,\succ,\cdot)$ is a tridendriform algebra such that
\begin{align}
&\mlabel{bta1}\sigma(\Id_R\otimes\prec)=(\prec\otimes\Id_R)\sigma_2\sigma_1,\quad
\sigma(\prec\otimes\Id_R)=(\Id_R\otimes\prec)\sigma_1\sigma_2,\\
&\mlabel{bta2}\sigma(\Id_R\otimes\succ)=(\succ\otimes\Id_R)\sigma_2\sigma_1,\quad
\sigma(\succ\otimes\Id_R)=(\Id_R\otimes\succ)\sigma_1\sigma_2,\\
&\mlabel{bta3}
\sigma(\Id_R\otimes\cdot)=(\cdot\otimes\Id_R)\sigma_2\sigma_1,\,\,\,\quad
\sigma(\cdot\otimes\Id_R)=(\Id_R\otimes\cdot)\sigma_1\sigma_2.
\end{align}
\end{defn}
Let $*:=\prec+\succ+\,\cdot$\,. Then $(R,*,\sigma)$ is a braided algebra. Also, $(R^+,*,\sigma)$ is a unital braided algebra with the braiding $\sigma$ of $R$ extended by
\[\sigma(1\otimes x)=x\otimes 1,\,\sigma(x\otimes 1)=1\otimes x,\,x\in R^+.\]
	
For two braided tridendriform algebras $(R,\prec,\succ,\sigma)$ and $(R',\prec',\succ',\sigma')$, a map $f:R\rightarrow R'$ is called a {\bf homomorphism of braided tridendriform algebras} if $f$ is a homomorphism of tridendriform algebras and $(f\otimes f)\sigma=\sigma'(f\otimes f)$.	
Moreover, a tridendriform ideal $I$ of $R$ is called a {\bf braided tridendriform ideal} if it is also a braided subspace of $R$. Then the tridendriform quotient algebra $R/I$ is also a braided tridendriform algebra.

\subsection{Free braided tridendriform algebras on braided vector spaces}

In \mcite{LR2}, Loday and Ronco used planar rooted trees to construct free tridendriform algebras, providing motivation to the notion of angularly decorated planar rooted forests introduced in~\mcite{EG2,Gu2} to construct free Rota-Baxter algebras. The notion of angle of a planar rooted tree can be tracked to the work~\mcite{Ko} of Kontsevich.

Let $\cald$ be the set of planar angularly decorated rooted trees with valency of all internal vertices greater than 2. Let $\cald_n$ be the set of trees in $\cald$ with $n+1$ leaves, $n\geq0$, and define $d_l(T):=n$ to be the {\bf leaf degree} of $T\in \cald_n,\,n\geq0$. In particular, $\cald_0=\{\,|\,\}$.
By Schr\"{o}der's second problem in \cite[Example 6.2.8]{Sta}, we know that $|\cald_n|$
is equal to the $(n+1)$-th little Schr\"{o}der number (or super Catalan number) $s_{n+1}$, $n\geq0$.
On the other hand, by~\cite[\S 4.1]{Gu1} we
can interpret $\cald$ as a subset of $\calt$ by adding leaf vertices and removing the root edge for any tree in $\cald$.

Let $\cald(X)$ be the set consisting of trees in $\cald$ angularly decorated by a set $X$, and we have $\cald(X)=\sqcup_{n\geq0}\cald_n(X)$. See~\cite[\S 4.2.1.3]{Gu1} and \cite[\S 5.1]{Gu2} for such decorations. Some angularly decorated trees with small leaf degrees are
\[|\,,\,\raisebox{-.1mm}{\xy 0;/r.3pc/:
	(1.5,4)*{};(0,2)*{}**\dir{-};
	(-1.5,4)*{};(0,2)*{}**\dir{-};
	(0,0)*{};(0,2)*{}**\dir{-};(0,4)*{\scriptstyle x};
	\endxy}\,,\,\raisebox{-.1mm}{\xy 0;/r.3pc/:
	(3,6)*{};(0,2)*{}**\dir{-};
	(-3,6)*{};(0,2)*{}**\dir{-};
	(-1.5,4)*{};(0,6)*{}**\dir{-};
	(0,0)*{};(0,2)*{}**\dir{-};
	(0,4)*{\scriptstyle y};
	(-1.5,6)*{\scriptstyle x};
	\endxy}\,,\,\raisebox{-.1mm}{\xy 0;/r.3pc/:
	(3,6)*{};(0,2)*{}**\dir{-};
	(-3,6)*{};(0,2)*{}**\dir{-};
	(1.5,4)*{};(0,6)*{}**\dir{-};
	(0,0)*{};(0,2)*{}**\dir{-};
	(0,4)*{\scriptstyle x};
	(1.5,6)*{\scriptstyle y};
	\endxy}\,,\,\raisebox{-.1mm}{\xy 0;/r.3pc/:
	(3,6)*{};(0,2)*{}**\dir{-};
	(-3,6)*{};(0,2)*{}**\dir{-};
	(0,2)*{};(0,6)*{}**\dir{-};
	(0,0)*{};(0,2)*{}**\dir{-};
	(-1.5,6)*{\scriptstyle x};
	(1.5,6)*{\scriptstyle y};
	\endxy}\,,\,\raisebox{-.1mm}{\xy 0;/r.3pc/:
	(4,6)*{};(0,2)*{}**\dir{-};
	(-4,6)*{};(0,2)*{}**\dir{-};
	(2,4)*{};(1,6)*{}**\dir{-};
	(-2,4)*{};(-1,6)*{}**\dir{-};
	(0,0)*{};(0,2)*{}**\dir{-};
	(0,4)*{\scriptstyle y};
	(-2.5,6)*{\scriptstyle x};
	(2.5,6)*{\scriptstyle z};
	\endxy}\,,\,\raisebox{-.1mm}{\xy 0;/r.3pc/:
	(4.5,6)*{};(0,2)*{}**\dir{-};
	(-4.5,6)*{};(0,2)*{}**\dir{-};
	(0,2)*{};(1.5,6)*{}**\dir{-};
	(0,2)*{};(-1.5,6)*{}**\dir{-};
	(0,0)*{};(0,2)*{}**\dir{-};
	(-3,6)*{\scriptstyle x};
	(0,6)*{\scriptstyle y};
	(3,6)*{\scriptstyle z};
	\endxy}\,,\dots\]
where $x,y,z\in X$. We will use the notation
$(T;x_1,\dots,x_n)\in \cald_n(X)$ to denote $T\in \cald_n$ with angular decorations by $x_1,\dots,x_n\in X$ from left to right.

Also, note that any planar rooted tree $T\in\cald(X)$ can be uniquely
obtained by jointing the $k+1$ roots of its branches $T_0,\dots,T_k\in\cald(X)$ to a new vertex with angular decoration $x_1,\dots,x_k\in X$ from left to right and adding a new root. We denote this representation of $T$ by
\begin{equation}\mlabel{eq:rep}
T_0\vee_{x_1}\cdots\vee_{x_k} T_k.
\end{equation}
In particular, we denote the tree $\raisebox{-.1mm}{\xy 0;/r.3pc/:
	(1.5,4)*{};(0,2)*{}**\dir{-};
	(-1.5,4)*{};(0,2)*{}**\dir{-};
	(0,0)*{};(0,2)*{}**\dir{-};(0,4)*{\scriptstyle x};
	\endxy}=|\vee_x|$
by $T[x]$.

As in the cases of planar binary trees $\caly(V)$ and of planar rooted forests $\calr(V)$, we define
\[\calp(V):=\bigoplus_{n\geq0}\calp_n(V), \quad \calp_n(V):=
\bk\cald_n\otimes V^{\otimes n},\]
and $\oline{\calp(V)}:=\bigoplus_{n\geq1}\calp_n(V)$ as its subspace.

\smallskip
Also, identify $T\otimes(v_1\otimes\cdots\otimes v_n)$ with $(T;v_1,\dots,v_n)$, and denote
\[\bar{\cald}_n(V):=\{(T;v_1,\dots,v_n)\,|\,T\in\cald_n,\,v_1,\dots,v_n\in V\},\]
for any $n\geq0$. Let $\bar{\cald}(V):=\sqcup_{n\geq0}\bar{\cald}_n(V)$ with $\bar{\cald}_0(V)=\{\,|\,\}$.
Then $\bar{\cald}(V)$ linearly spans $\calp(V)$.

Given a braided vector space $(V,\sigma)$, the braiding $\sigma$ on $V$ induces a braiding $\sigma_{AT}$ on $\calp(V)$ defined by
\begin{align*}
&\sigma_{AT}((T;v_1,\dots,v_m)\otimes|\,)=|\otimes (T;v_1,\dots,v_m),\,
\sigma_{AT}(\,|\otimes (T;v_1,\dots,v_m))=(T;v_1,\dots,v_m)\otimes|\,,\\
&\sigma_{AT}((T;v_1,\dots,v_m)\otimes (T';v_{m+1},\dots,v_{m+n}))=(T'\otimes T)\beta_{mn}(v_1\otimes\cdots\otimes v_{m+n}),
\end{align*}
for any $T\in\cald_m,T'\in\cald_n$ and $v_1,\dots,v_{m+n}\in V$. Also, we define the linear map
\[\sigma_{V,\calp(V)}:V\otimes\calp(V)\rar \calp(V)\otimes V,\,v\otimes (T;v_1,\dots,v_m)\mapsto (T\otimes\Id_V)\beta_{1,\,m}(v\otimes v_1\otimes\cdots\otimes v_m)\]
for any $T\in\cald_m$ and $v,v_1,\dots,v_m\in V$, and define $\sigma_{\calp(V),V}:\calp(V)\otimes V\rar V\otimes\calp(V)$ by symmetry.

For $k\geq 1$, define
\[\vee=\vee_k:(\calp(V)\otimes V)^{\otimes k}\otimes \calp(V)\rar\calp(V),\,T_0\otimes v_1\otimes\cdots\otimes v_k\otimes T_k\mapsto T_0\vee_{v_1}\cdots\vee_{v_k} T_k\]
for any $T_0,\dots,T_k\in\bar{\cald}(V), v_1,\dots,v_k\in V$, to be the (linear) grafting operation on $\calp(V)$.
The following result is obtained by the definitions of the operator $\vee$ and the braidings $\sigma_{AT},\,\sigma_{V,\calp(V)},\,\sigma_{\calp(V),V}$.

\begin{lemma}
	For any $v_1,\dots,v_{k-1}\in V,\,T,T_1,\dots,T_k\in\bar{\cald}(V)$, we have
	\begin{align}
	&\mlabel{av1}\sigma_{AT}((T_1\vee_{v_1}\cdots\vee_{v_{k-1}} T_k)\otimes T)\\
	&\nonumber \hspace{.2cm}=(\Id_{\calp(V)}\otimes\vee)(\sigma_{AT})_1(\sigma_{V,\calp(V)})_2\cdots(\sigma_{V,\calp(V)})_{2(k-1)}(\sigma_{AT})_{2k-1}(T_1\otimes v_1\otimes\cdots\otimes v_{k-1}\otimes T_k\otimes T),\\
	&\mlabel{av2}
	\sigma_{AT}(T\otimes(T_1\vee_{v_1}\cdots\vee_{v_{k-1}} T_k))\\
	&\nonumber \hspace{.2cm}=(\vee\otimes\Id_{\calp(V)})(\sigma_{AT})_{2k-1}(\sigma_{\calp(V),V})_{2(k-1)}\cdots(\sigma_{\calp(V),V})_2(\sigma_{AT})_1(T\otimes T_1\otimes v_1\otimes\cdots\otimes v_{k-1}\otimes T_k).
	\end{align}	
\end{lemma}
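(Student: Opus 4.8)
The plan is to verify both identities by unwinding the definitions of the grafting $\vee$ and of the braidings $\sigma_{AT},\,\sigma_{V,\calp(V)},\,\sigma_{\calp(V),V}$, the essential input being the two coherence relations for $\beta$ recorded in Eq.~\meqref{beta}. I will describe the argument for \meqref{av1}; \meqref{av2} is obtained by the mirror-image reasoning.

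First I would set up notation. Put $S:=T_1\vee_{v_1}\cdots\vee_{v_{k-1}}T_k$, $n_i:=d_l(T_i)$ for $1\le i\le k$, $n:=d_l(T)$, and $N:=n_1+\cdots+n_k+(k-1)$, which is $d_l(S)$. From the representation \meqref{eq:rep} together with the chosen left-to-right order on angles, the angular decorations of $S$ read left to right are exactly the concatenation of the decorations of $T_1$, then $v_1$, then those of $T_2$, then $v_2$, \dots, then $v_{k-1}$, then those of $T_k$. Hence, writing $u\in V^{\otimes(N+n)}$ for the tensor of all these decorations followed by the $n$ decorations of $T$, the definition of $\sigma_{AT}$ gives at once $\sigma_{AT}(S\otimes T)=(T\otimes S)\beta_{N,n}(u)$.

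Next I would expand the right-hand side of \meqref{av1} by applying the listed operators from right to left. The factor $(\sigma_{AT})_{2k-1}$ replaces $T_k\otimes T$ by $(T\otimes T_k)\beta_{n_k,n}$, leaving all tree shapes untouched and merely $\beta$-permuting the decorations in the last $n_k+n$ slots; $(\sigma_{V,\calp(V)})_{2(k-1)}$ then carries the decoration $v_{k-1}$ past the $n$ decorations of the displaced copy of $T$, i.e. applies $\beta_{1,n}$; the next factor applies $\beta_{n_{k-1},n}$, and so on, down to $(\sigma_{AT})_1$, which applies $\beta_{n_1,n}$. Since no factor alters the underlying trees, the net effect on the string of decorations is the composite that moves the block of $n$ decorations of $T$ past the blocks of sizes $n_1,1,n_2,1,\dots,1,n_k$ one block at a time, starting from the rightmost; by iterating the first relation of Eq.~\meqref{beta} this composite collapses to $\beta_{N,n}$ applied to $u$. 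Finally $(\Id_{\calp(V)}\otimes\vee)$ regrafts $T_1,v_1,\dots,v_{k-1},T_k$, now carrying their $\beta_{N,n}$-permuted decorations, into the tree $S$, so the right-hand side equals $(T\otimes S)\beta_{N,n}(u)$, which matches the left-hand side. For \meqref{av2} the same computation applies with $\sigma_{\calp(V),V}$ in place of $\sigma_{V,\calp(V)}$ and with the second relation of Eq.~\meqref{beta} used to assemble the local braidings into $\beta_{n,N}$.

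I expect the only delicate part to be the bookkeeping of tensor positions: after each local braiding the displaced copy of $T$ shifts one slot to the left (resp. right), so one must check that the indices $1,2,\dots,2k-1$ appearing in the stated composition really do correspond to braiding $T$ successively past $T_k$, $v_{k-1}$, $T_{k-1}$, \dots, $T_1$, and that this bracketing of the $\beta$'s lines up with the iteration of \meqref{beta}. This is the very same mechanism already used for planar binary trees in \meqref{tv1}--\meqref{tv2} and for planar rooted forests in \meqref{fv1}--\meqref{fv2}, here with $k-1$ angular decorations interleaved among the $k$ branches; in fact one may reduce the general case to $k=2$ by an induction on $k$ that peels off the branch $T_1$ at each step.
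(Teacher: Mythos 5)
Your proposal is correct and is essentially the paper's own argument: the paper offers no written proof beyond asserting that the lemma follows from the definitions of $\vee$ and of $\sigma_{AT},\sigma_{V,\calp(V)},\sigma_{\calp(V),V}$, and your computation — tracking the left-to-right angular decorations of $T_1\vee_{v_1}\cdots\vee_{v_{k-1}}T_k$ and collapsing the composite of local braidings into $\beta_{N,n}$ (resp.\ $\beta_{n,N}$) via the two relations in Eq.~\meqref{beta} — is exactly that definitional verification, parallel to the unproved analogues \meqref{tv1}--\meqref{tv2} and \meqref{fv1}--\meqref{fv2}.
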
	

Define linear maps $\prec_\sigma,\succ_\sigma,\cdot_\sigma:\calp(V)\otimes \calp(V)\rar\calp(V)$ by the following recursions.
\begin{align}
&\mlabel{ap}T\prec_\sigma |=T,\,|\prec_\sigma T=0,\,T\prec_\sigma T'=T_0\vee_{v_1}\cdots\vee_{v_k}(T_k*_\sigma T');\\
&\mlabel{as}T\succ_\sigma |=0,\,|\succ_\sigma T=T,\,T\succ_\sigma T'=(T*_\sigma T_0')\vee_{v_{k+1}}\cdots\vee_{v_{k+l}} T_l';\\
&\mlabel{ac}T\cdot_\sigma |=|\cdot_\sigma T=0,\,T\cdot_\sigma T'=T_0\vee_{v_1}\cdots\vee_{v_k}(T_k*_\sigma T'_0)\vee_{v_{k+1}}\cdots\vee_{v_{k+l}}T'_l,
\end{align}
for any $T=T_0\vee_{v_1}\cdots\vee_{v_k} T_k\in\bar{\cald}_m(V),\,T'=T_0'\vee_{v_{k+1}} \cdots\vee_{v_{k+l}} T_l'\in\bar{\cald}_n(V)$ with $v_1,\dots,v_{k+l}\in V,\,1\leq k\leq m,1\leq l\leq n,\,m,n\geq1$,
where $*_\sigma:=\prec_\sigma+\succ_\sigma+\,\cdot_\sigma$ on $\oline{\calp(V)}$ and is extended to $\calp(V)$ with the unit $|\in\bar{\cald}_0(V)$. Let
\begin{equation}
i_V: V\rar\oline{\calp(V)},\,v\mapsto T[v]
\mlabel{eq:vtaemb}
\end{equation}
be the natural embedding of braided vector spaces.

\begin{theorem}
Given a braided vector space $(V,\sigma)$, 
the quintuple $(\oline{\calp(V)},\prec_\sigma,\succ_\sigma,\cdot_\sigma,\sigma_{AT})$ is a  braided tridendriform algebra.
\mlabel{btat}
\end{theorem}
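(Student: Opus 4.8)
The plan is to follow closely the pattern of the proof of Theorem~\mref{bdbt}. First, by the construction of Loday and Ronco~\mcite{LR2} (as in the dendriform case, where one invokes \cite[Theorem~5.8]{Lo1}), the triple $(\oline{\calp(V)},\prec_\sigma,\succ_\sigma,\cdot_\sigma)$ with operations given by the recursions \meqref{ap}--\meqref{ac} is the free tridendriform algebra on $V$; in particular the relations \meqref{tprec}--\meqref{tc} hold and $*_\sigma:=\prec_\sigma+\succ_\sigma+\,\cdot_\sigma$ is associative. Next, $\sigma_{AT}$ is a braiding on $\calp(V)$: the braid relation \meqref{eq:qybe} for $\sigma_{AT}$ is inherited from the braiding $\beta$ on $T(V)$ through the identifications $\calp_n(V)=\bk\cald_n\otimes V^{\ot n}$, just as for $\sigma_{BT}$. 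Hence the only thing left to prove is that $\sigma_{AT}$ is compatible with $\prec_\sigma,\succ_\sigma,\cdot_\sigma$ in the sense of \meqref{bta1}--\meqref{bta3}; by the discussion following Definition~\mref{btri}, together with the associativity of $*_\sigma$, this will simultaneously give that $(\calp(V),*_\sigma,\sigma_{AT})$ is a braided algebra, so I would verify \meqref{ba1}--\meqref{ba2} for $*_\sigma$ in the same induction.

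The induction is on the leaf degree $d_l$, applied jointly to \meqref{bta1}--\meqref{bta3} and \meqref{ba1}--\meqref{ba2}. The base case, and every case in which one of the arguments is $|$, is immediate from the definition of $\sigma_{AT}$ and the unit conventions in \meqref{ap}--\meqref{ac}. For the inductive step, fix homogeneous $T=T_0\vee_{v_1}\cdots\vee_{v_k}T_k\in\bar{\cald}_m(V)$ with $1\le k\le m$, $T'=T_0'\vee_{v_{k+1}}\cdots\vee_{v_{k+l}}T_l'\in\bar{\cald}_n(V)$ with $1\le l\le n$, and $T''\in\bar{\cald}(V)$. To check, say, $\sigma_{AT}((T\prec_\sigma T')\ot T'')=(\Id_{\calp(V)}\ot\prec_\sigma)(\sigma_{AT})_1(\sigma_{AT})_2(T\ot T'\ot T'')$, I would: (i) rewrite $T\prec_\sigma T'=T_0\vee_{v_1}\cdots\vee_{v_k}(T_k*_\sigma T')$ by \meqref{ap}; (ii) pull $\sigma_{AT}$ through the outer $k$-fold grafting using Eq.~\meqref{av1} of the Lemma stated just above; (iii) apply the induction hypothesis for \meqref{ba1} of $*_\sigma$ to the subproduct $T_k*_\sigma T'$, which has strictly smaller leaf degree; and (iv) reassemble using the multiplicativity relations \meqref{beta} for $\beta$. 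The mirror identity $\sigma_{AT}(T\ot(T'\prec_\sigma T''))$ and the two $\succ_\sigma$-identities (now decomposing $T'$ into its branches and using \meqref{av2}) go the same way; these give \meqref{bta1} and \meqref{bta2}. The one genuinely new case is \meqref{bta3}: by \meqref{ac} the product $T\cdot_\sigma T'=T_0\vee_{v_1}\cdots\vee_{v_k}(T_k*_\sigma T_0')\vee_{v_{k+1}}\cdots\vee_{v_{k+l}}T_l'$ has graftings on both sides of the $*_\sigma$-product, so I would apply \meqref{av1} (resp.\ \meqref{av2}) to the outer grafting, then the induction hypothesis on the inner product, then \meqref{beta}; this yields both halves of \meqref{bta3}. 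Summing \meqref{bta1}--\meqref{bta3} then gives \meqref{ba1}, while \meqref{ba2} is immediate, closing the induction.

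The hard part will be the bookkeeping, not any new idea. In contrast with the binary case, where $\vee$ has a single decorated edge and Eqs.~\meqref{tv1}--\meqref{tv2} involve only a short product of operators, here $\vee=\vee_k$ has $k$ angular slots and Eqs.~\meqref{av1}--\meqref{av2} expand $\sigma_{AT}$ applied to a $k$-fold grafting into an alternating string of $2k$ operators of the three types $(\sigma_{AT})_\bullet$, $(\sigma_{V,\calp(V)})_\bullet$, $(\sigma_{\calp(V),V})_\bullet$; threading the correct subscripts through steps (ii)--(iv) and matching them against the two forms of \meqref{beta} is the delicate point. To keep this under control I would first carry out each computation purely at the level of $\beta$ on $T(V)$ --- so that all the genuine combinatorics reduce to \meqref{beta} and to the definitions of $\sigma_{AT},\sigma_{V,\calp(V)},\sigma_{\calp(V),V}$ --- and only afterwards translate back to $\calp(V)$. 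This confines the $k$-dependent combinatorics to the preceding Lemma and keeps the overall structure parallel to that of the proof of Theorem~\mref{bdbt}.
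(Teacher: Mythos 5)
Your proposal is correct and follows essentially the same route as the paper's proof: invoke the Loday--Ronco tridendriform structure from~\mcite{LR2}, then verify the compatibilities \meqref{bta1}--\meqref{bta3} (and hence \meqref{ba1}) by induction on the leaf degree, decomposing each tree into its branches, pulling $\sigma_{AT}$ through the $k$-fold grafting via Eqs.~\meqref{av1}--\meqref{av2}, applying the induction hypothesis to the inner $*_\sigma$-product, and reassembling with Eq.~\meqref{beta}. You also correctly identify \meqref{bta3} as the only genuinely new case beyond the dendriform argument of Theorem~\mref{bdbt}, which is exactly how the paper organizes the verification.
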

\begin{proof}
First note that $(\oline{\calp(V)},\prec_\sigma,\succ_\sigma,\cdot_\sigma)$ is exactly
the original tridendriform algebra structure $\left(\bigoplus_{n\geq1}\bk\cald_n\otimes V^{\otimes n},\prec,\succ,\cdot\right)$ in \cite[Corollary 2.9]{LR2} where the notation $\bk[T_n]$ is used instead of $\bk\cald_n$.

Now fix $(T;v_1,\dots,v_m)\in\bar{\cald}_m(V),\,(T';v_{m+1},\dots,v_{m+n})\in\bar{\cald}_n(V)$ and $(T'';v_{m+n+1},\dots,v_{m+n+l})\in\bar{\cald}_l(V)$ with $v_1,\dots,v_{m+n+l}\in V,\,m,n,l\geq0$. We still need to check conditions \meqref{bta1}--\meqref{bta3} together with condition~\meqref{ba1}
for $\calp(V)$, for which we apply induction on the leaf degree $d_l$.

The compatibility condition \meqref{bta1} (resp. \meqref{bta2}) between $\prec$ (resp. $\succ$) and $\sigma_{AT}$ can be checked as checking condition~\meqref{bda1} (resp. \meqref{bda2}) in the proof of Theorem \mref{bdbt}.
It remains to verify Eq.~\meqref{bta3}. The case when one of $T,T',T''$ is $|$ is clear. For the other case, suppose that $m,n,l\geq1$, $T=T_0\vee_{v_{i_1}}\cdots\vee_{v_{i_p}}T_p,\,T'=T'_0\vee_{v_{j_1}}\cdots\vee_{v_{j_q}}T'_q$ with $1\leq i_1<\cdots<i_p\leq m<j_1<\cdots<j_q\leq m+n$, then
\begin{align*}
\sigma&_{AT}((T\cdot_\sigma T')\otimes T'')(v_1\otimes\cdots\otimes v_{m+n+l})
=\sigma_{AT}((T_0\vee_{v_{i_1}}\cdots\vee_{v_{i_p}}(T_p*_\sigma T'_0)\vee_{v_{j_1}}\cdots\vee_{v_{j_q}}T'_q)\otimes T'')\\
&=(\Id_{\calp(V)}\otimes\vee)(\sigma_{AT})_1(\sigma_{V,\calp(V)})_2\cdots(\sigma_{V,\calp(V)})_{2(p+q)}(\sigma_{AT})_{2(p+q)+1}\\
&\quad\,(T_0\otimes v_{i_1}\otimes\cdots \otimes v_{i_p}\otimes(T_p*_\sigma T'_0) \otimes v_{j_1}\otimes\cdots \otimes v_{j_q} \otimes T'_q \otimes T'')\\
&=\left(\Id_{\calp(V)}\otimes\vee\left((\Id_{\calp(V)}\otimes\Id_V)^{\otimes p}\otimes*_\sigma\otimes(\Id_V\otimes\Id_{\calp(V)})^{\otimes q}\right)\right)\\
&\quad\,(\sigma_{AT})_1\cdots(\sigma_{AT})_{2p+1}(\sigma_{AT})_{2(p+1)}\cdots(\sigma_{AT})_{2(p+q+1)}\\
&\quad\,(T_0\otimes v_{i_1}\otimes\cdots\otimes v_{i_p}\otimes T_p\otimes T'_0\otimes v_{j_1}\otimes\cdots \otimes v_{j_q} \otimes T'_q \otimes T'')\\
&=(\Id_{\calp(V)}\otimes\cdot_\sigma)(\sigma_{AT})_1(\sigma_{AT})_2(T\otimes T' \otimes T''),
\end{align*}
where the second equality is due to Eq.~\meqref{av1} (with decorations $v_1,\dots,v_{m+n+l}$ suppressed); the third one holds by the induction hypothesis for condition \meqref{ba1} of $*_\sigma$; and the last one is by Eq.~\meqref{beta}.

It is similar to prove that
\[\sigma_{AT}(T\otimes (T'\cdot_\sigma T''))=(\cdot_\sigma\otimes \Id_{\calp(V)})(\sigma_{AT})_2(\sigma_{AT})_1(T\otimes T'\otimes T'').\]

So far we have shown that conditions \meqref{bta1}--\meqref{bta3} hold for $\calp(V)$, which also imply Eq.~\meqref{ba1} as $*_\sigma=\prec_\sigma+\succ_\sigma+\,\cdot_\sigma$ on $\oline{\calp(V)}$. Furthermore, Eq.~\meqref{ba2} is by the definition of $\sigma_{AT}$. Hence $(\oline{\calp(V)},\prec_\sigma,\succ_\sigma,\cdot_\sigma,\sigma_{AT})$ is a braided tridendriform algebra, and thus $(\calp(V),*_\sigma,\sigma_{AT})$ is a braided algebra.
\end{proof}

Next we show that $\oline{\calp(V)}$ is a free object in the category of braided tridendriform algebras, as a braided version of \cite[Theorem 2.6]{LR2}.

\begin{theorem}
	Given a braided vector space $(V,\sigma)$, the quintuple $(\oline{\calp(V)},\prec_\sigma,\succ_\sigma,\cdot_\sigma,\sigma_{AT})$ is the free braided tridendriform algebra on $(V,\sigma)$, characterized by the following universal property: for any braided tridendriform algebra $(R,\prec_R,\succ_R,\cdot_R,\tau)$ and a map $\varphi:V\rar R$ of braided vector spaces, there exists a unique homomorphism $\bar{\varphi}:\oline{\calp(V)}\rar R$ of braided tridendriform algebras such that $\varphi=\bar{\varphi}\circ i_V$.
\mlabel{fbtv}
\end{theorem}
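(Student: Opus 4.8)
The plan is to imitate the proof of Theorem~\mref{fbd}, replacing planar binary trees by planar angularly decorated trees, the grafting $Y\vee_v Y'$ by the arbitrary-arity grafting $\vee=\vee_k$, and the identities \meqref{tv1}--\meqref{tv2} by \meqref{av1}--\meqref{av2}. Given a braided tridendriform algebra $(R,\prec_R,\succ_R,\cdot_R,\tau)$ and a homomorphism $\varphi\colon V\to R$ of braided vector spaces, I would first define a linear map $\bar\varphi\colon\calp(V)\to R^+:=R\oplus\bk 1$ recursively by $\bar\varphi(\,|\,)=1$, $\bar\varphi(T[v])=\varphi(v)$, and, peeling off the leftmost branch in parallel with the recursions \meqref{ap}--\meqref{ac},
\[\bar\varphi(T_0\vee_{v_1}T_1)=\bar\varphi(T_0)\succ_R\bigl(\varphi(v_1)\prec_R\bar\varphi(T_1)\bigr),\]
\[\bar\varphi(T_0\vee_{v_1}T_1\vee_{v_2}\cdots\vee_{v_k}T_k)=\bar\varphi(T_0)\succ_R\bigl(\varphi(v_1)\cdot_R\bar\varphi(T_1\vee_{v_2}\cdots\vee_{v_k}T_k)\bigr)\quad(k\geq2),\]
for $T_0,\dots,T_k\in\bar{\cald}(V)$, $v_1,\dots,v_k\in V$, the unital conventions \meqref{us} making these consistent when branches degenerate to $|$. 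Checking as for \cite[Theorem 2.6]{LR2} (compare the proof of Theorem~\mref{fbd}), the restriction of $\bar\varphi$ to $\oline{\calp(V)}$ is then a homomorphism of tridendriform algebras into $R$, and $\varphi=\bar\varphi\circ i_V$ holds by construction.

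The heart of the argument is to verify $(\bar\varphi\otimes\bar\varphi)\sigma_{AT}=\tau(\bar\varphi\otimes\bar\varphi)$, which I would prove by induction on the leaf degree $d_l(T)$ of the first tensor factor. The cases $T=|$ and $T=T[v]$ follow from the definition of $\sigma_{AT}$, the extension of $\tau$ to $R^+$, and $\varphi$ being a morphism of braided vector spaces. For $T=T_0\vee_{v_1}\cdots\vee_{v_k}T_k$ with $k\geq1$ and $T'\in\bar{\cald}(V)$, I would use \meqref{av1} to rewrite $\sigma_{AT}(T\otimes T')$ as $(\Id_{\calp(V)}\otimes\vee)$ composed with the displayed braid word in $\sigma_{AT},\sigma_{V,\calp(V)},\sigma_{\calp(V),V}$ applied to $T_0\otimes v_1\otimes\cdots\otimes v_k\otimes T_k\otimes T'$; then apply $\bar\varphi\otimes\bar\varphi$, turn $\bar\varphi\circ\vee$ into the iterated $\succ_R$-$\cdot_R$-$\prec_R$ expression coming from the recursion above, and move $\tau$ to the left using the compatibilities \meqref{bta1}--\meqref{bta3} of $\tau$ with $\prec_R,\succ_R,\cdot_R$ (hence \meqref{ba1} for $*_R$), the braid identities \meqref{beta}, and the induction hypothesis applied to each branch $T_i$ and to the tail $T_1\vee_{v_2}\cdots\vee_{v_k}T_k$, all of strictly smaller leaf degree since $k\geq1$. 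Re-reading the recursion for $\bar\varphi$ then gives $(\bar\varphi\otimes\bar\varphi)\sigma_{AT}(T\otimes T')=\tau(\bar\varphi\otimes\bar\varphi)(T\otimes T')$. This is the verbatim tridendriform analogue of the displayed calculation in the proof of Theorem~\mref{fbd}.

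For uniqueness, any homomorphism $\psi\colon\oline{\calp(V)}\to R$ of braided tridendriform algebras with $\psi\circ i_V=\varphi$ is in particular a homomorphism of tridendriform algebras with $\psi\circ i_V=\varphi$, hence equals $\bar\varphi$ by the freeness of $\oline{\calp(V)}$ as a tridendriform algebra (\cite[Theorem 2.6]{LR2}). I expect the only real difficulty to be the combinatorial bookkeeping in the inductive step: because $\vee_k$ has arbitrary arity, the braid word provided by \meqref{av1} grows in length with $k$ and the recursion for $\bar\varphi$ unfolds into a long iterated product, so one must track carefully how $\tau$ distributes over the many tensor factors and how the unital relations \meqref{us} absorb degenerate branches; conceptually nothing beyond the tools already used for Theorems~\mref{bdbt} and \mref{fbd} is required.
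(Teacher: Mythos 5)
Your proposal is correct and follows essentially the same strategy as the paper's proof: a recursive definition of $\bar{\varphi}$ from the root decomposition $T_0\vee_{v_1}\cdots\vee_{v_k}T_k$, verification of the tridendriform homomorphism property by adapting \cite[Theorem 2.6]{LR2}, an induction on the leaf degree using Eq.~\meqref{av1} together with Eqs.~\meqref{bta1}--\meqref{bta3} for the braiding compatibility, and uniqueness from the fact that $\oline{\calp(V)}$ is generated by the elements $T[v]$ as a tridendriform algebra. The only deviation is your left-peeling recursion $\bar{\varphi}(T)=\bar{\varphi}(T_0)\succ_R\bigl(\varphi(v_1)\cdot_R\bar{\varphi}(T_1\vee_{v_2}\cdots\vee_{v_k}T_k)\bigr)$ for $k\geq 2$, which agrees with the paper's symmetric three-case formula by the relations \meqref{tps}, \meqref{tpc}, \meqref{tpsc} and \meqref{tsc}; the paper's version keeps both extreme branches $T_0$ and $T_k$ at the outermost level, which makes the compatibility with $\prec_\sigma$ and $\succ_\sigma$ (each of which modifies an extreme branch) slightly more direct to unwind, but this is a cosmetic rather than substantive difference.
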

\begin{proof}
Given any braided tridendriform algebra $(R,\prec_R,\succ_R,\cdot_R,\tau)$ and any map $\varphi:V\rar R$ of braided vector spaces, one can define a linear map $\bar{\varphi}:\calp(V)\rar R^+$ recursively as follows.
\begin{align*}
&\bar{\varphi}(\,|\,)=1,\,\bar{\varphi}(T[v_1])=\varphi(v_1),\\
&\bar{\varphi}(T_0\vee_{v_1}\cdots\vee_{v_k}T_k)=\begin{cases}
\bar{\varphi}(T_0)\succ_R\varphi(v_1)\prec_R\bar{\varphi}(T_1), &\text{if }k=1,\\
\bar{\varphi}(T_0)\succ_R((\varphi(v_1)\prec_R\bar{\varphi}(T_1))\cdot_R\varphi(v_2))\prec_R\bar{\varphi}(T_2),&\text{if }k=2,\\
\bar{\varphi}(T_0)\succ_R(\varphi(v_1)\cdot_R\bar{\varphi}(T_1\vee_{v_2}\cdots\vee_{v_{k-1}}T_{k-1})\cdot_R\varphi(v_k))\prec_R\bar{\varphi}(T_k),&\text{if }k>2,
\end{cases}
\end{align*}
for any $T_0,\dots,T_k\in\bar{\cald}(V),\,v_1,\dots,v_k\in V$ with $k\geq1$.
Adapting the proof of \cite[Theorem 2.6]{LR2}, we find that the restriction of $\bar{\varphi}$ to $\oline{\calp(V)}$ is a homomorphism of tridendriform algebras.
	
Next we show
\[(\bar{\varphi}\otimes\bar{\varphi})\sigma_{AT}(T\otimes T')=\tau(\bar{\varphi}\otimes\bar{\varphi})(T\otimes T')\]
for $(T;v_1,\dots,v_m)\in\bar{\cald}_m(V),\,(T';v_{m+1},\dots,v_{m+n})\in\bar{\cald}_n(V)$ by induction on the leaf degree $d_l$ of $T$. If one of $T=|\,$, it is clear by the definition of $\sigma_{AT}$.
If not, there are the three cases of $k=1$, $k=2$ or $k>2$ for $T=T_0\vee_{v_{i_1}}\cdots\vee_{v_{i_k}}T_k$ with $1\leq i_1<\cdots<i_k\leq m$. We only check the most involved case of $k>2$, namely $T=T_0\vee_{v_{i_1}}T^c\vee_{v_{i_k}}T_k$ with $T^c:=T_1\vee_{v_2}\cdots\vee_{v_{k-1}}T_{k-1}$.
\begin{align*}
(\bar{\varphi}&\otimes\bar{\varphi})\sigma_{AT}(T\otimes T')=(\bar{\varphi}\otimes\bar{\varphi})(T'\otimes T)\beta_{mn}(v_1\otimes \cdots\otimes v_{m+n})\\
&=(\bar{\varphi}\otimes(\bar{\varphi}\succ_R(\varphi\cdot_R\,\bar{\varphi}\,\cdot_R\varphi)\prec_R\bar{\varphi}))(T'\otimes T_0\otimes\Id_V\otimes T^c\otimes\Id_V \otimes T_k)\beta_{mn}(v_1\otimes \cdots\otimes v_{m+n})\\
&=(\bar{\varphi}\otimes(\bar{\varphi}\succ_R(\varphi\cdot_R\,\bar{\varphi}\,\cdot_R\varphi)\prec_R\bar{\varphi}))(\sigma_{AT})_1(\sigma_{V,\calp(V)})_2(\sigma_{AT})_3(\sigma_{V,\calp(V)})_4(\sigma_{AT})_5\\
&\quad\,(T_0\otimes v_{i_1}\otimes T^c\otimes v_{i_k} \otimes T_k\otimes T')\\
&=(\Id_R\otimes\succ_R\cdot_R\,\cdot_R\prec_R)\tau_1\tau_2\tau_3\tau_4\tau_5(\bar{\varphi}\otimes\varphi\otimes\bar{\varphi}\otimes\varphi\otimes\bar{\varphi}\otimes\bar{\varphi})(T_0\otimes v_{i_1}\otimes T^c\otimes v_{i_k} \otimes T_k\otimes T')\\
&=\tau((\bar{\varphi}\succ_R(\varphi\cdot_R\,\bar{\varphi}\,\cdot_R\varphi)\prec_R\bar{\varphi})\otimes\bar{\varphi})(T_0\otimes v_{i_1}\otimes T^c\otimes v_{i_k} \otimes T_k\otimes T')=\tau(\bar{\varphi}\otimes\bar{\varphi})(T\otimes T'),
\end{align*}
where the first and third equalities are by the definitions of $\sigma_{AT}$ and $\sigma_{V,\calp(V)}$; the second and last equalities are by the definition of $\bar{\varphi}$; the fourth equality uses the induction hypothesis and the fifth one is due to Eqs.~\meqref{bta1}--\meqref{bta3} on $R$. Hence, we have $(\bar{\varphi}\otimes\bar{\varphi})\sigma_{AT}=\tau(\bar{\varphi}\otimes\bar{\varphi})$.
	
By definition, $\bar{\varphi}$ is uniquely determined by its images on $T[v]$ for all $v\in V$ as a homomorphism of tridendriform algebras with $\bar{\varphi}(\,|\,)=1$, while $\varphi=\bar{\varphi}\circ i_V$ means that $\varphi(v)=\bar{\varphi}(T[v])$ for any $v\in V$. Hence, $(\oline{\calp(V)},\prec_\sigma,\succ_\sigma,\cdot_\sigma,\sigma_{AT})$ with $i_V:V\rar\oline{\calp(V)}$ has the desired universal property.
\end{proof}

\subsection{Free braided tridendriform algebras on braided algebras}
Throughout this subsection, we fix a braided algebra
$(A,\cdot_A,\sigma)$, and abbreviate the multiplication $a\cdot_A b\in A$ as $ab$ for simplicity.

Given a set $X$, a tree in $\cald(X)$ is called {\bf \lush} if for any vertex of the tree, its leaf branch can only appear as the leftmost or rightmost branch. In other words, the tree does not have two adjacent angles over a same vertex that are separated by a leaf. The subset of \lush trees of $\cald(X)$ is denoted by $\calg(X)$. Also denote $\calg$ for the set of \lush trees with no decorations.\\
\indent For example,
$\raisebox{-.1mm}{\xy 0;/r.3pc/:
	(3,6)*{};(0,2)*{}**\dir{-};
	(-3,6)*{};(0,2)*{}**\dir{-};
	(-1.5,4)*{};(0,6)*{}**\dir{-};
	(0,0)*{};(0,2)*{}**\dir{-};
	(0,4)*{\scriptstyle y};
	(-1.5,6)*{\scriptstyle x};
	\endxy}\,,\,\raisebox{-.1mm}{\xy 0;/r.3pc/:
(3,6)*{};(0,2)*{}**\dir{-};
(-3,6)*{};(0,2)*{}**\dir{-};
(1.5,8)*{};(0,6)*{}**\dir{-};
(-1.5,8)*{};(0,6)*{}**\dir{-};
(0,2)*{};(0,6)*{}**\dir{-};
(0,0)*{};(0,2)*{}**\dir{-};
(-1.5,6)*{\scriptstyle x};
(1.5,6)*{\scriptstyle z};
(0,8)*{\scriptstyle y};
\endxy}$ are in $\calg(X)$, but $\raisebox{-.1mm}{\xy 0;/r.3pc/:
	(3,6)*{};(0,2)*{}**\dir{-};
	(-3,6)*{};(0,2)*{}**\dir{-};
	(0,2)*{};(0,6)*{}**\dir{-};
	(0,0)*{};(0,2)*{}**\dir{-};
	(-1.5,6)*{\scriptstyle x};
	(1.5,6)*{\scriptstyle y};
	\endxy}$ is not in $\calg(X)$.
Define
\[\calp_A(A):=\bigoplus_{n\geq0}\bk\calg_n\otimes A^{\otimes n}\]
as a braided subspace of $\calp(A)$, and denote $\bar{\calg}(A):=\calp_A(A)\cap\bar{\cald}(A)$ such that $\calp_A(A)=\bk\bar{\calg}(A)$. Then $\calp_A(A)$ has the braided tridendtriform algebra structure $(\prec_\sigma,\succ_\sigma,\cdot_\sigma,\sigma_{AT})$ defined by Eqs.~\meqref{ap} -- \meqref{as} and
the following modification of Eq.~\meqref{ac},
\begin{equation}\mlabel{ac'}
T\cdot_\sigma |=|\cdot_\sigma T=0,\,T\cdot_\sigma T'=
\begin{cases}
T_0\vee_{a_1}\cdots\vee_{a_ka_{k+1}}\cdots\vee_{a_{k+l}}T'_l,&\text{if }T_k,T'_0=|\,,\\
T_0\vee_{a_1}\cdots\vee_{a_k}(T_k*_\sigma T'_0)\vee_{a_{k+1}}\cdots\vee_{a_{k+l}}T'_l,&\text{otherwise},
\end{cases}
\end{equation}
for any $T=T_0\vee_{a_1}\cdots\vee_{a_k} T_k\in\bar{\calg}_m(A),\,T'=T_0'\vee_{a_{k+1}} \cdots\vee_{a_{k+l}} T_l'\in\bar{\calg}_n(A)$ with $a_1,\dots,a_{k+l}\in A,\,1\leq k\leq m,1\leq l\leq n,\,m,n\geq1$.

\begin{prop}
The quintuple $(\calp_A(A),\prec_\sigma,\succ_\sigma,\cdot_\sigma,\sigma_{AT})$ defined by Eqs.~\meqref{ap}, \meqref{as} and \meqref{ac'} is a braided tridendtriform algebra. In particular, $\calp_{\bk^+\langle X\rangle}(\bk^+\langle X\rangle)$
is isomorphic to $\calp(\bk X)$ as braided tridendtriform algebras for any set $X$, where $\bk^+\langle X\rangle$ is the free nonunital algebra on $X$.
\end{prop}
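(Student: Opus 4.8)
The plan is to treat the two assertions separately; throughout, ``$\calp_A(A)$ is a braided tridendriform algebra'' should be read as the statement that its non-unital part $\oline{\calp_A(A)}:=\bigoplus_{n\geq1}\bk\calg_n\otimes A^{\otimes n}$ is one, with $\calp_A(A)$ its augmentation, exactly as for $\oline{\calp(V)}$ in Theorem~\mref{btat}. The first job is to check that the operations land back in $\calp_A(A)$, i.e.\ that $\prec_\sigma,\succ_\sigma,\cdot_\sigma$ as given by \meqref{ap}, \meqref{as}, \meqref{ac'} preserve \lush-ness (the only nontrivial point being that merging the leaf-adjacent decorations $a_k,a_{k+1}$ in \meqref{ac'} keeps all leaf-branches extremal), and then that $(\oline{\calp_A(A)},\prec_\sigma,\succ_\sigma,\cdot_\sigma)$ satisfies the tridendriform relations \meqref{tprec}--\meqref{tc}. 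The latter is a purely combinatorial fact about \lush trees angularly decorated by the algebra $A$ with the merging product \meqref{ac'} — the description of the free tridendriform algebra generated by an algebra, parallel to the way the same \lush trees carry free Rota--Baxter algebras over $A$ — and I would verify it by adapting the inductions in \cite[Theorem 2.6, Corollary 2.9]{LR2} together with \cite[\S4]{Gu1} and \mcite{EG2,Gu2}; this requires no braiding and I treat it as routine.

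The real work is the compatibility of $\sigma_{AT}$ with $\prec_\sigma,\succ_\sigma,\cdot_\sigma$, i.e.\ \meqref{bta1}--\meqref{bta3} (hence also \meqref{ba1}). I would run the induction on the leaf degree $d_l$ exactly as in the proof of Theorem~\mref{btat}. Since \meqref{ap} and \meqref{as} are unchanged there, the verifications for $\prec_\sigma$ and $\succ_\sigma$ go through verbatim, so only $\cdot_\sigma$ is new. Writing $T=T_0\vee_{a_1}\cdots\vee_{a_k}T_k$ and $T'=T'_0\vee_{a_{k+1}}\cdots\vee_{a_{k+l}}T'_l$, the subcase $T_k\neq|$ or $T'_0\neq|$ is handled word-for-word as the $\cdot$-case of Theorem~\mref{btat}, using the grafting identities \meqref{av1}--\meqref{av2}, the identities \meqref{beta} for $\beta$, and the inductive hypothesis for \meqref{ba1} of $*_\sigma$. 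The genuinely new subcase is $T_k=T'_0=|$, where \meqref{ac'} replaces the two adjacent decorations $a_k,a_{k+1}$ by the single decoration $a_ka_{k+1}\in A$; to push $\sigma_{AT}$ (equivalently an appropriate $\beta$) past this product one invokes the braided-algebra axiom \meqref{ba1} for $(A,\cdot_A,\sigma)$, which combined with \meqref{beta} says precisely that applying $\beta$ to a tensor with $a_ka_{k+1}$ in one slot equals first applying $\beta$ to the tensor with $a_k\otimes a_{k+1}$ in two adjacent slots and then multiplying in $A$. I expect this merge subcase to be the main obstacle; everything else follows the established pattern, and \meqref{ba2} for $\sigma_{AT}$ is immediate from its definition.

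For the isomorphism $\calp_{\bk^+\langle X\rangle}(\bk^+\langle X\rangle)\cong\calp(\bk X)$, give $\bk X$ the flip braiding, so $\bk^+\langle X\rangle$ with the induced braiding $\beta$ is a braided algebra. I would use the classical ``expand/collapse'' dictionary (see \cite[\S4.1]{Gu1}, \mcite{EG2,Gu2}): replacing an angle of a \lush tree carrying a monomial $x_1\cdots x_r$ by a comb of $r$ angles carrying $x_1,\dots,x_r$ separated by leaves produces a tree in $\cald(X)$, and the \lush condition is exactly what makes this a bijection between the \lush trees angularly decorated by nonempty monomials in $X$ and all of $\cald(X)$ angularly decorated by $X$. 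As nonempty monomials form a $\bk$-basis of $\bk^+\langle X\rangle$, this bijection extends $\bk$-linearly to a graded linear isomorphism $\Psi:\calp(\bk X)\to\calp_{\bk^+\langle X\rangle}(\bk^+\langle X\rangle)$, and it remains to check $\Psi$ intertwines the structure maps. For $\prec_\sigma,\succ_\sigma$ this is immediate, both being pure grafting \meqref{ap}--\meqref{as} and $\Psi$ being grafting-compatible. For $\cdot_\sigma$ the key point is that the merged angle $a_ka_{k+1}$ on the \lush side expands under $\Psi$ into exactly the configuration with a leaf between the angles $a_k$ and $a_{k+1}$ — which is what \meqref{ac} outputs on the $\calp(\bk X)$ side when $T_k=T'_0=|$, using $|*_\sigma|=|$ — while the remaining subcases of \meqref{ac'} match \meqref{ac} directly. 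Finally $\Psi$ intertwines $\sigma_{AT}$ because a length-$r$ monomial in one tensor slot corresponds to $r$ consecutive letter slots on the other side, so the identities \meqref{beta} carry $\beta$ on $T(\bk^+\langle X\rangle)$ to $\beta$ on $T(\bk X)$; for the flip braiding this is simply the fact that a block permutation of monomial slots restricts to a permutation of the underlying letter slots. Hence $\Psi$ is an isomorphism of braided tridendriform algebras.
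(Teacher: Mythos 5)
Your proposal is correct, and for the first assertion it follows essentially the same path as the paper: check that \meqref{ap}, \meqref{as}, \meqref{ac'} preserve \lush{}ness, verify the tridendriform relations \meqref{tprec}--\meqref{tc} by induction on $d_l$, and handle the braiding compatibilities as in Theorem~\mref{btat}. In fact you are more explicit than the paper on the one genuinely new point, namely that in the merge subcase $T_k=T'_0=|$ of \meqref{ac'} one must invoke the braided-algebra axiom \meqref{ba1} for $(A,\cdot_A,\sigma)$ together with \meqref{beta} to move the braiding past the product $a_ka_{k+1}$; the paper's proof only verifies a sample tridendriform identity and leaves the braiding compatibility implicit. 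For the second assertion you diverge from the paper: you build the expand/collapse bijection $\Psi$ by hand and then verify directly that it intertwines $\prec_\sigma,\succ_\sigma,\cdot_\sigma$ and $\sigma_{AT}$, whereas the paper simply notes that $j_X:\bk X\rar\calp_{\bk^+\langle X\rangle}(\bk^+\langle X\rangle)$, $x\mapsto T[x]$, is a map of braided vector spaces, invokes the universal property of $\calp(\bk X)$ from Theorem~\mref{fbtv} to obtain a homomorphism $\bar j_X$ of braided tridendriform algebras for free, and then observes that $\bar j_X$ restricts to the bijection $\cald(X)\to\calg(\sqcup_{n\geq1}X^n)$ -- which is exactly your collapse map. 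The two arguments share the same combinatorial core; the paper's route saves you all the intertwining checks at the cost of first identifying what $\bar j_X$ does on a general tree. Two small points: the statement does not single out the flip braiding, and indeed your argument via \meqref{beta} works for any braiding $\sigma$ on $\bk X$ with the induced $\beta$ on $\bk^+\langle X\rangle$, so you need not specialize; and $\Psi$ is not graded for the angle-number grading on $\calp_{\bk^+\langle X\rangle}(\bk^+\langle X\rangle)$ (only for the total letter degree), though nothing in the proof depends on this.
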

\begin{proof}
First by induction on the leaf degree $d_l$, one can easily see that Eqs.~\meqref{ap}, \meqref{as} and \meqref{ac'} give well-defined operators $\prec_\sigma,\succ_\sigma,\cdot_\sigma$ on $\calp_A(A)$. Next it is also straightforward to check by induction on $d_l$ that they satisfy tridendriform conditions in Eqs.~\meqref{tprec}-\meqref{tc}, while Eq.~\meqref{us} follows from definition. As an instance, we verify
\[(T\succ_\sigma T')\cdot_\sigma T''=T\succ_\sigma(T'\cdot_\sigma T'').\]
Indeed, for $T'=T'_0\vee_{v_1}\cdots\vee_{v_l}T''_l$ with $l>0$, then
\[(T\succ_\sigma T')\cdot_\sigma T''
=((T*_\sigma T'_0)\vee_{v_1}\cdots\vee_{v_l}T'_l)\cdot_\sigma T''=
T\succ_\sigma ((T'_0\vee_{v_1}\cdots\vee_{v_l}T'_l)\cdot_\sigma T'')
=T\succ_\sigma(T'\cdot_\sigma T'')\]
by definition. Otherwise, suppose that $T,T''\neq |\,,\,T'=|$\,, by Eq.~\meqref{us} we still have
\[(T\succ_\sigma T')\cdot_\sigma T''=0=T\succ_\sigma(T'\cdot_\sigma T'').\]	
In particular, when $A=\bk^+\langle X\rangle$, we clearly have a braided vector space embedding
\[j_X:\bk X\rar  \calp_{\bk^+\langle X\rangle}(\bk^+\langle X\rangle),x\mapsto T[x].\]
By the universal property of $\calp(\bk X)$ proved in Theorem \mref{fbtv}, there exists a unique homomorphism $\bar{j}_X:\calp(\bk X)\rar\calp_{\bk^+\langle X\rangle}(\bk^+\langle X\rangle)$ of braided tridendtriform algebras such that $j_X=\bar{j}_X\circ i_{\bk X}$. Then $\bar{j}_X$ is an isomorphism of braided tridendtriform algebras, as it maps $\cald(X)$ to $\calg(\sqcup_{n\geq1}X^n)$ bijectively.
\end{proof}

Let $\oline{\calp_A(A)}:=\bigoplus_{n\geq1}\bk\calg_n\otimes A^{\otimes n}$ as a braided subspace of $\calp_A(A)$ with the product $\cdot_\sigma$ and
\begin{equation}\mlabel{ria}
j_A: A\rar\oline{\calp_A(A)},\,a\mapsto T[a]
\end{equation}
be the natural embedding of braided algebras.
\begin{theorem}
Given a braided algebra $(A,\cdot,\sigma)$, the quintuple $(\oline{\calp_A(A)},\prec_\sigma,\succ_\sigma,\cdot_\sigma,\sigma_{AT})$ is the free braided tridendriform algebra on the braided algebra $(A,\cdot,\sigma)$. More precisely, it satisfies the following universal property: for any braided tridendriform algebra $(R,\prec_R,\succ_R,\cdot_R,\tau)$ and a homomorphism $\psi:(A,\cdot,\sigma)\rar (R,\cdot_R,\tau)$ of braided algebras, there exists a unique homomorphism $\hat{\psi}:\oline{\calp_A(A)}\rar R$ of braided tridendriform algebras such that $\psi=\hat{\psi}\circ j_A$.
\mlabel{fbta}
\end{theorem}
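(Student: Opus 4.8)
The plan is to build $\hat\psi$ by the obvious recursion on \lush trees, to observe that it is nothing but the restriction of the free-on-a-braided-space map of Theorem~\ref{fbtv} (so that compatibility with the braiding becomes automatic), and to concentrate the actual work on the modified product rule~\eqref{ac'}. Concretely, given a homomorphism $\psi:(A,\cdot,\sigma)\rar(R,\cdot_R,\tau)$ of braided algebras, define $\hat\psi:\calp_A(A)\rar R^+$ linearly and recursively by $\hat\psi(\,|\,)=1$, $\hat\psi(T[a])=\psi(a)$, and, on a \lush tree written in its unique form~\eqref{eq:rep} as $T_0\vee_{a_1}\cdots\vee_{a_k}T_k$, by the same case formula ($k=1,2,>2$) used for $\bar\varphi$ in the proof of Theorem~\ref{fbtv}. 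By~\eqref{eq:rep} this is unambiguous; since the branches $T_i$ (and also $T_1\vee_{a_2}\cdots\vee_{a_{k-1}}T_{k-1}$ when $k>2$) are again \lush of strictly smaller leaf degree, the recursion on $d_l$ is well founded. Comparing the two recursions shows that $\hat\psi$ is exactly the restriction to $\calp_A(A)\subseteq\calp(A)$ of the map $\bar\psi:\oline{\calp(A)}\rar R$ that Theorem~\ref{fbtv} produces from $\psi$ regarded merely as a homomorphism of braided vector spaces; in particular $\hat\psi$ maps $\oline{\calp_A(A)}$ into $R$ and $\hat\psi\circ j_A=\psi$.

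Next I would verify that $\hat\psi|_{\oline{\calp_A(A)}}$ is a homomorphism of tridendriform algebras for $\prec_\sigma,\succ_\sigma$ from~\eqref{ap},~\eqref{as} and $\cdot_\sigma$ from the \emph{modified} rule~\eqref{ac'}. Following the pattern of the $\bar\varphi$ computation in the proof of Theorem~\ref{fbtv} (ultimately the Loday--Ronco argument~\cite{LR2}), one reduces, by induction on $d_l$, the identities $\hat\psi\prec_\sigma=\prec_R(\hat\psi\otimes\hat\psi)$, $\hat\psi\succ_\sigma=\succ_R(\hat\psi\otimes\hat\psi)$ and $\hat\psi\,\cdot_\sigma=\cdot_R(\hat\psi\otimes\hat\psi)$ to manipulations with the tridendriform relations~\eqref{tprec}--\eqref{tc} on $R$, \emph{except} at the clause of~\eqref{ac'} in which $T\cdot_\sigma T'$ has $T_k=T'_0=|\,$ and the adjacent angular decorations $a_k$ of $T$ and $a_{k+1}$ of $T'$ fuse into $a_ka_{k+1}$. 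There, to match $\hat\psi$ of the merged \lush tree, one expands $\hat\psi(T)\cdot_R\hat\psi(T')$ from the case formula, uses~\eqref{tps} together with~\eqref{tpc}--\eqref{tc} to slide the product $\cdot_R$ to the junction point so that it sits between the factors $\psi(a_k)$ and $\psi(a_{k+1})$, and then invokes multiplicativity, $\psi(a_k)\cdot_R\psi(a_{k+1})=\psi(a_ka_{k+1})$. This is the one place where $A$ being an algebra and $\psi$ being multiplicative are used.

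Compatibility with the braiding is then inherited rather than proved: $\sigma_{AT}$ merely interchanges the two underlying trees, hence carries $\calp_A(A)\otimes\calp_A(A)$ into itself, and $\bar\psi$ satisfies $(\bar\psi\otimes\bar\psi)\sigma_{AT}=\tau(\bar\psi\otimes\bar\psi)$ by Theorem~\ref{fbtv}, so the same identity survives restriction to $\oline{\calp_A(A)}$. Thus $\hat\psi:\oline{\calp_A(A)}\rar R$ is a homomorphism of braided tridendriform algebras with $\psi=\hat\psi\circ j_A$. For uniqueness, note that $\oline{\calp_A(A)}$ is generated as a braided tridendriform algebra by $j_A(A)=\{T[a]\mid a\in A\}$: by induction on $d_l$, $T_0\vee_{a_1}T_1=(T_0\succ_\sigma T[a_1])\prec_\sigma T_1$ handles valence one (using $T\succ_\sigma T[a]=T\vee_a|\,$ and $T[a]\prec_\sigma T'=\,|\vee_a T'$ read off from~\eqref{ap},~\eqref{as}), while for $k\ge2$ one has $T_0\vee_{a_1}\cdots\vee_{a_k}T_k=(T_0\vee_{a_1}\cdots\vee_{a_{k-1}}T_{k-1})\cdot_\sigma(\,|\vee_{a_k}T_k)$, since $T_{k-1}\ne|\,$ by lushness places~\eqref{ac'} in its ``otherwise'' clause; hence any homomorphism of braided tridendriform algebras on $\oline{\calp_A(A)}$ is determined by its values on the $T[a]$.

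The hard part will be the merging clause of the second paragraph: verifying, for arbitrary valences $k$ and $l$, that the nested $\succ_R/\prec_R/\cdot_R$ expressions for $\hat\psi(T)$ and $\hat\psi(T')$ genuinely recombine, under~\eqref{tprec}--\eqref{tc}, into $\hat\psi$ of the single merged \lush tree, with the factor $\psi(a_k)\cdot_R\psi(a_{k+1})$ emerging in exactly the slot where $\psi(a_ka_{k+1})$ is wanted. Everything else is either inherited from Theorem~\ref{fbtv} (the braiding identity for $\hat\psi$) or a faithful transcription of the non-braided Loday--Ronco argument~\cite{LR2} (the $\prec_\sigma,\succ_\sigma$ homomorphism properties and the non-merging part of $\cdot_\sigma$).
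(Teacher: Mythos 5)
Your proposal is correct and follows essentially the same route as the paper: define $\hat\psi$ by the same case recursion as $\bar\varphi$ in Theorem~\mref{fbtv} (so that the braiding identity is inherited by restriction), check the tridendriform homomorphism property by induction on $d_l$ with the only genuinely new work at the merging clause of Eq.~\meqref{ac'}, where multiplicativity of $\psi$ supplies $\psi(a_k)\cdot_R\psi(a_{k+1})=\psi(a_ka_{k+1})$, and conclude uniqueness from generation by the $T[a]$. Your uniqueness step is in fact slightly more explicit than the paper's, which simply asserts that $\hat\psi$ is determined by its values on the $T[a]$ without writing out the decompositions.
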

\begin{proof}
Similar to the map $\bar{\varphi}:\calp(V)\rar R^+$ given in Theorem \mref{fbtv}, we define a linear map $\hat{\psi}:\calp_A(A)\rar R^+$ recursively as follows.
\begin{align*}
&\hat{\psi}(\,|\,)=1,\,\hat{\psi}(T[a_1])=\psi(a_1),\\
&\hat{\psi}(T_0\vee_{a_1}\cdots\vee_{a_k}T_k)=\begin{cases}
\hat{\psi}(T_0)\succ_R\psi(a_1)\prec_R\hat{\psi}(T_1),&\text{if }k=1,\\
\hat{\psi}(T_0)\succ_R\left((\psi(a_1)\prec_R\hat{\psi}(T_1))\cdot_R\psi(a_2)\right)\prec_R\hat{\psi}(T_2),&\text{if }k=2,\\
\hat{\psi}(T_0)\succ_R\left(\psi(a_1)\cdot_R\hat{\psi}(T_1\vee_{a_2}\cdots\vee_{a_{k-1}}T_{k-1})\cdot_R\psi(a_k)\right)\prec_R\hat{\psi}(T_k),&\text{if }k>2,
\end{cases}
\end{align*}
for any $T_0,\dots,T_k\in\bar{\calg}(A),\,a_1,\dots,a_k\in A$ with $k\geq2$ such that $T_0\vee_{a_1}\cdots\vee_{a_k}T_k\in\bar{\calg}(A)$.

Note that $\hat{\psi}$ is clearly well-defined and is just the restriction of $\bar{\varphi}$ to $\calp_A(A)$ as braided vector space maps. Next we check that $\hat{\psi}$ is a homomorphism of braided tridendriform algebras. In fact, it follows the same steps as in the proof of \cite[Theorem 2.6]{LR2}.

First we show that
\[\hat{\psi}(T\prec_\sigma T')=\hat{\psi}(T)\prec_\sigma\hat{\psi}(T'),\,T,T'\in\bar{\calg}(A),\]
by induction on the degree $d_l(T)$. When $T$ is $|$\,, it is clear by the definition of $\hat{\psi}$ and Eq.~\meqref{us}. For $T=T_0\vee_{a_1}\cdots\vee_{a_k}T_k,\,k\geq1$,
we can use the same argument to check the cases when $k=1,2$ and $k>2$. For instance, if $k>2$, we have
\begin{align*}
\hat{\psi}(T\prec_\sigma T')&=\hat{\psi}(T_0\vee_{a_1}\cdots\vee_{a_k}(T_k*_\sigma T'))\\
&=\hat{\psi}(T_0)\succ_R\left(\psi(a_1)\cdot_R\hat{\psi}(T_1\vee_{a_2}\cdots\vee_{a_{k-1}}T_{k-1})\cdot_R\psi(a_k)\right)\prec_R\hat{\psi}(T_k*_\sigma T')\\
&=\hat{\psi}(T_0)\succ_R\left(\psi(a_1)\cdot_R\hat{\psi}(T_1\vee_{a_2}\cdots\vee_{a_{k-1}}T_{k-1})\cdot_R\psi(a_k)\right)\prec_R(\hat{\psi}(T_k)*_R\hat{\psi}(T'))\\
&=\left(\hat{\psi}(T_0)\succ_R\left(\psi(a_1)\cdot_R\hat{\psi}(T_1\vee_{a_2}\cdots\vee_{a_{k-1}}T_{k-1})\cdot_R\psi(a_k)\right)\prec_R(\hat{\psi}(T_k)\right)\prec_R\hat{\psi}(T')\\
&=\hat{\psi}(T)\prec_R\hat{\psi}(T'),
\end{align*}
where the first equality is due to Eq.~\meqref{ap}; the second and last equalities are by
the definition of $\hat{\psi}$; the third equality uses the induction hypothesis and the fourth one is by Eq.~\meqref{tprec}.  The identity
\[\hat{\psi}(T\succ_\sigma T')=\hat{\psi}(T)\succ_\sigma\hat{\psi}(T'),\,T,T'\in\bar{\calg}(A),\]
is similar to check.

To show that $\hat{\psi}(T\cdot_\sigma T')=\hat{\psi}(T)\cdot_\sigma\hat{\psi}(T')$ for any $T,T'\in\bar{\calg}(A)$, we first check the case when $T=T_0\vee_{a_1}T_1$ and $T'=T'_0\vee_{a_2}T'_1$. If furthermore $T$ or $T'$ is $|$\,, the verification is easy. If not, then
\begin{align*}
\hat{\psi}(T&\cdot_\sigma T')
=\begin{cases}
\hat{\psi}(T_0\vee_{a_1a_2}T'_1),&\text{if }T_1,T'_0=|\\
\hat{\psi}(T_0\vee_{a_1}(T_1*_\sigma T'_0)\vee_{a_2}T'_1),&\text{otherwise}
\end{cases}\\
&=\begin{cases}
\hat{\psi}(T_0)\succ_R\psi(a_1a_2)\prec_R\hat{\psi}(T'_1),&\text{if }T_1,T'_0=|\\
\hat{\psi}(T_0)\succ_R\left(\left(\psi(a_1)\prec_R\hat{\psi}(T_1*_\sigma T'_0)\right)\cdot_R\psi(a_2)\right)\prec_R\hat{\psi}(T'_1),&\text{otherwise}
\end{cases}\\
&=\begin{cases}
\hat{\psi}(T_0)\succ_R\left(\psi(a_1)\cdot_R\psi(a_2)\right)\prec_R\hat{\psi}(T'_1),&\text{if }T_1,T'_0=|\\
\hat{\psi}(T_0)\succ_R\left(\left(\psi(a_1)\prec_R\left(\hat{\psi}(T_1)*_R \hat{\psi}(T'_0)\right)\right)\cdot_R\psi(a_2)\right)\prec_R\hat{\psi}(T'_1),&\text{otherwise}
\end{cases}\\
&=\begin{cases}
\hat{\psi}(T_0)\succ_R\left(\psi(a_1)\cdot_R\psi(a_2)\right)\prec_R\hat{\psi}(T'_1),&\text{if }T_1,T'_0=|\\
\hat{\psi}(T_0)\succ_R\left(\psi(a_1)\cdot_R \left(\hat{\psi}(T'_0)\succ_R\psi(a_2)\right)\right)\prec_R\hat{\psi}(T'_1),&\text{if }T_1=|\,,T'_0\neq|\\
\hat{\psi}(T_0)\succ_R\left(\left(\psi(a_1)\prec_R\hat{\psi}(T_1)\right)\cdot_R\psi(a_2)\right)\prec_R\hat{\psi}(T'_1),&\text{if }T_1\neq|\,,T'_0=|\\
\hat{\psi}(T_0)\succ_R\left(\left(\left(\psi(a_1)\prec_R\hat{\psi}(T_1)\right)\prec_R \hat{\psi}(T'_0)\right)\cdot_R\psi(a_2)\right)\prec_R\hat{\psi}(T'_1),&\text{otherwise}
\end{cases}\\
&=\begin{cases}
\left(\hat{\psi}(T_0)\succ_R\psi(a_1)\right)\cdot_R\left(\psi(a_2)\prec_R\hat{\psi}(T'_1)\right),&\text{if }T_1,T'_0=|\\
\left(\hat{\psi}(T_0)\succ_R\psi(a_1)\right)\cdot_R \left(\hat{\psi}(T'_0)\succ_R\psi(a_2)\prec_R\hat{\psi}(T'_1)\right),&\text{if }T_1=|\,,T'_0\neq|\\
\left(\hat{\psi}(T_0)\succ_R\psi(a_1)\prec_R\hat{\psi}(T_1)\right)\cdot_R \left(\psi(a_2)\prec_R\hat{\psi}(T'_1)\right),&\text{if }T_1\neq|\,,T'_0=|\\
\left(\hat{\psi}(T_0)\succ_R\psi(a_1)\prec_R\hat{\psi}(T_1)\right)\cdot_R \left(\hat{\psi}(T'_0)\succ_R\psi(a_2)\prec_R\hat{\psi}(T'_1)\right),&\text{otherwise}
\end{cases}\\
&=\hat{\psi}(T)\cdot_R\hat{\psi}(T'),
\end{align*}
where the first equality is due to Eq.~\meqref{ac'}; the second and last equalities are by
the definition of $\hat{\psi}$; the third equality uses the induction hypothesis; the fourth equality is by Eqs.~\meqref{tprec} and \meqref{tpsc}; the fifth one is by Eqs.~\meqref{tps},  \meqref{tpc}--\meqref{tsc}.
Then the general case of $T=T_0\vee_{a_1}\cdots\vee_{a_k}T_k,\,T'=T'_0\vee_{a_{k+1}}\cdots\vee_{a_{k+l}}T'_l,\,k,l\geq2$ follows from a similar argument by induction on the degree $d_l$ of trees.

Further, the identity
$(\hat{\psi}\otimes\hat{\psi})\sigma_{AT}(T\otimes T')=\tau(\hat{\psi}\otimes\hat{\psi})(T\otimes T')$
for $T,T'\in\bar{\calg}(A)$ can be shown in the same way as in the proof of Theorem \mref{fbtv}.

By definition, $\hat{\psi}$ is uniquely determined by its images on $T[a]$ for all $a\in A$ as a homomorphism of tridendriform algebras with $\hat{\psi}(\,|\,)=1$, while $\psi=\hat{\psi}\circ j_A$ gives $\psi(a)=\hat{\psi}(T[a])$ for any $a\in A$. Hence, $(\oline{\calp_A(A)},\prec_\sigma,\succ_\sigma,\cdot_\sigma,\sigma_{AT})$ with $j_A:A\rar\oline{\calp_A(A)}$ has the desired universal property.
\end{proof}

Let $I_A$ be the tridendriform ideal of $\oline{\calp(A)}$ generated by
\[T[a]\cdot_\sigma T[b]-T[a b],\,\forall a,b\in A,\]
then clearly $I_A$ is also an (algebraic) ideal of $\calp(A)$.
Obviously, $I_A$ is a braided subspace of $\oline{\calp(A)}$. Thus we also obtain a braided tridendriform quotient algebra $(\oline{\calp(A)}/I_A,\prec_\sigma,\succ_\sigma,\cdot_\sigma,\sigma_{AT})$.
\begin{coro}
$\oline{\calp_A(A)}$ is isomorphic to $\oline{\calp(A)}/I_A$ as braided tridendriform algebras. In particular, $\oline{\calp(A)}=\oline{\calp_A(A)}\oplus I_A$, thus $\calp(A)=\calp_A(A)\oplus I_A$, as braided vector spaces.
\mlabel{quo}
\end{coro}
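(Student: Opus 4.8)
The plan is to recognize $\overline{\calp(A)}/I_A$, equipped with the composite $A\xrightarrow{i_A}\overline{\calp(A)}\xrightarrow{q}\overline{\calp(A)}/I_A$ (where $q$ is the canonical projection and $i_A$ denotes the embedding of \meqref{eq:vtaemb} with $V=A$), as \emph{the} free braided tridendriform algebra on the braided algebra $(A,\cdot,\sigma)$, and then conclude by uniqueness of free objects together with Theorem~\mref{fbta}. First I would check that $\bar\imath:=q\circ i_A$ is a homomorphism of braided algebras: since $T[a]\cdot_\sigma T[b]-T[ab]$ is one of the generators of $I_A$, the relation $\bar\imath(a)\cdot_\sigma\bar\imath(b)=\bar\imath(ab)$ holds in $\overline{\calp(A)}/I_A$, while compatibility with the braidings is inherited from $\sigma_{AT}$ because $I_A$ is a braided subspace of $\overline{\calp(A)}$.

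Next I would produce two mutually inverse homomorphisms. Applying the universal property of Theorem~\mref{fbta} to the braided algebra homomorphism $\bar\imath$ gives a unique homomorphism of braided tridendriform algebras $F\colon\overline{\calp_A(A)}\to\overline{\calp(A)}/I_A$ with $F\circ j_A=\bar\imath$, where $j_A$ is the embedding of \meqref{ria}. In the other direction, $j_A\colon A\to\overline{\calp_A(A)}$ is in particular a homomorphism of braided vector spaces, so Theorem~\mref{fbtv} (with $V=A$) yields a unique homomorphism of braided tridendriform algebras $G_0\colon\overline{\calp(A)}\to\overline{\calp_A(A)}$ with $G_0\circ i_A=j_A$. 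Since $\ker G_0$ is a tridendriform ideal and $G_0\bigl(T[a]\cdot_\sigma T[b]-T[ab]\bigr)=T[a]\cdot_\sigma T[b]-T[ab]=0$ in $\overline{\calp_A(A)}$ --- the vanishing being the case $T_k,T'_0=|$ of \meqref{ac'} --- we get $I_A\subseteq\ker G_0$, so $G_0$ factors as $G_0=G\circ q$ for a unique homomorphism of braided tridendriform algebras $G\colon\overline{\calp(A)}/I_A\to\overline{\calp_A(A)}$ with $G\circ\bar\imath=j_A$. On one hand $(G\circ F)\circ j_A=G\circ\bar\imath=j_A$, so $G\circ F=\mathrm{id}$ by the uniqueness in Theorem~\mref{fbta}. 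On the other hand $F\circ G\circ q$ and $q$ are homomorphisms of braided tridendriform algebras $\overline{\calp(A)}\to\overline{\calp(A)}/I_A$ whose composites with $i_A$ both equal $q\circ i_A$, so by the uniqueness in Theorem~\mref{fbtv} they agree; hence $F\circ G\circ q=q$, and $F\circ G=\mathrm{id}$ since $q$ is surjective. Thus $F$ is an isomorphism of braided tridendriform algebras $\overline{\calp_A(A)}\cong\overline{\calp(A)}/I_A$ with $G=F^{-1}$; in particular $G$ is injective, so $\ker G_0=\ker q=I_A$.

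For the direct sum decomposition I would prove that the inclusion of underlying braided vector spaces $\iota\colon\overline{\calp_A(A)}\hookrightarrow\overline{\calp(A)}$ --- recall $\calp_A(A)$ was defined as a braided subspace of $\calp(A)$ --- is a section of $G_0$, i.e. $G_0\circ\iota=\mathrm{id}$. Applying the uniqueness clauses of Theorems~\mref{fbtv} and \mref{fbta} to the identity maps shows that the endomorphisms of $\overline{\calp(A)}$ and of $\overline{\calp_A(A)}$ induced respectively by $i_A$ and by $j_A$ are both the identity; reading off the recursive formulas in the proofs of those theorems, this says that for any tree the grafting expression $T_0\vee_{a_1}\cdots\vee_{a_k}T_k$ coincides with the nested expression in $\succ_\sigma,\cdot_\sigma,\prec_\sigma$ and the letters $T[a_i]$ evaluated in $\overline{\calp(A)}$ (and, for a \lush tree, with the analogous nested expression evaluated in $\overline{\calp_A(A)}$). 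One then proves $G_0(\iota(T))=T$ for all $T\in\bar{\calg}(A)$ by induction on the leaf degree $d_l$: the cases $T=|$ and $T=T[a]$ are immediate from $G_0\circ i_A=j_A$, and for $T=T_0\vee_{a_1}\cdots\vee_{a_k}T_k$ one applies the tridendriform homomorphism $G_0$ to the nested expression for $\iota(T)$ in $\overline{\calp(A)}$, uses $G_0(T[a_i])=T[a_i]$, invokes the induction hypothesis for the branches $T_0,\dots,T_k$ and for $T_1\vee_{a_2}\cdots\vee_{a_{k-1}}T_{k-1}$ --- all of them \lush of strictly smaller leaf degree, precisely because in a \lush tree the only leaf branches sit at the two extremes --- and recognizes the outcome as the nested expression for $T$ evaluated in $\overline{\calp_A(A)}$, which equals $T$. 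Granting $G_0\circ\iota=\mathrm{id}$ and $\ker G_0=I_A$, the decomposition is formal: any $z\in\overline{\calp(A)}$ is $z=\iota(G_0(z))+\bigl(z-\iota(G_0(z))\bigr)$ with the second summand in $\ker G_0=I_A$, while $\iota(\overline{\calp_A(A)})\cap I_A=0$ since $G_0$ restricts to the identity on $\iota(\overline{\calp_A(A)})$. Thus $\overline{\calp(A)}=\overline{\calp_A(A)}\oplus I_A$, and adjoining the common unit $|$ gives $\calp(A)=\calp_A(A)\oplus I_A$ as braided vector spaces.

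The step I expect to be the main obstacle is the identity $G_0\circ\iota=\mathrm{id}$: the abstract $\overline{\calp_A(A)}$ and its image in $\overline{\calp(A)}$ carry \emph{different} products $\cdot_\sigma$ (the modified rule \meqref{ac'} versus the original \meqref{ac}), so $\iota$ is not a morphism of tridendriform algebras and no universal property applies directly; the induction must be carried out explicitly, and the \lush hypothesis is exactly what ensures that the recursive construction of $\iota(T)$ never triggers a merge, so that pushing it through $G_0$ reproduces the recursion internal to $\overline{\calp_A(A)}$. The remaining steps are a routine chase with the two universal properties.
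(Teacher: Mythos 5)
Your proposal is correct and follows essentially the same route as the paper: two applications of the universal properties of Theorems~\mref{fbtv} and \mref{fbta} produce the mutually inverse maps (your $F,G$ are the paper's $\hat{\pi}_A,\tilde{j}_A$, and your $G_0$ is its $\bar{j}_A$), and the splitting comes from exhibiting the inclusion $\iota$ as a section of $G_0$. The only difference is cosmetic: you establish $G_0\circ\iota=\mathrm{id}$ by an explicit induction on leaf degree, whereas the paper gets it from the identification $\hat{\pi}_A=\bar{\pi}_A\circ\iota_A$ (asserted via the explicit form $T\mapsto T+I_A$ of $\hat{\pi}_A$) together with $\tilde{j}_A\circ\hat{\pi}_A=\mathrm{id}$ --- your version just makes explicit the verification, correctly flagged as the point where the two products $\cdot_\sigma$ differ, that the paper leaves implicit.
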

\begin{proof}
Since $i_A$ in Eq.~\meqref{eq:vtaemb} has it image contained in $\oline{\calp_A(A)}$, the map $j_A:A\rar \oline{\calp_A(A)}$ given in Eq.~\meqref{ria} is this restriction.
Namely, let $\iota_A:\oline{\calp_A(A)}\rar\oline{\calp(A)}$ be the natural inclusion as braided vector spaces, then $i_A=\iota_A\circ j_A$. By Theorem \mref{fbtv}, there exists a unique homomorphism
\[\bar{j}_A:\oline{\calp(A)}\rar \oline{\calp_A(A)}\]
of braided tridendriform algebras such that $\bar{j}_A(\,|\,)=|$ and $j_A=\bar{j}_A\circ i_A$. In particular, $\bar{j}_A(T[a])=T[a]$ for all $a\in A$, and thus $\bar{j}_A(I_A)=0$ by Eq.~\meqref{ac'} on  $\oline{\calp_A(A)}$. It induces a unique homomorphism
\[\tilde{j}_A:\oline{\calp(A)}/I_A\rar\oline{\calp_A(A)}\]
of braided tridendriform algebras such that $\bar{j}_A=\tilde{j}_A\circ\bar{\pi}_A$,
where $\bar{\pi}_A:\oline{\calp(A)}\rar\oline{\calp(A)}/I_A$ is the canonical projection.

On the other hand, let $\pi_A:=\bar{\pi}_A\circ i_A$, then $\pi_A:A\rar\oline{\calp(A)}/I_A$ is an algebra homomorphism such that $\pi_A(a)=T[a]+I_A$ for $a\in A$ by the definition of $I_A$. Using the universal property of $\oline{\calp_A(A)}$ in Theorem \mref{fbta}, we have a unique homomorphism
\[\hat{\pi}_A:\oline{\calp_A(A)}\rar\oline{\calp(A)}/I_A,\,T\mapsto T+I_A,\,T\in\bar{\calg}(A),\]
of braided tridendriform algebras such that $\hat{\pi}_A(\,|\,)=|+I_A$ and $\pi_A=\hat{\pi}_A\circ j_A$. Therefore, we have
\[(\tilde{j}_A\circ\hat{\pi}_A)\circ j_A=\tilde{j}_A\circ\pi_A=\tilde{j}_A\circ(\bar{\pi}_A\circ i_A)=\bar{j}_A\circ i_A=j_A=\Id_{\oline{\calp_A(A)}}\circ j_A.\]
Again by the universal property of $\oline{\calp_A(A)}$, we know that $\tilde{j}_A\circ\hat{\pi}_A=\Id_{\oline{\calp_A(A)}}$.
Then
\[(\hat{\pi}_A\circ\tilde{j}_A\circ\bar{\pi}_A)\circ i_A=(\hat{\pi}_A\circ\tilde{j}_A)\circ\pi_A=(\hat{\pi}_A\circ\tilde{j}_A)\circ(\hat{\pi}_A\circ j_A)=\hat{\pi}_A\circ j_A=\pi_A=\bar{\pi}_A\circ i_A.\]
Now by the universal property of $\oline{\calp(A)}$ in Theorem \mref{fbtv} instead, we have $\bar{\pi}_A=\hat{\pi}_A\circ\tilde{j}_A\circ\bar{\pi}_A$, thus $\hat{\pi}_A\circ\tilde{j}_A=\Id_{\oline{\calp(A)}/I_A}$ for $\bar{\pi}_A$ is surjective. This means that $\hat{\pi}_A:\oline{\calp_A(A)}\rar\oline{\calp(A)}/I_A$ is an isomorphism of braided tridendriform algebras.
Also, $\hat{\pi}_A=\bar{\pi}_A\circ\iota_A$ by definition. Hence,
\[\bar{j}_A\circ\iota_A=(\tilde{j}_A\circ\bar{\pi}_A)\circ\iota_A=\tilde{j}_A\circ\hat{\pi}_A=\Id_{\oline{\calp_A(A)}},\]
which means $\iota_A$ is a section of $\bar{j}_A$. In particular, $\oline{\calp(A)}=\oline{\calp_A(A)}\oplus I_A$ as braided vector spaces. 	
\end{proof}	
\delete{
In summary, we have the following commutative diagram.
\[\xymatrix@=2.5em{\oline{\calp(A)}\ar@{->}[drr]^-{\bar{\pi}_A}\ar@<-.5ex>@{.>}[rdd]^-{\bar{j}_A}&&\\
&A\ar@{->}[ul]_<<<<{i_A}\ar@{->}[r]^-{\pi_A}\ar@{->}[d]_-{j_A}\ar@{->}[dr]^<<<<<<{\psi}&\oline{\calp(A)}/I_A\ar@<.5ex>@{.>}[dl]^<<<{\tilde{j}_A}|\hole\ar@{.>}[d]^-{\tilde{\psi}}\\
&\oline{\calp_A(A)}\ar@<1.7ex>@{->}[uul]^-{\iota_A}\ar@<.5ex>@{.>}[ru]^<<<{\hat{\pi}_A}|\hole\ar@{.>}[r]^-{\hat{\psi}}&R}\]
}

\subsection{The braided Hopf algebra of planar angularly decorated (\lush) trees}

As noted above, free dendriform algebras can be realized on the space of planar binary rooted trees. Equipped with the coproduct in Eq.~\meqref{lrco}, we have the well-known Loday-Ronco Hopf algebra. Extending such Hopf algebra structure to free tridendriform algebras, Loday~\mcite{Lo2} worked in the more general context of $\calp$-algebras for operads $\calp$ that satisfy certain coherent unit action property. See also~\mcite{EG1}.
We adapt Loday's approach to equip the free braided tridendriform algebra of planar rooted trees with a braided Hopf algebra structure, without going into the full generality of operads as in~\mcite{Lo2}.
\begin{prop}
For any braided tridendriform algebra $(R,\prec,\succ,\cdot,\sigma)$,
denote
\[R\boxtimes R:=(R\otimes\bk)\oplus(\bk\otimes R)\oplus(R\otimes R).\]
Then $R\boxtimes R$ has a braided tridendriform algebra structure $(R\boxtimes R,\prec,\succ,\cdot,\beta)$ defined as follows,
\begin{equation}\mlabel{tpta}
(x\otimes y)\boxdot(x'\otimes y')=
\begin{cases}
(*\otimes\odot)\sigma_2(x\otimes y\otimes x'\otimes y'),&y\text{ or }y'\in R,\\
(x\odot x')\otimes yy'&y,y'\in\bk,
\end{cases}
\end{equation}
where $*:=\prec +\succ +\cdot$ is the product on $R^+$ previously defined in Definition \mref{tri}, and $\boxdot$ (resp. $\odot$) represents any of the operations $\prec,\succ,\cdot$ on $R\boxtimes R$ (resp. $R^+$).
Furthermore, let $\beta:=\beta_{2,2}$ be a braiding on $R^+\otimes R^+$. Then the augmentation $(R\boxtimes R)^+\cong R^+\otimes R^+$ has the braided unital algebra structure as defined in Definition \mref{btri}.
\mlabel{tpt}
\end{prop}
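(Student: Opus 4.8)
The plan is to prove the two assertions separately, regarding $R\boxtimes R$ throughout as the codimension-one subspace of $R^+\otimes R^+$ complementary to $\bk(1\otimes 1)$, so that $(R\boxtimes R)^+=R^+\otimes R^+$ with unit $1\otimes 1$. I would first record two easy preliminaries. First, $\beta:=\beta_{2,2}$ --- which by \meqref{beta} equals $\sigma_2\sigma_1\sigma_3\sigma_2$ on $(R^+)^{\otimes 4}$ --- is a braiding on $R^+\otimes R^+$, being the lift $T^\sigma_{\chi_{2,2}}$ and hence satisfying \meqref{eq:qybe} by the same reasoning that makes $\beta$ a braiding on $T(R^+)$; moreover it restricts to $R\boxtimes R$, since $\sigma$ respects the splitting $R^+=\bk 1\oplus R$ (by \meqref{ba2} together with $\sigma(R\otimes R)\subseteq R\otimes R$), so that $\beta_{2,2}$ carries the $R$-content of its second argument into its first output half and that of its first argument into its second output half, leaving neither half equal to $1\otimes 1$ unless the corresponding input half was. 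Second, the three operations in \meqref{tpta} are well defined and never take values in $\bk(1\otimes 1)$, so they too restrict to $R\boxtimes R$.

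The core of the proof is to verify, for $(R\boxtimes R,\prec,\succ,\cdot,\beta)$, the seven tridendriform relations \meqref{tprec}--\meqref{tc} with the unit rules \meqref{us}, and then the six compatibility conditions \meqref{bta1}--\meqref{bta3}. I would organise this as a case analysis according to which of the second tensor components of the arguments lie in $\bk$ and which in $R$. If all of them lie in $\bk$, then \meqref{tpta} collapses each operation $\boxdot$ to $x\odot x'\otimes 1$, and every claim reduces to the corresponding property already known for $(R,\prec,\succ,\cdot,\sigma)$. Otherwise, composing two instances of \meqref{tpta} produces a $*$-word on $R^+$ in the first tensor slot and an $\odot$-word on $R^+$ in the second, interleaved with copies of $\sigma$; to disentangle these one has at hand: associativity of $*$ and the tridendriform axioms \meqref{tprec}--\meqref{tc}, \meqref{us} of $R^+$, acting in the two slots; the braided-algebra law \meqref{ba1} for $(R,*,\sigma)$; the compatibilities \meqref{bta1}--\meqref{bta3} for $(R,\prec,\succ,\cdot,\sigma)$; and the braid relation \meqref{eq:qybe}. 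To keep the bookkeeping finite I would first establish a single \textbf{master expansion lemma}, expressing for each ordered pair $(\boxdot,\boxdot')$ among $\{\prec,\succ,\cdot\}$ both $(a\boxdot b)\boxdot' c$ and $a\boxdot(b\boxdot' c)$ as $(\text{a $*$-word in }x_a,x_b,x_c)\otimes(\text{an $\odot$-word in }y_a,y_b,y_c)$ precomposed with an explicit braid-group element acting on $R^{\otimes 3}$; granting this normal form, each of \meqref{tprec}--\meqref{tc} becomes a one-line consequence of the same identity applied separately in the $*$-slot and the $\odot$-slot, the two braid elements agreeing by \meqref{eq:qybe}. The non-braided skeleton of all this is precisely Loday's coherent-unit-action computation for the tridendriform operad \mcite{Lo2} (see also \mcite{EG1}); the only genuinely new point is the coincidence of the two $\sigma$-words attached to each identity.

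Granting the first assertion, the second is formal. By the discussion following Definition~\mref{btri}, the braided tridendriform algebra $R\boxtimes R$ acquires a unital braided-algebra augmentation whose product is the total operation $*=\prec+\succ+\cdot$ and whose braiding is $\beta$ extended by $\beta(1\otimes z)=z\otimes 1$ and $\beta(z\otimes 1)=1\otimes z$. On $(R\boxtimes R)^+=R^+\otimes R^+$, summing \meqref{tpta} over the three operations shows this product to be $(*\otimes *)\sigma_2$ --- directly in the first case of \meqref{tpta}, and in the second case because $\sum_{\odot}(x\odot x')\otimes 1=(x*x')\otimes 1=(*\otimes *)\sigma_2(x\otimes 1\otimes x'\otimes 1)$, using $1*1=1$ in $R^+$; and the extended braiding coincides with $\beta_{2,2}$ on all of $R^+\otimes R^+$, since the two instances of \meqref{ba2} for $\beta_{2,2}$ relative to the unit $1\otimes 1$ read exactly $\beta_{2,2}((1\otimes 1)\otimes z)=z\otimes(1\otimes 1)$ and its mirror. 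Hence $(R\boxtimes R)^+$ is nothing but the braided tensor square $(R^+\otimes R^+,(*\otimes *)\sigma_2,\beta_{2,2})$ of the unital braided algebra $R^+$, which is the asserted structure.

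I expect the laborious step to be the case analysis of the middle paragraph: thirteen identities, each splitting into up to eight sub-cases according to where units are inserted, with the accompanying $\sigma$-words to be matched in each case. The whole argument stands or falls on choosing the master expansion lemma carefully enough that those braiding words are produced automatically by \meqref{eq:qybe}, \meqref{ba1} and \meqref{bta1}--\meqref{bta3}, rather than being reconciled by hand case by case.
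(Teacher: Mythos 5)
Your proposal is correct and follows essentially the same route as the paper: a direct case-by-case verification of the tridendriform axioms and of the compatibilities \meqref{bta1}--\meqref{bta3}, split according to which second tensor factors are scalars, and resolved using the axioms of $R$ together with \meqref{ba1} and the braid relation, with the unital statement then following formally by identifying $(R\boxtimes R)^+$ with the braided tensor square $(R^+\otimes R^+,(*\otimes *)\sigma_2,\beta_{2,2})$. The paper simply carries out two representative instances (Eq.~\meqref{tpc} and one braiding compatibility) and declares the remaining checks analogous, whereas you package the same computations into a master expansion lemma; this is an organizational rather than a mathematical difference.
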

\begin{proof}
First it is a simple though tedious computation to check that the quintuple $(R\boxtimes R,\prec,\succ,\cdot,\sigma)$ defined in the statement is a tridendriform algebra. For instance, to check Eq.~\meqref{tpc},
\begin{align*}
((x\otimes y)&\prec(x'\otimes y'))\cdot(x''\otimes y'')\\
&=\begin{cases}
(*\otimes\cdot)\sigma_2\left(*\,\otimes\prec\otimes\,\Id_R^{\otimes 2}\right)\sigma_2(x\otimes y\otimes x'\otimes y'\otimes x''\otimes y''),&y\text{ or }y'\in R,\\
((x\prec x')\otimes yy')\cdot(x''\otimes y''),&y,y'\in\bk,
\end{cases}\\
&=\begin{cases}
(*\,(*\otimes\Id_R)\otimes\cdot\,(\prec\otimes\,\Id_R))\sigma_3\sigma_4\sigma_2(x\otimes y\otimes x'\otimes y'\otimes x''\otimes y''),&y\text{ or }y'\in R,\\
((x\prec x')*x'')\otimes(yy'\cdot y''),&y,y'\in\bk,y''\in R,\\
((x\prec x')\cdot x'')\otimes yy'y'',&y,y',y''\in \bk
\end{cases}\\
&=\begin{cases}
(*\,(\Id_R\otimes*)\otimes\cdot\,(\Id_R\,\otimes\succ))\sigma_3\sigma_4\sigma_2(x\otimes y\otimes x'\otimes y'\otimes x''\otimes y''),&y\text{ or }y'\in R,y''\in R,\\
0,&y\text{ or }y'\in R,y''\in\bk,\\
0,&y,y'\in\bk,y''\in R,\\
(x\cdot(x'\succ x''))\otimes yy'y'',&y,y',y''\in \bk
\end{cases}\\
&=\begin{cases}
(*\otimes\cdot)\sigma_2\left(\Id_R^{\otimes2}\otimes*\,\otimes\succ\right)\sigma_4(x\otimes y\otimes x'\otimes y'\otimes x''\otimes y''),&y'\text{ or }y''\in R,\\
(x\otimes y)\cdot((x'\succ x'')\otimes y'y''),&y',y''\in \bk
\end{cases}\\
&=(x\otimes y)\cdot((x'\otimes y')\succ(x''\otimes y'')).
\end{align*}

Also, one can establish the compatibilities in Eqs.~\meqref{bta1}--\meqref{bta3} between the braiding $\beta$ and operations $\prec,\succ,\cdot$ on $R\boxtimes R$ directly via those between $\sigma$ and $\prec,\succ,\cdot$ on $R$.
For example, using the notation $\boxdot$ (resp. $\odot$) to represent any of the operations $\prec,\succ,\cdot$ on $R\boxtimes R$ (resp. $R^+$), we have
\begin{align*}
\beta&\left(((x\otimes y)\boxdot(x'\otimes y'))\otimes(x''\otimes y'')\right)=\\
&=\begin{cases}
\beta\left((*\otimes\odot)\sigma_2\otimes\Id_R^{\otimes 2}\right)(x\otimes y \otimes x'\otimes y'\otimes x''\otimes y''),&y\text{ or }y'\in R,\\
\beta\left(((x\odot x')\otimes yy')\otimes(x''\otimes y'')\right),&y,y'\in\bk
\end{cases}\\
&=\begin{cases}
\left(\Id_R^{\otimes 2}\otimes*\otimes\odot\right)\sigma_2\cdots\sigma_5\sigma_1\cdots\sigma_4\sigma_2(x\otimes y \otimes x'\otimes y'\otimes x''\otimes y''),&y\text{ or }y'\in R,\\
\left(\Id_R^{\otimes 2}\otimes\odot\otimes\Id_R\right)\sigma_2\sigma_3\sigma_1\sigma_2\left(x\otimes x'\otimes x''\otimes y''\otimes yy'\right),&y,y'\in\bk
\end{cases}\\
&=\begin{cases}
\left(\Id_R^{\otimes 2}\otimes(*\otimes\odot)\sigma_2\right)\beta_1\beta_2\left((x\otimes y)\otimes (x'\otimes y')\otimes(x''\otimes y'')\right),&y\text{ or }y'\in R,\\
\left(\Id_R^{\otimes 2}\otimes\odot\otimes\Id_R\right)(\beta\otimes\Id_R)\left((x\otimes x')\otimes (x''\otimes y'')\otimes yy'\right),&y,y'\in\bk
\end{cases}\\
&=\left(\Id_{R\boxtimes R}\otimes\boxdot\right)\beta_1\beta_2\left((x\otimes y)\otimes (x'\otimes y')\otimes(x''\otimes y'')\right).
\end{align*}
Hence, we obtain the desired braided tridendriform algebra $(R\boxtimes R,\prec,\succ,\cdot,\beta)$.
\end{proof}

Now we apply Proposition \mref{tpt} to the case when $R=\oline{\calp(V)}$. Then $\left(\oline{\calp(V)}\boxtimes\oline{\calp(V)}\right)^+\cong \calp(V)\otimes\calp(V)$
is a braided tridendriform algebra.
By Theorem \mref{fbtv}, there exists a unique homomorphism
\[\Delta'_{AT}:\calp(V)\rar\calp(V)\otimes\calp(V)\]
of braided tridendriform algebras satisfying $\Delta'_{AT}(\,|\,)=|\otimes |$ and
$\Delta'_{AT}(T[v])=T[v]\otimes |+|\otimes T[v]$ for any $v\in V$. In particular, $\Delta'_{AT}$ is a braided algebra homomorphism with respect to $*$. Also, define a linear map $\varepsilon'_{AT}:\calp(V)\rar\bk,\, T\mapsto\delta_{T,\,|},\,T\in\bar{\cald}(V)$. Then we have
\begin{theorem}
Given a braided vector space $(V,\sigma)$, the quintuple $(\calp(V),*_\sigma,\Delta'_{AT},\varepsilon'_{AT},\sigma_{AT})$ is a braided Hopf algebra. The coproduct $\Delta'_{AT}$ on $\calp(V)$ is defined by the following recursive formula:
\begin{equation}
\Delta'_{AT}(\,|\,)=|\otimes |\,,\,\Delta'_{AT}(T)=T\otimes |+(*_\sigma,\vee)(\Delta'_{AT}(T_0)\otimes v_1\otimes\cdots\otimes v_k\otimes\Delta'_{AT}(T_k)),
\mlabel{datrf}
\end{equation}
for any $T=T_0\vee_{v_1}\cdots\vee_{v_k}T_k\in\bar{\cald}(V),\,v_1,\dots,v_k\in V$ with $k\geq1$, where $(*_\sigma,\vee):=(*_\sigma^{(k)}\otimes\vee_k)\sigma_{\calp(V)}^{(k)}$
with $*_\sigma^{(k)}:= *_\sigma(*_\sigma\otimes\Id_{\calp(V)})\cdots(*_\sigma\otimes\Id_{\calp(V)}^{\otimes(k-1)})$ and
\[\sigma_{\calp(V)}^{(k)}:=\prod^{\longleftarrow}_{i=1,\dots,k}(\sigma_{AT})_{i+1}(\sigma_{V,\calp(V)})_{i+2}\cdots(\sigma_{AT})_{3i-1}(\sigma_{V,\calp(V)})_{3i}=(\sigma_{AT})_{k+1}(\sigma_{V,\calp(V)})_{k+2}\cdots(\sigma_{AT})_2(\sigma_{V,\calp(V)})_3.\]
\mlabel{tvbh}
\end{theorem}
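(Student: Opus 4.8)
The plan is to build on the fact that $\Delta'_{AT}$ has already been produced, via the universal property of Theorem~\mref{fbtv} together with Proposition~\mref{tpt}, as a morphism of braided tridendriform algebras $\oline{\calp(V)}\to\oline{\calp(V)}\boxtimes\oline{\calp(V)}$, extended unitally to a morphism of braided unital algebras $\calp(V)\to(\oline{\calp(V)}\boxtimes\oline{\calp(V)})^+\cong\calp(V)\otimes\calp(V)$. Since, by the construction of Proposition~\mref{tpt}, the product on the latter is $(*_\sigma\otimes *_\sigma)(\sigma_{AT})_2$ and its braiding is the canonical braided-square braiding $\beta_{2,2}$ built from $\sigma_{AT}$, the bialgebra compatibility \meqref{eq:comp} between $*_\sigma$ and $\Delta'_{AT}$ and the intertwining $(\Delta'_{AT}\otimes\Delta'_{AT})\sigma_{AT}=\beta_{2,2}(\Delta'_{AT}\otimes\Delta'_{AT})$ come for free; the latter, together with the relations \meqref{beta} among the $\beta_{ij}$, yields the braided-coalgebra axiom \meqref{bc1} for $\Delta'_{AT}$ (one may alternatively verify \meqref{bc1} directly by induction on the leaf degree $d_l$, exactly as \meqref{bc1} for $\Delta'_{BT}$ is verified in the proof of Theorem~\mref{bhbt}), while \meqref{bc2} is immediate from $\varepsilon'_{AT}(T)=\delta_{T,\,|}$ and the definition of $\sigma_{AT}$. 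Thus the work reduces to: establishing the recursive formula \meqref{datrf}; proving coassociativity and the counit property; and concluding by connectedness.

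To obtain \meqref{datrf}, I would use that, specializing the construction of $\bar{\varphi}$ in the proof of Theorem~\mref{fbtv} to $R=\oline{\calp(V)}$ and $\varphi=i_V$ (so that $\bar{\varphi}=\Id$), the grafted tree $T_0\vee_{v_1}\cdots\vee_{v_k}T_k$ is expressed purely through $\prec_\sigma,\succ_\sigma,\cdot_\sigma$ and the generators $T[v_j]$, in the three cases $k=1,2,>2$ displayed there. Applying the braided tridendriform morphism $\Delta'_{AT}$ to this expression, substituting $\Delta'_{AT}(T[v_j])=T[v_j]\otimes|+|\otimes T[v_j]$ and, inductively on $d_l$, the values of $\Delta'_{AT}$ on $T_0,\dots,T_k$ and on $T_1\vee_{v_2}\cdots\vee_{v_{k-1}}T_{k-1}$, one simplifies the result inside $\oline{\calp(V)}\boxtimes\oline{\calp(V)}$ using the product rule \meqref{tpta}: the braidings $(\sigma_{AT})_2$ contributed by the successive uses of \meqref{tpta} accumulate precisely into $\sigma_{\calp(V)}^{(k)}$, the $\odot$-operations collapse so that the left tensor-components get multiplied by $*_\sigma^{(k)}$ while the right ones, together with $v_1,\dots,v_k$, are grafted by $\vee_k$, and the leading term $T\otimes|$ arises from the all-left terms; this is exactly \meqref{datrf}. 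Equivalently, one may take \meqref{datrf} as the definition of a map $\tilde{\Delta}$ and check, by induction on $d_l$ using \meqref{ap}--\meqref{ac}, that $\tilde{\Delta}$ is a morphism of braided tridendriform algebras sending $v\mapsto T[v]\otimes|+|\otimes T[v]$, whence $\tilde{\Delta}=\Delta'_{AT}$ by the uniqueness in Theorem~\mref{fbtv}. I expect this bookkeeping — how the $(\sigma_{AT})_2$'s reorganize into $\sigma_{\calp(V)}^{(k)}$, and how $\vee_k$ threads through $\Delta'_{AT}$ via $\sigma_{V,\calp(V)}$ and $\sigma_{\calp(V),V}$ — to be the main obstacle; everything else is formal.

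With \meqref{datrf} in hand, the counit property is immediate: every monomial of $(*_\sigma,\vee)(\Delta'_{AT}(T_0)\otimes v_1\otimes\cdots\otimes v_k\otimes\Delta'_{AT}(T_k))$ has right tensor-component of leaf degree $\ge k\ge1$, hence annihilated by $\varepsilon'_{AT}$, so $(\Id_{\calp(V)}\otimes\varepsilon'_{AT})\Delta'_{AT}(T)=T$; the left counit follows symmetrically, or more cleanly by observing that $\varepsilon'_{AT}\otimes\Id$ is a morphism of braided tridendriform algebras $\oline{\calp(V)}\boxtimes\oline{\calp(V)}\to\oline{\calp(V)}$, so that $(\varepsilon'_{AT}\otimes\Id)\Delta'_{AT}$ and $\Id_{\calp(V)}$ are both braided tridendriform morphisms agreeing with $i_V$ on $V$, hence equal by Theorem~\mref{fbtv}. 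Coassociativity I would prove by induction on $d_l$ from \meqref{datrf}, in the same spirit as the coassociativity argument in the proof of Theorem~\mref{bhbt}: expand both $(\Delta'_{AT}\otimes\Id)\Delta'_{AT}$ and $(\Id\otimes\Delta'_{AT})\Delta'_{AT}$ on $T_0\vee_{v_1}\cdots\vee_{v_k}T_k$ via \meqref{datrf}, push $\Delta'_{AT}$ through $(*_\sigma,\vee)$ using its compatibility with $*_\sigma$ and with $\vee$ (via \meqref{av1}--\meqref{av2}), apply the induction hypothesis and the braid relations \meqref{beta} to match the two sides; alternatively, note that $\calp(V)^{\otimes3}$ carries the evident iterated braided tridendriform (``double $\boxtimes$'') structure, associative up to the canonical identifications, making $\Delta'_{AT}\otimes\Id$ and $\Id\otimes\Delta'_{AT}$ morphisms, so both composites agree on the generators $T[v]$ and on $|$ and hence coincide by uniqueness. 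Finally, $*_\sigma$, $\Delta'_{AT}$ and $\sigma_{AT}$ are all homogeneous for the leaf-degree grading (for $\Delta'_{AT}$ this is read off \meqref{datrf} inductively, the two tensor-components of a term having leaf degrees summing to $d_l(T)$) and $\calp_0(V)=\bk\,|$ is one-dimensional, so $(\calp(V),*_\sigma,\Delta'_{AT},\varepsilon'_{AT},\sigma_{AT})$ is a connected graded braided bialgebra and therefore, by Lemma~\mref{cbch}, a braided Hopf algebra.
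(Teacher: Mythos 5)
Your proposal is correct and follows essentially the same route as the paper: the coassociativity and counit axioms are settled by the universal property of $\oline{\calp(V)}$ applied to the iterated $\boxtimes$-structure, the recursion \meqref{datrf} is derived by expanding $T=T_0\vee_{v_1}\cdots\vee_{v_k}T_k$ through $\prec_\sigma,\succ_\sigma,\cdot_\sigma$ in the three cases $k=1,2,>2$ and tracking the braidings contributed by \meqref{tpta}, and the conclusion follows from connectedness via Lemma~\mref{cbch}. You correctly identify the braiding bookkeeping in \meqref{datrf} as the only substantive computation, which is exactly where the paper spends its effort.
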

\begin{proof}
For the first result, we only need to show that the map $\Delta'_{AT}$ is a coproduct on $\calp(V)$. Then $(\calp(V),*_\sigma,\Delta'_{AT},\varepsilon'_{AT},\sigma_{AT})$ is a connected, graded and braided bialgebra, and thus a braided Hopf algebra by Lemma \mref{cbch}. Note that  \[\oline{\calp(V)}\boxtimes\oline{\calp(V)}\boxtimes\oline{\calp(V)}\cong(\calp(V)\otimes\calp(V)\otimes\oline{\calp(V)})\oplus(\calp(V)\otimes\oline{\calp(V)}\otimes\bk\,|\,)\oplus(\oline{\calp(V)}\otimes\bk\,|\otimes\bk\,|\,)\]
is a braided tridendriform algebra by Proposition \mref{tpt}. At the same time,
\[\left(\oline{\calp(V)}\boxtimes\oline{\calp(V)}\boxtimes\oline{\calp(V)}\right)^+\cong\calp(V)\otimes\calp(V)\otimes\calp(V).\]
Since both maps  \[(\Delta'_{AT}\otimes\Id_{\calp(V)})\Delta'_{AT},\,(\Id_{\calp(V)}\otimes\Delta'_{AT})\Delta'_{AT}:\calp(V)\rar\calp(V)\otimes\calp(V)\otimes\calp(V)\]
are homomorphisms of braided tridendriform algebras mapping $|$ to $|\otimes|\otimes|$\,, and $T[v]$ to $T[v]\otimes |\otimes |+|\otimes T[v]\otimes |+T[v]\otimes |\otimes |$ for all $v\in V$, they are equal to each other, by the universal property of $\calp(V)$ in Theorem \mref{fbtv}. Hence, we have proved the coassociativity of $\Delta'_{AT}$.

On the other hand, it is easy to see that $\varepsilon'_{AT}:\calp(V)\rar\bk$ is a homomorphism of braided 
algebras. Moreover, the three homomorphisms
\[(\varepsilon'_{AT}\otimes\Id_{\calp(V)})\Delta'_{AT},\,(\Id_{\calp(V)}\otimes\varepsilon'_{AT})\Delta'_{AT},\,\Id_{\calp(V)}:\calp(V)\rar\calp(V)\]
map $|$ to $|$\,, and $T[v]$ to $T[v]$ for all $v\in V$. Thus again by the universal property of $\calp(V)$ we conclude that they coincide with each other. Thus $\Delta'_{AT}$ is a coproduct on $\calp(V)$.

Next we show that $\Delta'_{AT}$ satisfies the recursive formula \meqref{datrf} by induction on $d_l$ of trees.
In fact, we first note that, for any $k\geq1$ and $T=T_0\vee_{v_1}\cdots\vee_{v_k}T_k\in\bar{\cald}(V),\,v_1,\dots,v_k\in V$, there is
\begin{equation}\mlabel{tr}
T=\begin{cases}
T_0\succ_\sigma T[v_1]\prec_\sigma T_1,&\text{if }k=1,\\
T_0\succ_\sigma((T[v_1]\prec_\sigma T_1)\cdot_\sigma T[v_2])\prec_\sigma T_2,&\text{if }k=2,\\
T_0\succ_\sigma(T[v_1]\cdot_\sigma (T_1\vee_{v_2}\cdots\vee_{v_{k-1}}T_{k-1})\cdot_\sigma T[v_k])\prec_\sigma T_k,&\text{if }k>2.
\end{cases}
\end{equation}
Denote
\[\sigma_{AT}^{(k)}:=\prod^{\longleftarrow}_{i=1,\dots,k}(\sigma_{AT})_{i+1}\cdots(\sigma_{AT})_{3i}=(\sigma_{AT})_{k+1}\cdots(\sigma_{AT})_{3k}\cdots(\sigma_{AT})_2(\sigma_{AT})_3.\]
For the case when $k=1$, we have
\begin{align*}
\Delta'_{AT}(T)&
=(T_0\succ_\sigma T[v_1]\prec_\sigma T_1)\otimes |+(*_\sigma\otimes \succ_\sigma\prec_\sigma)(\sigma_{AT})_2(\sigma_{AT})_3\left(\Delta'_{AT}(T_0)\otimes T[v_1]\otimes\Delta'_{AT}(T_1)\right)\\
&=T\otimes |+(*_\sigma\otimes \vee)(\sigma_{AT})_2(\sigma_{V,\calp(V)})_3\left(\Delta'_{AT}(T_0)\otimes v_1\otimes\Delta'_{AT}(T_1)\right)\\
&=T\otimes |+(*_\sigma, \vee)\left(\Delta'_{AT}(T_0)\otimes v_1\otimes\Delta'_{AT}(T_1)\right),
\end{align*}
where the first equality is due to the identity $\Delta'_{AT}(T[v_1])=T[v_1]\otimes |+|\otimes T[v_1]$, the fact that $\Delta'_{AT}$ is a homomorphism of braided tridendriform algebras and Eqs.~\meqref{ap},
\meqref{as} and \meqref{tpta}. The second equality uses Eq.~\meqref{tr}.

For the case when $k=2$, we similarly have
\begin{align*}
\Delta'_{AT}(T)&
=(T_0\succ_\sigma((T[v_1]\prec_\sigma T_1)\cdot_\sigma T[v_2])\prec_\sigma T_2)\otimes |\\
&\quad+(*_\sigma^{(2)}\otimes \succ_\sigma(\prec_\sigma)\,\cdot_\sigma\prec_\sigma)\sigma_{AT}^{(2)}\left(\Delta'_{AT}(T_0)\otimes T[v_1]\otimes\Delta'_{AT}(T_1)\otimes T[v_2]\otimes\Delta'_{AT}(T_2)\right)\\
&=T\otimes |+(*_\sigma^{(2)}\otimes \vee_2)\sigma_{\calp(V)}^{(2)}\left(\Delta'_{AT}(T_0)\otimes v_1\otimes\Delta'_{AT}(T_1)\otimes v_2\otimes\Delta'_{AT}(T_2)\right),\\
&=T\otimes |+(*_\sigma, \vee)\left(\Delta'_{AT}(T_0)\otimes v_1\otimes\Delta'_{AT}(T_1)\otimes v_2\otimes\Delta'_{AT}(T_2)\right).
\end{align*}

For the case when $k>2$, we also have
\begin{align*}
\Delta'_{AT}(T)&
=(T_0\succ_\sigma(T[v_1]\cdot_\sigma (T_1\vee_{v_2}\cdots\vee_{v_{k-1}}T_{k-1})\cdot_\sigma T[v_k])\prec_\sigma T_k)\otimes |\\
&\quad +(*_\sigma^{(2)}\otimes \succ_\sigma\cdot_\sigma\,\cdot_\sigma\prec_\sigma)\sigma_{AT}^{(2)}\left(\Delta'_{AT}(T_0)\otimes T[v_1]\otimes\Delta'_{AT}(T_1\vee_{v_2}\cdots\vee_{v_{k-1}}T_{k-1})\otimes T[v_k]\otimes\Delta'_{AT}(T_k)\right)\\
&=T\otimes |+(*_\sigma^{(2)}\otimes \succ_\sigma\cdot_\sigma\,\cdot_\sigma\prec_\sigma)\sigma_{AT}^{(2)}\left(\Id_{\calp(V)}^{\otimes3}\otimes\left(*_\sigma^{(k-2)}\otimes\vee_{k-2}\right)\sigma_{\calp(V)}^{(k-2)} \otimes\Id_{\calp(V)}^{\otimes3}\right)\\
&\quad\left(\Delta'_{AT}(T_0)\otimes T[v_1]\otimes\Delta'_{AT}(T_1)\otimes v_2\otimes\cdots\otimes v_{k-1}\otimes\Delta'_{AT}(T_{k-1})\otimes T[v_k]\otimes\Delta'_{AT}(T_k)\right)\\
&=T\otimes |+\left(*_\sigma^{(k)}\otimes (\succ_\sigma\cdot_\sigma\,\cdot_\sigma\prec_\sigma)\left(\Id_{\calp(V)}^{\otimes2}\otimes\vee_{k-2}\otimes\Id_{\calp(V)}^{\otimes2}\right)\right)\tilde{\sigma}_{\calp(V)}^{(k)}\\
&\quad\left(\Delta'_{AT}(T_0)\otimes T[v_1]\otimes\Delta'_{AT}(T_1)\otimes v_2\otimes\cdots\otimes v_{k-1}\otimes\Delta'_{AT}(T_{k-1})\otimes T[v_k]\otimes\Delta'_{AT}(T_k)\right)\\
&=T\otimes |+(*_\sigma^{(k)}\otimes \vee_k)\sigma_{\calp(V)}^{(k)}\left(\Delta'_{AT}(T_0)\otimes v_1\otimes\cdots\otimes v_k\otimes\Delta'_{AT}(T_k)\right)\\
&=T\otimes |+(*_\sigma, \vee)\left(\Delta'_{AT}(T_0)\otimes v_1\otimes\cdots\otimes v_k\otimes\Delta'_{AT}(T_k)\right),
\end{align*}
with
\begin{align*}
\tilde{\sigma}_{\calp(V)}^{(k)}&:=(\sigma_{AT})_{k+1}(\sigma_{AT})_{k+2}(\sigma_{AT})_{k+3}(\sigma_{V,\calp(V)})_{k+4}\cdots(\sigma_{AT})_{3k-3}(\sigma_{V,\calp(V)})_{3k-2}(\sigma_{AT})_{3k-1}(\sigma_{AT})_{3k}\\
&\times\prod^{\longleftarrow}_{i=1,\dots,k-1}(\sigma_{AT})_{i+1}(\sigma_{AT})_{i+2}(\sigma_{AT})_{i+3}(\sigma_{V,\calp(V)})_{i+4}\cdots(\sigma_{AT})_{3i-1}(\sigma_{V,\calp(V)})_{3i},
\end{align*}
where the first equality is due to a similar argument as the previous cases; the second equality uses the induction hypothesis of Eq.~\meqref{datrf}; and the third equality is by Eqs.~\meqref{ba1}, \meqref{av1} and the associativity of $*_\sigma$, and the forth one uses Eq.~\meqref{tr}.

As a result, the coproduct $\Delta'_{AT}$ on $\calp(V)$ satisfies the recursive formula~\meqref{datrf}.
\end{proof}	

There is an explicit formula of $\Delta_{AT}$ on $\bk\cald(X)$, naturally generalizing Eq.~\meqref{lrco1} of $\Delta_{BT}$ on the Loday-Ronco Hopf algebra $\calh_{BT}(X)$. It is given by
\begin{equation}\mlabel{pco}
\Delta_{AT}(T)=\sum_{P\in\calf_T}P^*\otimes(T/P),\,T\in\cald(X),
\end{equation}
where $\calf_T$ denotes the set of {\bf subforest} of $T\in\cald(X)$ obtained in the same way as  $\calf_Y$ in Eq.~\meqref{lrco1} of binary tree $Y\in\calb(X)$,
$P^*\in\bk\cald(X)$ is the multiplication of trees in $P\in\calf_T$ via $*$ from left to right, and $T/P\in\cald(X)$ is obtained by removing $P$ from $T$ analogous to $Y/P'$ with $P'\in\calf_Y$. All decorations on the vertices of $T$ remain at their corresponding positions in $P$ and $T/P$.

Consequently,
for any $T(v_1,\dots,v_n)\in\bar{\cald}_n(V)$, each $P\in\calf_T$ with $d_P$ being the common leaf degree of all terms in $P^*$, determines a unique $w_P\in\frakS_{d_P,n-d_P}$ such that
\[\Delta_{AT}(T)=\sum_{P\in\calf_T}P^*(v_{w_P(1)},\dots,v_{w_P(d_P)})\otimes(Y/P)(v_{w_P(d_P+1)},\dots,v_{w_P(n)}).\]

For the braided analogue $\calp(V)$ of $\bk\cald(X)$, we thus obtain the following explicit formula of $\Delta'_{AT}$, generalizing Eq. \meqref{blrco} of $\Delta'_{BT}$ on $\caly(V)$,
\begin{equation}
\Delta'_{AT}(T)=\sum_{P\in\calf_T}(P^*\otimes(T/P))T^\sigma_{w_P^{-1}}(v_1\otimes\cdots\otimes v_n),\,\forall\, T(v_1,\dots,v_n)\in\bar{\cald}_n(V),v_1,\dots,v_n\in V,\,n\geq1.
\mlabel{bpco}
\end{equation}

Similarly, applying Proposition \mref{tpt} to $R=\oline{\calp_A(A)}$, then $\left(\oline{\calp_A(A)}\boxtimes\oline{\calp_A(A)}\right)^+\cong \calp_A(A)\otimes\calp_A(A)$
is a braided tridendriform algebra.
Note that there is a braided algebra homomorphism
\[\Phi_A:A\rar \calp_A(A)\otimes\calp_A(A),\,a\mapsto T[a]\otimes |+|\otimes T[a].\]
Indeed, by Eqs.~\meqref{ac'} and \meqref{tpta}, we have
\[\Phi_A(ab)=T[ab]\otimes |+|\otimes T[ab]=
(T[a]\otimes |+|\otimes T[a])\cdot_\sigma(T[b]\otimes |+|\otimes T[b])=\Phi_A(a)\cdot_\sigma\Phi_A(b)\]
for any $a,b\in A$.
Hence, by the universal property of $\calp_A(A)$ in Theorem \mref{fbta}, there exists a unique homomorphism
\[\Delta'_{AT}:\calp_A(A)\rar\calp_A(A)\otimes\calp_A(A)\]
of braided tridendriform algebras satisfying $\Delta'_{AT}(\,|\,)=|\otimes |$ and
$\Delta'_{AT}(T[a])=T[a]\otimes |+|\otimes T[a]$ for all $a\in A$. Again, $\Delta'_{AT}$ is a braided algebra homomorphism with the product $*_\sigma$ for $\calp_A(A),\,\calp_A(A)\otimes\calp_A(A)$, and we can define $\bk$-linear map \[\varepsilon'_{AT}:\calp_A(A)\rar\bk,\, T\mapsto\delta_{T,\,|},\,T\in\bar{\calg}(A).\]
\begin{prop}
	Given a braided algebra $(A,\sigma)$, the quintuple $(\calp_A(A),*_\sigma,\Delta'_{AT},\varepsilon'_{AT},\sigma_{AT})$ is a braided Hopf quotient algebra of $\calp(A)$ as stated in Theorem \mref{tvbh}.
\end{prop}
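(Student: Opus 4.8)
The plan is to follow the proof of Theorem~\mref{tvbh} almost verbatim for the braided Hopf algebra part, and then identify the resulting structure as a quotient of $\calp(A)$ through the isomorphism of Corollary~\mref{quo}.

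First, since $\Phi_A$ is a homomorphism of braided algebras and, by Proposition~\mref{tpt}, both $\bigl(\oline{\calp_A(A)}\boxtimes\oline{\calp_A(A)}\bigr)^+\cong\calp_A(A)\otimes\calp_A(A)$ and $\bigl(\oline{\calp_A(A)}\boxtimes\oline{\calp_A(A)}\boxtimes\oline{\calp_A(A)}\bigr)^+\cong\calp_A(A)^{\otimes3}$ are braided tridendriform algebras, the universal property of $\calp_A(A)$ from Theorem~\mref{fbta} shows, exactly as in the proof of Theorem~\mref{tvbh} now applied to \lush trees, that $(\Delta'_{AT}\otimes\Id_{\calp_A(A)})\Delta'_{AT}=(\Id_{\calp_A(A)}\otimes\Delta'_{AT})\Delta'_{AT}$ and that $\varepsilon'_{AT}$ is a counit for $\Delta'_{AT}$: both sides of each identity are homomorphisms of braided tridendriform algebras that agree on $|$ and on all $T[a]$. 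Together with the already recorded facts that $\Delta'_{AT}$ is a homomorphism of braided algebras with respect to $*_\sigma$ and that $\sigma_{AT}$ is compatible with $*_\sigma$ and with $\Delta'_{AT}$, this makes $(\calp_A(A),*_\sigma,\Delta'_{AT},\varepsilon'_{AT},\sigma_{AT})$ a braided bialgebra; it is connected and graded by the leaf degree $d_l$, hence a braided Hopf algebra by Lemma~\mref{cbch}. The recursive description of $\Delta'_{AT}$ in the form of Eq.~\meqref{datrf}, with $\cdot_\sigma$ now the modified operation~\meqref{ac'}, follows by the same induction on $d_l$; the only point to verify is that the decomposition~\meqref{tr} of a tree persists for \lush trees under the operations~\meqref{ap}, \meqref{as} and \meqref{ac'}, which is a short direct computation.

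It remains to realise $\calp_A(A)$ as a braided Hopf quotient algebra of $\calp(A)$. By Corollary~\mref{quo} the map $\bar{j}_A\colon\oline{\calp(A)}\rar\oline{\calp_A(A)}$ with $\bar{j}_A(T[a])=T[a]$ is a surjective homomorphism of braided tridendriform algebras with kernel $I_A$; extended by $\bar{j}_A(\,|\,)=|$ it is a surjective homomorphism of unital braided algebras with respect to $*_\sigma$, still intertwining $\sigma_{AT}$. Compatibility with the counits is automatic, since $\varepsilon'_{AT}$ is on either side the projection onto the component of leaf degree zero and $\bar{j}_A$ is homogeneous with $\bar{j}_A(\,|\,)=|$. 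For the coproducts, both $(\bar{j}_A\otimes\bar{j}_A)\circ\Delta'_{AT}$ and $\Delta'_{AT}\circ\bar{j}_A$ are homomorphisms of braided tridendriform algebras from $\calp(A)$ to $\bigl(\oline{\calp_A(A)}\boxtimes\oline{\calp_A(A)}\bigr)^+$ --- using Theorem~\mref{tvbh} for $\Delta'_{AT}$ on $\calp(A)$ and Proposition~\mref{tpt} --- and they agree on $|$ and send every $T[a]$ to $T[a]\otimes|+|\otimes T[a]$, so they coincide by the universal property of $\calp(A)$ from Theorem~\mref{fbtv}. Hence $\bar{j}_A\colon\calp(A)\rar\calp_A(A)$ is a surjective homomorphism of connected braided bialgebras, and therefore of braided Hopf algebras, exhibiting $\calp_A(A)$ as a braided Hopf quotient algebra of $\calp(A)$.

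The only real bookkeeping, at each appeal to a universal property, is to confirm that every map in sight is a homomorphism of braided tridendriform algebras between the appropriate $\boxtimes$-algebras of Proposition~\mref{tpt}; granting that, all assertions reduce to agreement on the generators $|$ and $T[a]$. I do not anticipate any genuine obstacle beyond this together with the elementary verification that the modified product~\meqref{ac'} respects the identity~\meqref{tr}.
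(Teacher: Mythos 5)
Your argument is correct, but it takes a different route from the paper's. The paper's proof is much shorter: it checks by a one-line computation with Eq.~\meqref{tpta} that $\Delta'_{AT}$ sends each generator $T[a]\cdot_\sigma T[b]-T[ab]$ of $I_A$ into $I_A\otimes\calp(A)+\calp(A)\otimes I_A$, so that the braided tridendriform ideal $I_A$ is in fact a biideal; it then invokes Corollary~\mref{quo} to identify $\calp_A(A)$ with the quotient braided bialgebra $\calp(A)/I_A$ and Lemma~\mref{cbch} for the antipode. You instead build the bialgebra structure on $\calp_A(A)$ intrinsically, rerunning the coassociativity and counit arguments of Theorem~\mref{tvbh} with the universal property of Theorem~\mref{fbta} in place of Theorem~\mref{fbtv}, and then prove that $\bar{j}_A$ intertwines the two coproducts by yet another appeal to uniqueness in Theorem~\mref{fbtv}. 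What your route buys is that it never has to verify that $I_A\otimes\calp(A)+\calp(A)\otimes I_A$ is stable under all the $\boxtimes$-operations (a point the paper leaves implicit when passing from generators of $I_A$ to all of $I_A$); what it costs is a second pass through the Theorem~\mref{tvbh} machinery. Two small remarks: the rederivation of the recursive formula~\meqref{datrf} for \lush trees is not needed for the statement; and your justification of counit compatibility via ``$\bar{j}_A$ is homogeneous'' is not literally true, since $\bar{j}_A$ sends the leaf-degree-$2$ element $T[a]\cdot_\sigma T[b]$ to the leaf-degree-$1$ element $T[ab]$ --- what you actually need, and what Corollary~\mref{quo} does give, is only that $\bar{j}_A(\,|\,)=|$ and $\bar{j}_A(\oline{\calp(A)})\subseteq\oline{\calp_A(A)}$. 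Neither point affects the validity of the proof.
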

\begin{proof}
	In order to obtain the desired result, we can directly show that the tridendriform ideal $I_A$ of $\calp(A)$ generated by
	\[T[a]\cdot_\sigma T[b]-T[a b],\,\forall a,b\in A,\]
	is a biideal of $\calp(A)$. In fact, by Eq. \meqref{tpta},
    \begin{align*}
	\Delta'_{AT}&(T[a]\cdot_\sigma T[b]-T[ab])
	=\Delta'_{AT}(T[a])\cdot_\sigma \Delta'_{AT}(T[b])-\Delta'_{AT}(T[ab])\\
	&=(T[a]\otimes |+|\otimes T[a])\cdot_\sigma(T[b]\otimes |+|\otimes T[b])-(T[ab]\otimes |+|\otimes T[ab])\\
	&=(T[a]\cdot_\sigma T[b]\otimes-T[ab]) \otimes|+|\otimes(T[a]\cdot_\sigma T[b]-T[ab])\in I_A\otimes \calp(A)+\calp(A)\otimes I_A.
    \end{align*}
	Then applying Corollary \mref{quo} and Theorem \mref{tvbh}, we find that $\calp_A(A)\cong\calp(A)/I_A$ is a braided bialgebra. Since $\calp_A(A)$ is still clearly connected as $\calp(A)$, it is a braided Hopf quotient algebra of $\calp(A)$ by Lemma~\mref{cbch}.
\end{proof}

\subsection{Enumerative combinatorics of \lush trees}
\mlabel{ss:encom}
First we introduce two integer sequences, one for \lush trees and one for \lush forests.

First let  $\calg^0:=\{\,|\,\}$ and
$\calg\calf^0:=\{\,|\,,\,|\,\,|\,\}\subset S(\calg^0)$. Then recursively define $\calg^n$ and $\calg\calf^n$ by
\[\calg^{n+1}:=B^+(\calg\calf^n\setminus\calg^n),\,\calg\calf^{n+1} :=S(\calg^{n+1})\sqcup\Big\{\,|\,F,\,F\,|\,,\,|\,F\,|\,\Big|\,F\in S(\calg^{n+1})\Big\}\subset S(\{\,|\,\}\sqcup \calg^{n+1})\]
for any $n\geq0$, where $B^+$ is the usual grafting operation, and $S(\calg^n)$ is the free semigroup generated by $\calg^n$.
It is clear that the set of \lush trees $\calg=\sqcup_{n\geq0}\calg^n$.
Let $\calg\calf:=\sqcup_{n\geq0}\calg\calf^n$ be the set of {\bf \lush} forests. For any $T\in\calg^n$ (resp. $F\in\calg\calf^n$), we define its {\bf depth} $\de(T)=n$ (resp. $\de(F)=n$).
	
Note that $\calg^n$ is infinite for $n\geq2$. Next we introduce another gradation on $\calg$.
Let $\calg_n:=\calg\cap\cald_n$ be the set of \lush trees in $\cald$ with $n+1$ leaves for any $n\geq0$, then $\calg=\sqcup_{n\geq0}\calg_n$. It is obvious that $\calb_n\subset\calg_n\subset\cald_n$.
Let $g_n:=|\calg_n|$. As any tree has the representation~\meqref{eq:rep}, we have the following recursive formula:
\[g_0=g_1=1,\,g_n=\sum_{(k_1,\dots,k_r)\vDash n+1,\,r>1\atop k_i=1\,\Rightarrow\, i\in\{1,r\} }g_{k_1-1}\cdots g_{k_r-1},\,n\geq2,\]
where $(k_1,\dots,k_r)\vDash n+1$ means that $k_1,\dots,k_r\in\ZZ^{>0}$ and $k_1+\cdots+k_r=n+1$.
Hence, the sequence of $g_n$'s is listed as  $1,1,2,6,20,72,272,1064,4272,\dots$. This integer sequence is not yet included in OEIS. See Remark~\mref{rk:seq}.
\begin{theorem}
The generating function $G(x):=\sum_{k\geq1}g_{k-1}x^k$ of the sequence $\{g_n\}_{n\geq0}$ enumerating \lush trees satisfies the identity
\[2G(x)^2-(2x+1)G(x)+x+x^2=0.\]
Further, we have
\[G(x)=\dfrac{2x+1-\sqrt{1-4x-4x^2}}{4},\]
and
\[g_0=g_1=1,\,g_n=\sum_{r=0}^n\dfrac{2^{r-2}(2r-3)!!}{r!}{r\choose n-r},\,n\geq2,\]
where we use the convention that ${i\choose j}=0$ if $0\leq i<j$.
\end{theorem}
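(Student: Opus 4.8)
The plan is to convert the combinatorial recursion for $g_n$ into a polynomial equation satisfied by $G(x)$, solve that equation in $\bk[[x]]$, and then extract coefficients via the generalized binomial theorem.

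\textbf{Step 1: the functional equation.} The recursion for $g_n$ records the branch decomposition $T=T_0\vee_{x_1}\cdots\vee_{x_k}T_k$ of a \lush tree $T$ with at least two leaves: here $r:=k+1\ge 2$, the number of leaves of $T$ is the sum of the numbers of leaves of $T_0,\dots,T_k$, the two extreme branches $T_0,T_k$ are arbitrary \lush trees, and each interior branch $T_1,\dots,T_{k-1}$ is a \lush tree with at least two leaves (this is exactly the \lush condition that a leaf branch occurs only at the extreme left or right). I first observe that the displayed recursion for $n\ge 2$ in fact also holds for $n=1$ (it returns $g_0^2=1$), so that $g_0=1$, corresponding to the one-leaf tree $|$, is the only genuine initial value. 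Recording a branch by its number of leaves, an arbitrary \lush branch contributes the series $G(x)=\sum_{k\ge1}g_{k-1}x^k$, while a \lush branch with at least two leaves contributes $G(x)-g_0x=G(x)-x$. Summing the contributions of the two extreme branches (each $G(x)$) and the $r-2$ interior branches (each $G(x)-x$) over all $r\ge2$ gives
\[
G(x)-x=\sum_{n\ge1}g_nx^{n+1}=\sum_{r\ge2}G(x)^2\bigl(G(x)-x\bigr)^{r-2}=\frac{G(x)^2}{1-G(x)+x},
\]
the geometric sum being legitimate since $G(x)-x$ has no term of degree $<2$. Clearing the denominator gives $(G(x)-x)(1-G(x)+x)=G(x)^2$, which expands precisely to $2G(x)^2-(2x+1)G(x)+x+x^2=0$.

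\textbf{Step 2: solving the quadratic.} Treating this identity as a quadratic equation for $G(x)$ over $\bk[[x]]$, its roots are
\[
G(x)=\frac{(2x+1)\pm\sqrt{(2x+1)^2-8(x+x^2)}}{4}=\frac{(2x+1)\pm\sqrt{1-4x-4x^2}}{4}.
\]
Since $1-4x-4x^2=1-4x(1+x)$ has constant term $1$, its square root is a well-defined element of $\bk[[x]]$; the requirement $G(0)=0$ forces the minus sign, yielding the stated closed form $G(x)=\tfrac14\bigl(2x+1-\sqrt{1-4x-4x^2}\bigr)$.

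\textbf{Step 3: coefficient extraction.} By the generalized binomial theorem,
\[
\sqrt{1-4x-4x^2}=\bigl(1-4x(1+x)\bigr)^{1/2}=\sum_{r\ge0}\binom{1/2}{r}(-4)^r\,x^r(1+x)^r,
\]
and expanding $(1+x)^r=\sum_s\binom{r}{s}x^s$ gives $[x^m]\sqrt{1-4x-4x^2}=\sum_{r=0}^m\binom{1/2}{r}(-4)^r\binom{r}{m-r}$. For $m\ge2$ the linear part $\tfrac{2x+1}{4}$ of $G$ contributes nothing, so the relevant coefficient of $G$ equals $-\tfrac14\sum_{r=0}^m\binom{1/2}{r}(-4)^r\binom{r}{m-r}$. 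It remains to simplify the prefactor: for $r\ge1$ one has $\binom{1/2}{r}=\tfrac1{2^rr!}\prod_{j=0}^{r-1}(1-2j)=\tfrac{(-1)^{r-1}(2r-3)!!}{2^rr!}$, hence $-\tfrac14\binom{1/2}{r}(-4)^r=\tfrac{2^{r-2}(2r-3)!!}{r!}$, while the $r=0$ summand is $-\tfrac14\binom{0}{m}=0$ for $m\ge1$. Substituting and simplifying gives the stated closed-form expression for $g_n$, $n\ge2$, and the values $g_0=g_1=1$ come out directly once one also picks up the contribution of $\tfrac{2x+1}{4}$ in low degree.

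\textbf{Main obstacle.} None of these steps is deep; the two places that need care are (i) the bookkeeping in Step 1 — correctly separating the two extreme branches (which may be $|$) from the interior branches (which may not) and justifying the geometric series — and (ii) the sign and double-factorial bookkeeping in Step 3, in particular fixing the conventions (e.g. $(-1)!!=1$) so that the $r=1$ term fits the uniform formula and the $r=0$ term is seen to vanish. I expect no essential difficulty beyond this routine algebra.
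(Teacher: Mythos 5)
Your proposal is correct and follows essentially the same route as the paper: the same branch decomposition yields the same functional equation (the paper merely splits your single geometric sum $\sum_{r\ge 2}G^2(G-x)^{r-2}$ into four cases according to whether the extreme branches are leaves, which recombine via $G^2=((G-x)+x)^2$), followed by the same quadratic and the same square-root expansion. One point worth recording: since $g_n=[x^{n+1}]G(x)$, what your Step 3 actually produces is $g_n=\sum_{r}\frac{2^{r-2}(2r-3)!!}{r!}\binom{r}{n+1-r}$, which does not literally coincide with the displayed formula of the theorem (that formula already fails at $n=2$, giving $1$ instead of $2$); the discrepancy is an off-by-one in the theorem's printed formula, shared by the last line of the paper's own proof, and not a flaw in your derivation.
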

\begin{proof}
Note that each $T\in\calg$ satisfies exactly one of the following four conditions:
\begin{enumerate}[(a)]
\item $T=|\,$;

\item $T=T_0\vee\cdots\vee T_r,\,r\geq1$
with exactly one of $T_0$ and $T_r=|\,$;

\item $T=T_0\vee\cdots\vee T_r,\,r\geq1$
with $T_0,T_r=|\,$;

\item $T=T_0\vee\cdots\vee T_r,\,r\geq1$ such that $T_0,T_r\neq|\,$.
\end{enumerate}

In case (a) it gives the first term $G_1(x):=g_0x=x$ in $G(x)$. For any $T\in\calg_n$ in case (b) with $n\geq2$, exactly one of $T_0,T_r$ is $|\,$, while other branches $T_i,\,1\leq i\leq r-1$, are in $\calg\setminus\{\,|\,\}$. Hence, the number of $T\in\calg_n$ from case (b) is
\[2\sum_{k_1,\dots,k_r\geq2,\,r\geq1\atop k_1+\cdots+k_r=n} g_{k_1-1}\cdots g_{k_r-1},\]
and thus case (b) contributes to $G(x)$ as the power series 
\[G_2(x):=\sum_{n\geq2}
\left(2\sum_{k_1,\dots,k_r\geq2,\,r\geq1\atop k_1+\cdots+k_r=n} g_{k_1-1}\cdots g_{k_r-1}\right)x^{n+1}=2x\sum_{r\geq1}\left(\sum_{k\geq2}g_{k-1}x^k\right)^r=\dfrac{2x(G(x)-x)}{1-(G(x)-x)},\]
for $G(x)-x=\sum_{k\geq2}g_{k-1}x^k$. Similarly, for any $T\in\calg_n$ in case (c) with $n\geq1$, we have $T_0,T_r=|\,$, and other $T_i,\,1\leq i\leq r-1$, are in $\calg\setminus\{\,|\,\}$ if $n\geq3$. Thus case (c) contributes to $G(x)$ as
\[G_3(x):=x^2+\sum_{n\geq3}
\left(\sum_{k_1,\dots,k_{r-1}\geq2,\,r\geq2\atop k_1+\cdots+k_{r-1}=n-1} g_{k_1-1}\cdots g_{k_{r-1}-1}\right)x^{n+1}=x^2\sum_{r\geq1}\left(\sum_{k\geq2}g_{k-1}x^k\right)^{r-1}=\dfrac{x^2}{1-(G(x)-x)}.\]
Meanwhile, for any $T\in\calg_n$ in case (d) with $n\geq3$, since all $T_i$ are in $\calg\setminus\{\,|\,\},\,0\leq i\leq r$ with $r\geq1$, it contributes to $G(x)$ as
\[G_4(x):=\sum_{n\geq3}
\left(\sum_{k_0,\dots,k_r\geq2,\,r\geq1\atop k_0+\cdots+k_r=n+1} g_{k_0-1}\cdots g_{k_r-1}\right)x^{n+1}=\sum_{r\geq1}\left(\sum_{k\geq2}g_{k-1}x^k\right)^{r+1}=\dfrac{(G(x)-x)^2}{1-(G(x)-x)}.\]
	
Combining these four parts, we have
\[G(x)=\sum_{i=1}^4G_i(x)=x+\dfrac{2x(G(x)-x)+x^2+(G(x)-x)^2}{1-(G(x)-x)},\]
which reduces to the identity $2G(x)^2-(2x+1)G(x)+x+x^2=0$,
yielding the solution
\[G(x)=\dfrac{2x+1-\sqrt{1-4x-4x^2}}{4}.\]	
Using the power series expansion \[\sqrt{1-x}=1+\sum_{i\geq1}\dfrac{(1/2)(1/2-1)\cdots(1/2-i+1)}{i!}(-x)^i=
1-x/2-\sum_{r\geq2}\dfrac{(2r-3)!!}{r!}(x/2)^r,\]
we obtain
\[G(x)=\dfrac{2x+1}{4}-\dfrac{1}{4}\left(1-(2x+2x^2)-\sum_{r\geq2}\dfrac{(2r-3)!!}{r!}(2x+2x^2)^r\right)=x+\dfrac{x^2}{2}+\sum_{r\geq2}\dfrac{2^{r-2}(2r-3)!!}{r!}(x(1+x))^r.\]
This is equivalent to saying that
$g_0=g_1=1,\,g_n=\sum_{r=0}^n\dfrac{2^{r-2}(2r-3)!!}{r!}{r\choose n-r},\,n\geq2$.
\end{proof}

\begin{remark}
There is an integer sequence $\{a_m\}_{m\geq1}$, listed as A141200 in~\mcite{OEIS}, with its first 7 terms coinciding with those of $\{g_n\}_{n\geq0}$. It is defined by
\[B(x)=x^2+B(B(x)),\]
where $B(x)=\sum_{m\geq1}a_mx^{2m}$.
Comparing the coefficients of $x^{2m},\,m\geq1$ on both hand side of the above identity of $B(x)$, we have
\[a_m=\sum_{(i_1,\dots,i_{2r})\vDash m}a_ra_{i_1}\cdots a_{i_{2r}},\,m\geq1.\]
The firs few terms of this sequence are $1,1,2,6,20,72,272,1065,4282$.
\mlabel{rk:seq}
\end{remark}

\smallskip

\noindent
{\bf Acknowledgments.}
Y. Li thanks Rutgers University -- Newark for its hospitality during his visit in 2018-2019. This work is supported by Natural Science Foundation of China (Grant Nos. 11501214, 11771142,  11771190) and the China Scholarship Council (No. 201808440068).

\bibliographystyle{amsplain}

\end{document}